
\documentclass[12pt]{amsart}

\usepackage[UKenglish]{babel} 
\usepackage[T1]{fontenc}
\usepackage[ansinew]{inputenc}
\usepackage{graphicx} 
\usepackage{lmodern} 
\usepackage{enumerate}
\usepackage{amsmath}
\usepackage{amsthm}
\usepackage{amsfonts}
\usepackage{amssymb}
\usepackage{amscd}
\usepackage{xcolor}
\usepackage{cleveref}
\usepackage[numbers,sort]{natbib}
\usepackage[colorinlistoftodos]{todonotes}
\usepackage{mathtools}

\setlength{\marginparwidth}{2cm}

\newcommand{\newword}[1]{\emph{#1}}
\newcommand{\ldb}{\{\!\!\{}
\newcommand{\rdb}{\}\!\!\}}
\newcommand{\PP}{\mathbb{P}} 
\newcommand{\AAA}{\mathcal{A}}

\newcommand{\BB}{\mathcal{B}}
\newcommand{\CF}{\mathcal{CF}}
\newcommand{\DF}{\mathcal{DF}}

\newcommand{\FF}{\mathcal{F}}

\newcommand{\MM}{\mathcal{M}}

\newcommand{\RR}{\mathbb{R}}
\newcommand{\TT}{\mathbb{T}}

\newcommand{\cork}{\operatorname{cork}}
\newcommand{\relint}{\operatorname{relint}}
\newcommand{\rk}{\operatorname{rk}}
\newcommand{\cl}{\operatorname{cl}}
\newcommand{\stiefel}{\pi} 
\newcommand{\relsupp}{\operatorname{rs}} 
\newcommand{\dapx}{\mathcal{DA}} 
\newcommand{\dmat}{\mathcal{DM}} 
\newcommand{\cocl}{\operatorname{cocl}} 
\newcommand{\TP}{\mathbb{TP}^{n-1}} 
\newcommand{\pl}[2]{#1_{#2}} 
\newcommand{\f}[2]{#1_{#2}} 
\newcommand{\vertex}[2]{v^{#1}_{#2}} 
\newcommand{\Be}[1]{\mathcal{L}(#1)} 
\newcommand{\underm}[1]{\mathchoice%
{\underbracket[1pt][-1pt]{\displaystyle#1}}%
{\underbracket[1pt][-1pt]{\textstyle#1}}%
{\underbracket[1pt][-1pt]{\scriptstyle#1}}%
{\underbracket[1pt][-1pt]{\scriptscriptstyle#1}}} 
\newcommand{\present}{\mathcal{A}} 
\newcommand{\stableint}{\mathbin{\!\mathop\cap\limits_{\mathrm{stable}}\!}} 

\theoremstyle{plain}

\newtheorem{proposition}{Proposition}[section]
\newtheorem{theorem}[proposition]{Theorem}
\newtheorem{lemma}[proposition]{Lemma}
\newtheorem{corollary}[proposition]{Corollary}
\newtheorem{introTheorem}[proposition]{Theorem}

\theoremstyle{definition}
\newtheorem{definition}[proposition]{Definition}
\newtheorem{example}[proposition]{Example}

\theoremstyle{remark}
\newtheorem{remark}[proposition]{Remark}

\numberwithin{equation}{section}

\hyphenation{trans-ver-sal}

\renewcommand{\labelenumi}{\textup{(\arabic{enumi})}}

\begin{document}

\pagestyle{plain}

\title{Presentations of Transversal Valuated Matroids}
\author{Alex Fink\and Jorge Alberto Olarte}
\date{}

\begin{abstract}
Given $d$ row vectors of $n$ tropical numbers, $d<n$,
the tropical Stiefel map constructs a version of their row space,
whose Pl\"ucker coordinates are tropical determinants.
We explicitly describe the fibers of this map.
From the viewpoint of matroid theory, 
the tropical Stiefel map defines a generalization of transversal matroids in the valuated context,
and our results are the valuated generalizations of theorems of Brualdi and Dinolt, Mason and others
on the set of all set families that present a given transversal matroid.
We show that a connected valuated matroid is transversal if and only if all of its connected initial matroids are.
The duals of our results describe complete stable intersections of tropical linear spaces via valuated strict gammoids.
\end{abstract}

\maketitle

\section{Introduction}\label{sec:intro}
In tropical mathematics, the accepted definition of \newword{tropical linear spaces}
uses an analogue to vectors of Pl\"ucker coordinates. 
These vectors were introduced by Dress and Wenzel \cite{DressWenzel},
who named them \newword{valuated matroids} because matroids appear as a special case.

Over a field $\mathbb K$, every linear subspace of $\mathbb K^n$ can also be described as the rowspace of some matrix with entries in $\mathbb K$.
The tropical counterpart fails.
The \newword{tropical Stiefel map} $\stiefel$ of~\cite{FR} sends a matrix of tropical numbers to the tropical linear space determined by its vector of maximal minors;
however, not all tropical linear spaces arise in this way.


The combinatorics of the map $\stiefel$ is governed by \newword{transversal matroids}.
Let $\present=\ldb A_1,\ldots,A_d\rdb$ be a multiset of subsets of a finite set~$E$.
Edmonds and Fulkerson \cite{EdmondsFulkerson} observed that the set of subsets $J\subseteq E$
which form a \newword{transversal} of~$\present$,
i.e.\ such that there is an injection $f:J\to\{1,\ldots,d\}$ with $j\in A_{f(j)}$ for each $j\in J$,
are the independent sets of a matroid. 
A matroid $M$ arising in this way is called a transversal matroid,
and $\present$ is called a \newword{presentation} of~$M$.
To emphasize the commonality between valuated and unvaluated cases,
we define a \newword{transversal valuated matroid} $V$ to be a valuated matroid in the image of~$\stiefel$,
i.e.\ a vector of tropical maximal minors of a $d\times n$ matrix $A$ of tropical numbers.
The matroids that are transversal valuated matroids are exactly the transversal matroids.

Brualdi and Dinolt described all presentations of a given transversal matroid.
(Their original formulation \cite[Theorem 5.2.6]{BrualdiDinolt} is \Cref{BrualdiDinolt} below.)
Any transversal matroid $M$ has a unique maximal presentation, which consists of $\tau_M(F)$ copies of~$E\setminus F$
for each flat $F$ of~$M$,
where the number $\tau_M(F)$ is computed by a recurrence \eqref{eq:definition of tau} on the lattice of flats.
Every presentation $\ldb E\setminus F_1,\ldots,E\setminus F_d\rdb$ of~$M$ can be obtained from the maximal one by deleting relative coloops
in a way that doesn't contravene Hall's theorem, i.e.\ that satisfies
\begin{equation}\label{eq:Hall intro}
\cork(\bigcap_{i\in I} F_i) \le |I| 
\end{equation}
for every $I\subseteq\{1,\ldots,d\}$, where $\cork(J)=d-\rk(J)$ is the corank function.

Our main theorem is an explicit description of the fibers of~$\stiefel$.
\begin{introTheorem}[Synopsis of \Cref{thm_B}]\label{intro_thm_B}
Each nonempty fiber of the tropical Stiefel map $\stiefel$ is the orbit of a fan in the space of $d\times n$ tropical matrices under the action of~$S_d$ permuting the rows.
\end{introTheorem}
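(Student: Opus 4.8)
Fix a tropical Pl\"ucker vector $V$ in the image of $\stiefel$, with underlying (transversal) matroid $M$ on the $n$-element ground set $E$. The claim has two halves. First, $\stiefel^{-1}(V)$ is $S_d$-stable: the tropical determinant of the submatrix on a column set $S$, being an optimum over bijections $\{1,\ldots,d\}\to S$ of the associated matching-sums, is a symmetric function of the rows of the matrix, so $\stiefel(\rho\cdot A)=\stiefel(A)$ for every $\rho\in S_d$ and each fiber is a union of $S_d$-orbits. It therefore suffices to carve out of $\stiefel^{-1}(V)$ a sub-polyhedral-complex $\mathcal F$ --- by selecting one matrix from each $S_d$-orbit, say via a fixed total order on rows --- and to show that $\mathcal F$ is a fan; its $S_d$-orbit is then all of $\stiefel^{-1}(V)$. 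I would build this $\mathcal F$ explicitly from the combinatorics of $M$, which is the content I expect \Cref{thm_B} to supply.

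\textbf{A polyhedral structure on the fiber.}
I would first stratify $\stiefel^{-1}(V)$ by \emph{combinatorial type}: attach to a matrix $A$ in the fiber its support $\{(i,j):A_{ij}\neq\infty\}$ --- equivalently the presentation $\present(A)$ of $M$ that it carries --- together with, for each basis $S$ of $M$, the set of bijections $\{1,\ldots,d\}\to S$ that realise the tropical determinant of $A_S$. Once this type is fixed, the equations $\stiefel(A)=V$ unwind into a finite system: one linear equality per recorded optimal bijection, setting the corresponding matching-sum equal to $V_S$; strict linear inequalities forcing the remaining finitely-supported matchings above $V_S$; the prescribed $\infty$-entries; and nothing else, since a column set $S$ that is not a basis of $M$ automatically has tropical determinant $\infty=V_S$. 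Hence each stratum is a relatively open polyhedron, the strata are finite in number, and they fit together into a polyhedral complex structure on $\stiefel^{-1}(V)$, visibly stable under $S_d$. What remains is to recognise this complex as the $S_d$-orbit of a fan --- that is, to pin down its cones and their incidences.

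\textbf{The fan, and the main obstacle.}
For that I would establish the valuated counterpart of the Brualdi--Dinolt theorem (\Cref{BrualdiDinolt}): $V$ admits a \emph{maximal valuated presentation} $A^\circ$, carrying one row per flat $F$ of $M$ with multiplicity $\tau_M(F)$, whose row for $F$ has its finite entries in the columns of $E\setminus F$ bearing weights produced by a min-plus recurrence on the lattice of flats --- the valuated refinement of \eqref{eq:definition of tau} --- and every matrix in the fiber is got from $A^\circ$ by sending a family of ``relative coloop'' entries to $\infty$ and re-weighting the surviving finite ones, subject exactly to the valuated form of the Hall inequalities \eqref{eq:Hall intro}, now read as min-plus inequalities on the entries rather than corank bounds. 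Fixing an assignment of flats to rows rigidifies the $S_d$-symmetry and organises the admissible deletions and re-weightings, together with the common row-rescaling directions, into a fan whose cones are indexed by flags of flats decorated with the chosen deletions; its $S_d$-orbit is $\stiefel^{-1}(V)$. The reduction above and the $S_d$-equivariance are routine; the substance, and the hard part, is the valuated Brualdi--Dinolt theorem itself --- isolating $A^\circ$ and the recurrence computing its weights (the classical recurrence tracks only which elements lie in which set, whereas here the weights must be propagated as well), determining which families of entries may be deleted together, and verifying that the resulting closure relations among strata are exactly those of a fan with the claimed lineality. Reconciling the piecewise-linear arithmetic of valuated presentations with the conical incidence pattern of a fan is where I expect the real work to lie.
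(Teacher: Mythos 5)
Your opening reduction and the $S_d$-equivariance observation are fine, as is the observation that each combinatorial type cuts out a relatively open polyhedron, so the fiber is a finite polyhedral complex. But the substance of the theorem is, as you yourself say, the valuated Brualdi--Dinolt result --- and the form you conjecture for it is not the right one, which is a genuine gap.

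You propose a maximal valuated presentation $A^\circ$ with ``one row per flat $F$ of $M=\underm V$ with multiplicity $\tau_M(F)$'' and weights produced by a min-plus recurrence on $\FF(\underm V)$, with all other presentations obtained by deleting entries and re-weighting subject to a min-plus Hall condition. The paper's structure is quite different. The apex of the fan is the \emph{distinguished multiset of apices} $\dapx(\Be V)$, whose entries are points of the tropical linear space $\Be V$ indexed not by flats of $\underm V$ but by connected \emph{initial matroids} $M$ appearing in the regular matroid subdivision of $P_V$ induced by $V$ (together with certain contractions of $V$ by cyclic flats), with multiplicity $\tau_M(\emptyset)$ for each such $M$. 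Even counting the number of these apices correctly --- proving $|\dmat(V)|=d$ --- already requires a non-trivial argument via matroid valuations (\Cref{lemma:cXr}, \Cref{lemma_val}). Then, around each apex $v^L_M$, the admissible positions for the corresponding rows form a cone $\phi_L(M)$ whose rays are governed by the presentation combinatorics of the \emph{local} unvaluated matroid $M$ (via \Cref{prop_A}, a flat-and-cyclic-flat corank condition taken at each vertex of $L$); there is no single global recurrence on $\FF(\underm V)$ from which the weights of $A^\circ$ or the shape of the fan could be read off. This ``locality'' of transversality --- that the fiber is assembled from data attached to the vertices of $\Be V$, i.e.\ to the facets of the dual subdivision --- is exactly what makes \Cref{coro_transv} fall out of the same argument, and it is the ingredient your proposal does not have.

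You also do not touch the two technical devices that carry the proof of the hard direction: the reduction from a tuple in $\Pi(\Be V)$ failing to be a presentation to a ``local'' failure at a single distinguished flat (via Ingleton--Mason's characterization and the pseudopresentation lemma, \Cref{lemma_local_fail}), and the termination argument for the ``ascendent path'' recursion that propagates such a failure to a contradiction (\Cref{lemma:terminate}). Without something playing this role, exhibiting the stratification you describe as a fan, rather than an arbitrary polyhedral complex, is not a routine verification. Your Paragraph~2 stratification is a reasonable alternative way to see the polyhedral structure, but the closure/incidence analysis that would turn it into a fan is exactly where the hard, and here unsupplied, work would be.
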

This directly generalizes Brualdi and Dinolt's result to valuated matroids.
For (unvaluated) matroids in the image of~$\stiefel$,
the apex of our fan is the unique maximal presentation of Brualdi and Dinolt.
Apart from a lineality space, 
all rays of our fan are in coordinate directions, 
and the sets of coordinates that appear are described by a ``local'' reformulation of equation~\eqref{eq:Hall intro}.

In \cite{FR} a necessary condition for a valuated matroid $V$ to be transversal was given (\Cref{prop:FR local}).
Assuming for convenience that $V$ is connected,
the condition is that if $V$ is transversal, all connected initial matroids of~$V$ must be transversal.
The initial matroids are those whose matroid polytopes appear in the polytope subdivision induced by~$V$.
We obtain a converse.
\begin{introTheorem}[= \Cref{coro_transv}]
A connected valuated matroid is transversal if and only if all of its connected initial matroids are transversal.
\end{introTheorem}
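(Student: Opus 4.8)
The forward implication is exactly \Cref{prop:FR local}, so all the content lies in the converse: assuming $V$ is connected and every connected initial matroid of $V$ is transversal, I would show that $V$ is in the image of $\stiefel$. The strategy is to feed $V$ into the explicit fibre description of \Cref{thm_B}, which attaches to $V$ a candidate ``maximal presentation'' matrix $A$, and then to verify $\stiefel(A)=V$ by checking the local conditions that \Cref{thm_B} requires for this.

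First, the underlying matroid $M$ of $V$ is a connected initial matroid of $V$ (take the zero weight), hence is a connected transversal matroid; in particular the Brualdi--Dinolt maximal presentation of $M$ (\Cref{BrualdiDinolt}) exists, and the matrix $A$ furnished by \Cref{thm_B} is built on its $\infty$-pattern with finite entries prescribed by the valued data of~$V$. The part of \Cref{thm_B} I would invoke is that $\stiefel(A)=V$ exactly when a local form of the Hall inequality \eqref{eq:Hall intro} holds at each maximal cell of the regular subdivision $\Delta$ of the matroid polytope $P(M)$ induced by~$V$. So everything reduces to verifying those local conditions.

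Each maximal cell of $\Delta$ is the matroid polytope of an initial matroid $N=\mathrm{in}_w V$ for generic~$w$; writing $N$ as a direct sum of connected matroids, each summand is a connected initial matroid of~$V$ and so transversal by hypothesis. Since $\stiefel$ intertwines the formation of initial matroids --- the mechanism behind \Cref{prop:FR local} --- the local condition that \Cref{thm_B} imposes at the cell $P(N)$ ought to be, summand by summand, precisely the Brualdi--Dinolt condition \eqref{eq:Hall intro} (via the recurrence \eqref{eq:definition of tau}) that each summand of $N$ be transversal, which it is. Running this over all maximal cells of $\Delta$ would give $\stiefel(A)=V$, so $V$ is transversal.

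The main obstacle is to make precise the comparison in the previous paragraph: that the local conditions of \Cref{thm_B} at a cell $P(N)$ are equivalent to transversality of the summands of~$N$, and, relatedly, that the Brualdi--Dinolt maximal presentations of the cell-matroids can be matched against the one fixed maximal presentation of~$M$ on which $A$ is built. This is delicate because passing from $M$ to an initial matroid can coarsen the lattice of flats, merge or delete flats, and change which relative coloops are deletable, so neither the recurrence \eqref{eq:definition of tau} nor the inequality \eqref{eq:Hall intro} is obviously stable under $w\mapsto\mathrm{in}_w$. Establishing this stability --- that an initial matroid of a transversal valuated matroid inherits a presentation compatible with the maximal one, with the deletions bookkept by the local conditions --- is where I expect the real work to be; granting it, \Cref{coro_transv} follows formally from \Cref{thm_B} together with \Cref{BrualdiDinolt}.
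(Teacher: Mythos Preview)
Your overall strategy---construct a candidate presentation from the distinguished apices and then verify it is one via local conditions---is exactly the paper's route, and the ``main obstacle'' you flag in your last paragraph is indeed where all the work lies. But two specific points in your write-up are wrong or circular and should be fixed.

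First, the claim that the underlying matroid $M=\underm V$ is a connected initial matroid of $V$ (``take the zero weight'') is false. For a nontrivial valuation the initial matroid $V^0$ has as bases only those $B$ minimising $V_B$, which is a proper subset of $\BB(M)$; in general $M$ is not $V^x$ for any $x$. Fortunately you do not need $M$ to be transversal: the distinguished apices $\dapx(L)$ are defined, and shown to have the right cardinality $d$, under the sole hypothesis that $V$ has transversal facets (Section~6.1). So drop this step entirely.

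Second, invoking \Cref{thm_B} as stated is circular, since its hypothesis is that $V$ be transversal. What you actually want is \Cref{thm_apices}, the harder direction of \Cref{thm_B}, which is proved under the weaker hypothesis ``$V$ has transversal facets'' and concludes that any tuple in $\Pi(L)$---in particular the apices themselves---is a presentation. That is the statement from which \Cref{coro_transv} follows immediately.

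As for the obstacle you correctly isolate: the paper does \emph{not} resolve it by directly matching Brualdi--Dinolt data of the initial matroids against a fixed presentation of~$M$. Instead it first shows (\Cref{prop_pseudo}) that at every coloop-free face the apices give a \emph{pseudopresentation}, then uses \Cref{lemma_local_fail} to localise any failure of the Hall condition to a single distinguished flat~$F$, and finally runs an ``ascendent path'' argument (Section~6.3, \Cref{lemma:terminate}) that pushes such a failure from face to face along the direction $e_F$ until it is forced to land at a boundary or an apex where the definition of~$\Pi(L)$ rules it out. This termination mechanism, and \Cref{lemma_local_fail}, are the genuine new ingredients replacing the ``stability under $w\mapsto\mathrm{in}_w$'' you were hoping for.
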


Duality of valuated matroids
replaces the tropical Stiefel map by the process of taking the \newword{stable intersection} of a collection of tropical hyperplanes.
In the realm of matroids, the dual of the class of transversal matroids is the class of \newword{strict gammoids}.
This class arises from flows in directed graphs, which admit a natural generalization to the realm of valuated matroids which we call \newword{valuated strict gammoids}.
We find the statements derived from \Cref{coro_transv} by this duality to be of interest in their own right.
\begin{introTheorem}[= \Cref{thm:gammoids}]
Let $V$ be a valuated matroid and $L$ its corresponding tropical linear space. Then the following are equivalent:
\begin{enumerate}
	\item $L$ is the stable intersection of tropical hyperplanes.
	\item $V$ is a valuated strict gammoid.
	\item Near each point, $L$ is locally the Bergman fan of a strict gammoid.
\end{enumerate}
Furthermore, \Cref{thm_B} explicitly describes the spaces of all $d$-tuples of tropical hyperplanes whose stable intersection is a given tropical linear space,
and of all weighted directed graphs that present a given valuated strict gammoid.
\end{introTheorem}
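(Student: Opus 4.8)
The plan is to reduce everything to \Cref{coro_transv} and \Cref{thm_B} by passing to dual valuated matroids. Write $n=|E|$ and $d=n-\rk V$, so that $V^*$ has rank $d$. Two ingredients need to be in place beforehand. The first is a \emph{tropical Ingleton--Piff correspondence}: a valuated matroid of corank $d$ on $n$ elements is a valuated strict gammoid exactly when its dual is a transversal valuated matroid, i.e.\ equals $\stiefel(A)$ for a $d\times n$ tropical matrix $A$ with rows $a_1,\dots,a_d$; moreover this should come with an explicit dictionary between the presenting objects on the two sides, weighted digraphs on one and tropical matrices on the other. The second is the identity $L(\stiefel(A)^*)=H_{a_1}\stableint\cdots\stableint H_{a_d}$, where $H_{a_i}$ is the tropical hyperplane cut out by the $i$-th row of $A$; this follows from the standard description of stable intersections of tropical hyperplanes.

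Granting these, the three conditions coincide. By the second ingredient, $L$ is a stable intersection of tropical hyperplanes if and only if $V=\stiefel(A)^*$ for some $d\times n$ matrix $A$, that is, if and only if $V^*$ is transversal; by the first ingredient this happens exactly when $V$ is a valuated strict gammoid, so (1)$\Leftrightarrow$(2). For (2)$\Leftrightarrow$(3) I would use two facts: the local structure of $L$ at a point $p$ is the Bergman fan of the initial matroid $\mathrm{in}_p(V)$, and initial matroids commute with duality up to the sign of the weight, $(\mathrm{in}_w V)^*=\mathrm{in}_{-w}(V^*)$. Since transversal matroids and strict gammoids form dual classes, each closed under direct sums and decided component by component, statement (3) --- that every $\mathrm{in}_p(V)$ is a strict gammoid --- is equivalent to every connected initial matroid of $V^*$ being transversal. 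Reducing to the connected case by decomposing $V$ into connected summands (conditions (1)--(3) and all the relevant matroid classes behave well under direct sums), \Cref{coro_transv} applied to $V^*$ turns this into ``$V^*$ is transversal'', which is (2) again.

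For the ``furthermore'' clause I would carry \Cref{thm_B} across the same two correspondences. \Cref{thm_B} presents the fiber $\stiefel^{-1}(V^*)$ as the $S_d$-orbit of an explicit fan of $d\times n$ matrices, whose lineality contains the $d$ row-rescaling directions; by the second ingredient, an element of this fiber is the same datum as an ordered $d$-tuple of tropical hyperplanes whose stable intersection is $L$ (up to the row rescalings, which do not change the hyperplanes), so the same fan describes those tuples. By the dictionary of the first ingredient, a matrix in the fiber corresponds to a weighted digraph presenting $V$, and this correspondence is piecewise linear in the weights, so \Cref{thm_B} likewise describes the space of all weighted digraphs presenting the valuated strict gammoid $V$.

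The genuinely new step, and the one I expect to be the main obstacle, is the tropical Ingleton--Piff correspondence of the first ingredient. The classical argument converts a presentation $\ldb A_1,\dots,A_d\rdb$ of a transversal matroid into a directed graph in which vertex-disjoint path systems realize the partial transversals; I would run this construction tropically, replacing the bijective Lindström--Gessel--Viennot lemma by its min-plus version, so that the tropical maximal minors of the path matrix of a weighted digraph compute the minimum weights of the associated vertex-disjoint path systems --- which is precisely the flow definition of the dual valuated strict gammoid. The delicate part will be to make this natural in the edge weights, matching fans of tropical matrices with fans of weightings as the ``furthermore'' clause needs, and to dispose cleanly of the degenerate cases.
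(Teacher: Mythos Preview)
Your proposal is correct and follows essentially the same route as the paper: the paper derives \Cref{thm:gammoids} as an immediate corollary of \Cref{coro_transv} together with the duality between stable sum and stable intersection and the tropical Ingleton--Piff correspondence, the latter being proved separately as \Cref{prop:dual} with an argument (matchings in the matrix $A$ correspond to linkings plus cycles, and the no-negative-cycle condition lets one strip the cycles) that is exactly the min-plus LGV computation you sketch. Your ``second ingredient'' is the paper's observation that $\stiefel(A)$ is the stable sum of its rows (\Cref{rem:stable sum}), dualized; so the only difference is organizational, and your concern that the Ingleton--Piff step is the main obstacle is well placed---it is the one substantive lemma, and the paper isolates it as \Cref{prop:dual}.
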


In this paper, \Cref{sec:background} reviews valuated matroids and tropical linear spaces.
\Cref{sec:transversality} introduces transversality and the Stiefel map, and interprets the former as the $\{0,\infty\}$-valued case of the latter.
We begin to characterize presentations in \Cref{sec:regions},
by bounds on the number of rows chosen from certain regions of the tropical linear space.
\Cref{sec:matroid valuations} introduces a piece of technical apparatus needed for the proofs of the main theorems,
after which \Cref{sec:presentations} proves them.
\Cref{sec:stable} introduces strict gammoids and stable intersection and reframes our results in this language.

\subsection*{Acknowledgments}
During this work the first author received support from
the Deutsche For\-schungs\-ge\-mein\-schaft project ``Facetten der Komplexit\"at''
and from the European Union's Horizon 2020 research and innovation programme under grant agreement No~792432.
The first author also thanks the Mittag-Leffler Institute for their hospitality and delightful working conditions.
The second author was supported by the Einstein Foundation Berlin through the visiting fellowship of Francisco Santos.
We thank Michael Joswig, Georg Loho, and a referee for valuable feedback.

\section{Valuated matroids and tropical linear spaces}
\label{sec:background}
This section is a review of standard concepts to set up the terminology and notation;
it contains no new material.
Our work's main characters are tropical linear spaces,
or to give them another of their cryptomorphic names, valuated matroids \cite{DressWenzel}.
We recommend \cite[chap.~4]{MaclaganSturmfels} as a more detailed reference for tropical linear spaces and valuated matroids.
For (unvaluated) matroids, any standard textbook will suffice.

Fix a set $[n]=\{1,\ldots,n\}$.
We denote the set of all subsets of~$[n]$ with cardinality $d$ by $\binom{[n]}d$.
Given a subset $J\subseteq[n]$, we denote its zero-one indicator vector by
\[e_J = \sum_{j\in J}e_j\in\RR^n.\]

We distinguish multisets from sets by writing them with doubled braces, like $\ldb0,0,1\rdb$.

In the theory of valuated matroids, coordinates are drawn from
the semiring $\TT=\RR\cup\{\infty\}$ of \newword{tropical numbers}, with operations $\oplus:= \min$ and $\odot := +$ and identity elements $\infty$ and $0$.
The set $\TT^n$ of vectors of $n$ tropical numbers plays the role of affine $n$-space in tropical geometry.
But we prefer to work in projective space:
\[\TP = \big(\TT^n\setminus\{(\infty,\ldots,\infty)\}\big)\big/\RR(1,\ldots,1)\]
where the action of $\RR(1,\ldots,1)$ is by addition.
When we speak of the relative interior $\relint(P)$ of a polyhedron $P\subseteq\TP$,
we exclude the points which have more coordinates equal to $\infty$ than a generic point of~$P$ does,
i.e.\ the points on the ``faces at infinity'' of~$P$.

\subsection{Valuated matroids and matroid polytopes}\label{ssec:valuated}
A \newword{valuated matroid} $V$ on the ground set $[n]$,
whose \newword{rank} is an integer $\rk(V)=d$ with $0\leq d\leq n$,
is a vector in $\TT\PP^{\binom{n}{d} -1}$ whose coordinates are labeled by $\binom{[n]}d$
satisfying the tropical Pl\"ucker relations:
for any sets $A\in\binom{[n]}{d-1}$ and $C\in\binom{[n]}{d+1}$,
there is more than one index $j\in C\setminus A$ at which
$\pl V{A\cup\{j\}} + \pl V{C\setminus\{j\}}$
attains its minimal value.

Given a valuated matroid $V$, the set of all $B\in\binom{n}{d}$ such that $\pl VB$ is finite is the set of \emph{bases} of a \emph{matroid}, called the matroid \emph{underlying} $V$. 
Following the notation used in \cite{BakerBowler}, we write $\underm V$ for the matroid underlying $V$.
For a matroid $M$ we write $\BB(M)$ for the set of bases of $M$.
In this work we often look at matroids (cryptomorphically)
as the special case of valuated matroids that only have $0$ and $\infty$ coordinates:
that is, $\pl MB=0$ if $B\in\BB(M)$ and $\pl MB=\infty$ otherwise.

For a subset of $J\subseteq [n]$ we write $\rk_M(J)$ for the \newword{rank} of $J$ in $M$,
$\cl_M(J)$ for its \newword{closure},
$M|J$ for the \newword{restriction} of $M$ to $J$, 
$M/J$ for the \newword{contraction} of $J$ in $M$, and
$M\setminus J$ for the \newword{deletion} of $J$ in $M$.
We write $M^*$ for the \emph{dual} of $M$,
$\FF(M)$ for the \newword{lattice of flats} of $M$, 
and $\CF(M)$ for the \newword{lattice of cyclic flats},
i.e.\ $F\in\CF(M)$ if and only if $F\in \FF(M)$ and $[n]\setminus F\in \FF(M^*)$.
A \newword{cyclic set} of~$M$ is the complement of a flat of~$M^*$, equivalently a union of zero or more circuits of~$M$.
The \newword{coclosure} of $J\subseteq[n]$ is the largest cyclic set contained in $J$, 
in other words, $\cocl_M(J) := [n]\setminus\cl_{M^*}(J)$.
The \newword{corank} of $J$ is $\cork(J) = d-\rk(J)$.
We write $M_1\oplus M_2$ for the \newword{direct sum} of $M_1$ and $M_2$.


The \newword{matroid polytope} of~$M$ is 
\[P_M := \operatorname{conv}\{e_B:B\in\BB(M)\}\subseteq\RR^n.\]
The dimension of $P_M$ is equal to $d$ minus the number of connected components of $M$. 
For any $F\in \FF(M)$ the intersection of $P_M$ with the hyperplane $\left\{\sum\limits_{j\in F}x_j = \rk(F)\right\}$ is a face of $P_M$ and it is the polytope of the matroid $M|F\oplus M/F$. 
Any facet of $P_M$ which intersects the interior of $\Delta(d,n)$ is of this form for a cyclic flat $F\in \CF(M)$, and all the other facets are also of this form for some singleton $F$.


A valuated matroid $V$ with underlying matroid $M$ can be regarded as a height function on the vertices of the polytope $P_M$.
Such a height function produces a regular subdivision of~$P_M$ in the sense of \cite[Definition 2.2.10]{dLRS}. 
A real-valued function from the vertices of $P_M$ is a matroid subdivision if and only if all the faces of the induced regular subdivision are matroid polytopes \cite[Proposition 2.2]{Speyer}.
A vector $x\in \RR^n$ selects a face of the regular subdivision induced by~$V$ by taking the convex hull of all vertices $e_B$ of $P_B$ such that $V_B - \sum_{i\in B}x_i$ is minimized.
Such a face corresponds to the polytope of a matroid which we write $V^x$ known as the \newword{initial matroid} of $V$ at~$x$.
We write $\MM(V)$ for the set of all initial matroids of~$V$ all of whose loops are loops in~$V$. 

\begin{example}
\label{ex1}
Consider the uniform matroid $U_{2,4}$. Its matroid polytope is the hypersimplex $\Delta_{2,4}$ which is an octahedron. 
Now consider the valuated matroid $V$ where $V_{34}= 1$ and $V_B = 0$ for every $B\in {4\choose 2}\setminus \{34\}$. 
The matroid subdivision induced by $V$ divides the octahedron into two square pyramids, one with apex $e_{12}$ and the other one with apex $e_{34}$. The only $x$ that selects the pyramid with apex $e_{12}$ is $[0:0:0:0]$ while the only $x$ that selects the pyramid with apex $e_{34}$ is $[0:0:1:1]$. 
The initial matroids contained in $\MM(V)$ are those whose polytopes are
the two square pyramids, their common square face,
and four of the triangular faces, 
namely $\operatorname{conv}\{e_{12},e_{13},e_{14}\}$ and its $S_4$-images.
\end{example}

\subsection{Tropical linear spaces}

The (projective) \emph{tropical linear space} associated to a valuated matroid $V$ is
\begin{multline*}
\Be V := \{x=(x_1:\dots:x_n)\in\TP : \mbox{for any $C\in\binom{[n]}{d+1}$,}\\
\mbox{more than one $j\in C$ minimizes $x_j+\pl V{C\setminus\{j\}}$}.\}
\end{multline*}
We call $\Be V$ a \emph{tropical hyperplane} if $V$ has rank $d=n-1$.

We describe the polyhedral structure of a tropical linear space $L = \Be V$ using the language of matroids.
For simplicity, we assume throughout that $\underm V$ has no loops or coloops.
Define
\[
L^\circ := \{x\in\RR^n : 
\mbox{$V^x$ has no loops}\}.
\]
We have that $L$ is the closure of $L^\circ /\RR(1,\ldots,1)$ within $\TP$,
where the closure operation only adds points with infinite coordinates
(\cite[Prop~2.3]{Speyer}; implicit in \cite{Kapranov1992}).
The complex $L$ is pure of dimension~$d-1$.
The polyhedral complex structure of~$L$ is determined by the faces in~$L^\circ$:
the interiors of these faces are the sets of points $x\in\RR^n$
such that the matroid $V^x$ is constant.
For a matroid $M\in \MM(V)$, we write $\f LM$ for its corresponding cell, that is:
\[
\f LM := \overline{\iota_J(\{x\in L^\circ : (V|J)^x= M|J\})},
\]
where $J$ is the set of all nonloops of~$M$
and $\iota_J:\RR^J\to\TT^n$ is the inclusion filling in infinities in the missing coordinates.
When this cell is 0-dimensional, i.e. when $M$ is connected, we call it $\vertex LM$
(pedantically, $\vertex LM$ is the point which is the single element of~$\f LM$). 

\begin{example}
\label{ex1.2}
Consider the valuated matroid $V$ from~\Cref{ex1}. 
The polytopes in the subdivision induced by $V$ that correspond to loopless matroids are the two square pyramids, the square separating the pyramids and the four triangles which are inside each of the hyperplanes $x_i = 1$ for $i\in[4]$. \Cref{fig_ex1} shows a picture of the associated linear space.
\end{example}

\medskip

If $M$ is a matroid, the polyhedral complex structure we have just placed on the tropical linear space $\Be M$ 
is the Bergman fan as in \cite{FS05}, with the `coarse subdivision' as in \cite{AK06}.

We will use a construction of the set $\Be M$ in terms of flats throughout. 

\begin{proposition}[\cite{MaclaganSturmfels}, Theorem 4.2.6]\label{prop:fine fan}
Let $M$ be a matroid with no loops.  Then
\[\Be M^\circ = \left\{\lambda e_{[n]}+\sum_{i=1}^s a_{F_i}e_{F_i} :
\lambda\in\RR, a_{F_i}\geq0, F_1\subset\cdots\subset F_s\in\FF(M)\right\}.\]
\end{proposition}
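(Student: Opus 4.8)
The plan is to argue directly from the definition $\Be M^\circ=\{x\in\RR^n: M^x\text{ has no loops}\}$, where $M^x$ is the initial matroid of~$M$ at~$x$, i.e.\ the matroid on~$[n]$ whose bases are exactly the bases $B$ of~$M$ maximizing $\sum_{i\in B}x_i$; in particular $M^x$ has no loops iff every element of~$[n]$ lies in some such $x$-maximal basis. Given $x\in\RR^n$, let $c_1>\cdots>c_k$ be its distinct coordinate values, put $E_j=\{i\in[n]:x_i=c_j\}$, and let $S_j=E_1\cup\cdots\cup E_j$, so that $\emptyset=S_0\subsetneq S_1\subsetneq\cdots\subsetneq S_k=[n]$. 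The workhorse, which I would invoke as a black box, is the greedy-algorithm description of $x$-maximal bases: a basis $B$ of~$M$ is $x$-maximal iff $\rk_M(B\cap S_j)=\rk_M(S_j)$ for every~$j$, equivalently $M^x=\bigoplus_{j=1}^k(M|S_j)/S_{j-1}$ with the $j$-th summand on ground set~$E_j$.

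The one substantial step is the lemma: \emph{$M^x$ has no loops iff each of $S_1,\ldots,S_{k-1}$ is a flat of~$M$.} For the ``if'' direction, fix $\ell\in E_j$ and run greedy through the coordinates in order of weakly decreasing $x$-value, taking $\ell$ first among the elements of~$E_j$; after all of $S_{j-1}$ has been processed the chosen independent set~$I$ is a basis of $M|S_{j-1}$, so $\cl_M(I)=S_{j-1}$ because $S_{j-1}$ is a flat ($S_0=\emptyset$), whence $\ell\notin\cl_M(I)$ and greedy adds~$\ell$; thus $\ell$ lies in an $x$-maximal basis, so $M^x$ is loopless. For the ``only if'' direction, suppose some $S_j$ with $j<k$ is not a flat and choose $\ell\in\cl_M(S_j)\setminus S_j$, so $\ell\in E_{j'}$ with $j'>j$; if $M^x$ were loopless then $\ell$ would lie in some $x$-maximal basis~$B$, but then $\rk_M(B\cap S_j)=\rk_M(S_j)$ forces $B\cap S_j$ to span $\cl_M(S_j)$, so $\ell\in\cl_M(B\cap S_j)\subseteq\cl_M(B\setminus\{\ell\})$ (as $\ell\notin S_j$), contradicting the independence of~$B$.

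Granting the lemma, both inclusions follow by telescoping. If $x\in\Be M^\circ$, then from $x=\sum_{j=1}^k c_j e_{E_j}$ and $e_{E_j}=e_{S_j}-e_{S_{j-1}}$ one gets $x=c_k e_{[n]}+\sum_{j=1}^{k-1}(c_j-c_{j+1})e_{S_j}$, and here $c_j-c_{j+1}>0$, while $S_1\subsetneq\cdots\subsetneq S_{k-1}$ is a chain of flats by the lemma; this is the required form with $\lambda=c_k$. Conversely, given $x=\lambda e_{[n]}+\sum_{i=1}^s a_{F_i}e_{F_i}$ with $F_1\subsetneq\cdots\subsetneq F_s$ flats and $a_{F_i}\ge 0$, discard the terms with $a_{F_i}=0$ as well as any term with $F_i=\emptyset$; then $x_\ell=\lambda+\sum_{i:\,\ell\in F_i}a_{F_i}$, which, the $F_i$ being nested, is constant on each $F_i\setminus F_{i-1}$ and strictly decreasing in~$i$, so the threshold sets of~$x$ are precisely $F_1\subsetneq\cdots\subsetneq F_s\subsetneq[n]$, with $F_s$ omitted from this list if $F_s=[n]$. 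All members of this list except~$[n]$ are flats, so the lemma yields $M^x$ loopless, i.e.\ $x\in\Be M^\circ$.

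I expect the lemma to be the only real obstacle, and within it the ``only if'' direction --- that a threshold set which fails to be a flat forces a loop in~$M^x$; everything else reduces to the greedy-algorithm characterization of $x$-maximal bases (used without proof) together with the telescoping identity. Some minor care is needed with the degenerate cases $F_1=\emptyset$ and $F_s=[n]$, but these only contribute redundant summands and do not affect the argument.
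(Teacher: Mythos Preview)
Your argument is correct. The paper does not supply its own proof of this proposition: it is simply quoted from \cite{MaclaganSturmfels}, so there is no in-paper argument to compare against. Your approach---characterizing looplessness of $M^x$ by the condition that all the threshold sets $S_1,\ldots,S_{k-1}$ are flats, via the greedy description of $x$-maximal bases, and then telescoping---is the standard one and matches what one finds in the cited reference.
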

The above shows that, as a set, the Bergman fan is the order complex of the lattice of flats, which endows the Bergman fan with its `fine subdivision' structure, 
also known as the nested set complex of~$M$.

If $L= \Be V$ is a tropical linear space and $x\in\RR^n/\RR(1,\ldots,1)$ is in the relative interior of~$\f LM$, then $\Be M$ equals the set of vectors $y$ such that $x+\varepsilon y\in L$ for all sufficiently small $\varepsilon>0$.
That is, $L$ looks like the translation $\Be M+x$ locally near~$x$.

\medskip

Valuated matroids have analogs of dual, restriction and contraction.
The \newword{dual} of $V$ is the valuated matroid $V^*$ of rank $n-d$ given by $V^*_B := V_{[n]\setminus B}$. Notice that $(V^*)^* = V$.
Let $J$ be an arbitrary subset of~$[n]$ and $B_{\rm c}$ any basis of $M/J$. 
Then the \newword{restriction} of $V$ to~$J$ is the valuated matroid $V|J$ on the ground set $J$ of~rank $k=d-|B_{\rm c}|$
such that $V|J_B = V_{B\cup B_{\rm c}}$ for any $B\in {J\choose k}$. 
This definition does not depend on the choice of $B_{\rm c}\in \BB(M/J)$, as choosing a different basis means tropically scaling all Pl\"ucker coordinates by the same factor.
In particular $\underm V| J = \underm{V|J}$. 
The \newword{contraction} of $J$ in~$V$ can be defined as $V/J := (V^*|([n]\setminus J))^*$.

Lemma~4.1.11 of~\cite{Frenk} describes the effects of deletion and contraction on $\Be{V}$.
Given a subset $A\subseteq [n]$ we have that 
\[
\Be{V/A} = \{x\in \TT\PP^{|[n]\setminus A|-1}: \hat x \in L\}
\] 
where $\hat x \in \TP$ is the extension of $x$ by setting the coordinates indexed by $A$ to be $\infty$. 
Let $\TP_A := \{x\in \TP : \exists i\in A \enspace x_i \ne \infty\}$ and let $\pi_A : \TP_A \to \TT\PP^{|A|-1}$ be the projection of $x$ to the coordinates indexed by $A$. 
Then
\[
\Be{V|A} = \pi_A(L\cap \TP_A).
\]  

\section{Transversality}\label{sec:transversality}
We recommend \cite{BrualdiWhite} as a general reference for transversal matroids.

\subsection{The tropical Stiefel map}
The fibers of the following map $\stiefel$ are our main subject.
\begin{definition}[\cite{FR}]
Let $A\in\TT^{d\times n}$ be a tropical matrix.
The \newword{tropical Stiefel map} is the partial function $\stiefel$
assigning to $A\in\TT^{d\times n}$ 
the valuated matroid $\pi(A)\in\TT\PP^{\binom{n}{d} -1}$ \cite[Example 5.2.3]{Murota} defined by
\[\pl{\stiefel(A)}B = \min\left\{\sum_{i=1}^d A_{i,j_i} : \{j_1,\ldots,j_d\}=B\right\}.\]
\end{definition}

The minimum on the right hand side of this equation,
over the $d!$ allocations of the names $j_1,\dots,j_d$ to the elements of~$B$,
is a \newword{tropical maximal minor} of~$A$.
The history of the connection between transversals and determinants goes back at least to~\cite{Edmonds1967}.

\begin{remark}\label{rem:domain}
The domain of~$\stiefel$ is the subset of $\TT^{d\times n}$ where at least one injective function $j:[d]\to[n]$
achieves $A_{i,j(i)}\neq\infty$ for all~$i\in[d]$.
By Hall's theorem, the only matrices excluded from the domain are those that have a $k\times(n+1-k)$ submatrix 
all of whose entries are~$\infty$
for some $1\leq k\leq d$.
\end{remark}

\begin{figure}[hbt]
	\centering
		\includegraphics[scale = 0.3125]{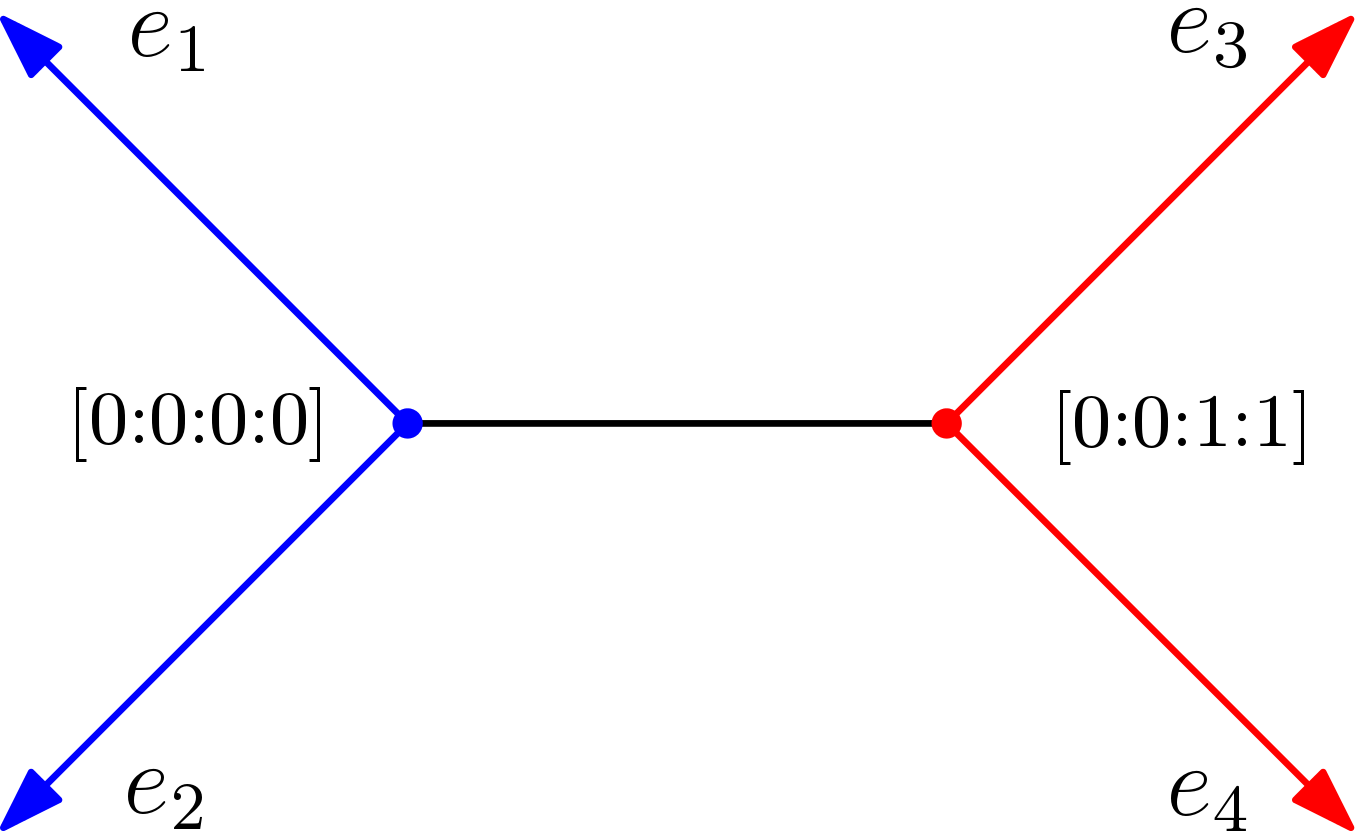}
	\caption{The tropical linear space $\Be{\stiefel(A)}\subseteq\mathbb{TP}^3$ of \Cref{ex:1}.}
	\label{fig_ex1}
\end{figure}

\begin{example}
\label{ex:1}
Consider the matrix
\[
A = 
\begin{pmatrix}
0 & 0 & 0 & 0\\
0 & 0 & 1 & 1
\end{pmatrix}
\]
in $\TT^{2\times 4}$.
Computing the tropical minors gives $\pl{\stiefel(A)}B = 0$ for any $B\in {[4]\choose 2}\setminus \{3,4\}$ and $\pl{\stiefel(A)}{34}= 1$, which is the same valuated matroid as in \Cref{ex1,ex1.2}. 
Notice that replacing either $A_{1,1}$ or $A_{1,2}$ (but not both at the same time) by any tropical number larger than~0 does not change any of the minors, so the resulting matrix would be mapped to the same valuated matroid.
Similarly, replacing either $A_{2,3}$ or $A_{2,4}$ by a number larger than~$1$ also does not change $\stiefel(A)$. 
\Cref{fig_ex1} shows the tropical linear space of~$\stiefel(A)$. 
Any matrix $A'$ with $\stiefel(A')= \stiefel(A)$ must have one row giving projective coordinates for a point in the blue subcomplex of the figure, 
and the other row doing the same for the red subcomplex.
Later, we will show how all fibers of~$\stiefel$ have a similar behavior.
\end{example}

Permuting the rows of $A$, or adding a scalar to any row, does not change $\pi(A)$,
and therefore neither does left multiplication by any invertible tropical matrix.
The first invariance implies that $\stiefel(A)$ is determined by the list of the projectivization (lying in $\TP$)
of each row of~$A$,
and the second invariance means that $\stiefel(A)$ is determined by the \emph{unordered} list, i.e.\ the multiset, of these projectivizations.
So we will normally discuss fibers of $\stiefel$ in terms of such multisets.
\begin{definition}\label{def:presentation}
A (\/\newword{transversal}\/) \newword{presentation} of a valuated matroid $V$ of rank~$d$
is a multiset $\present$ of $d$ points in $\TP$
such that $V=\stiefel(A)$, where $A$ is a matrix whose rows are coordinate vectors for the elements of~$\present$.
\end{definition}

If we say that a multiset $\present$ is a presentation of a tropical linear space $\Be V$,
we mean that it is a presentation of~$V$. 

The tropical Stiefel map is not surjective onto the space of valuated matroids. 
In \cite{FR} the name \emph{Stiefel tropical linear space} was given to tropical linear spaces of the form $\Be{\pi(A)}$.
We grant the valuated matroids another name motivated in what follows:
\begin{definition}\label{def:transversal}
A valuated matroid $V\in\TT\PP^{\binom{n}{d}-1}$ is \emph{transversal} if it is in the image of $\stiefel$.
An unvaluated matroid $M$ is \emph{transversal} if it is the underlying matroid of a transversal valuated matroid.
\end{definition}

Note that a transversal valuated matroid is not merely an arbitrary valuated matroid whose underlying matroid is transversal.
A counterexample is the valuated matroid $V$ of~\Cref{fig:snowflake},
whose underlying matroid is the transversal matroid $U_{2,6}$, 
but which is not transversal itself as explained in \Cref{ex:no transversal}.


Let us understand why \Cref{def:transversal} agrees with the classical definition of a transversal matroid.
Classically, a \newword{set system} presentation of a transversal matroid on $[n]$ is a multiset $\present$ of subsets of $[n]$.
A set is independent if there is a matching i.e. $J$ is independent if there is an injective function $\sigma:J \to \present$ such that $j\in \sigma(j)$ for every $j\in J$.

Such a set system presentation $\present$ can be turned into a presentation in our sense by replacing each element $[n]\setminus F \in \present$ by $\overline e_F$ where 
\begin{equation}\label{eq:overline e}
(\overline e_J)_j = \begin{cases}\infty & j\in J \\ 0 & j\not\in J\end{cases}
\end{equation}
In the corresponding $\{0,\infty\}$-matrix $A$, we have that $\pi(A)_B = 0$ if there is matching from $B$ and $\infty$ otherwise. 
Conversely, given a transversal valuated matroid $V=\pi(A)$, the multiset consisting of the set of finite entries of each row of $A$ is a presentation of $\underm V$.

%

We caution readers of the literature on transversal matroids
that most authors allow the set system presenting a rank~$d$ matroid to contain more than $d$ sets.
These authors would say that all our presentations are ``of rank~$d$''.

Here is a necessary condition for transversality of valuated matroids.
\begin{proposition}[{Fink, Rinc\'on \cite[Corollary 5.6]{FR}}]\label{prop:FR local}
Let $V$ be a transversal valuated matroid. Then every matroid $M \in \MM(V)$ such that $P_M$ is a facet of $P_V$ is transversal.
\end{proposition}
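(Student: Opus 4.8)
The plan is to use the explicit description of the tropical Stiefel map together with the fact that a facet $P_M$ of $P_V$ with $M \in \MM(V)$ is selected by some $x \in \RR^n$, and to show that a presentation of $V$ can be ``specialized at $x$'' to produce a presentation of $M$. First I would fix a matrix $A \in \TT^{d\times n}$ with $\stiefel(A) = V$, and fix a vector $x \in \RR^n$ that selects the face $P_M$ of the regular subdivision of $P_V$ induced by $V$; since $P_M$ is a facet (codimension one) and $M \in \MM(V)$, $M$ is loopless and $x$ can be chosen generic within the cone selecting $P_M$. The key computation is that for each basis $B$, the quantity $\pl{\stiefel(A)}{B} - \sum_{i\in B}x_i = \min_\sigma \sum_i (A_{i,\sigma(i)} - x_{\sigma(i)})$, where $\sigma$ ranges over bijections $[d]\to B$; so working with the shifted matrix $A - \mathbf{1}x^{\mathsf T}$ (subtract $x_j$ from column $j$) reduces us to the case $x = 0$, i.e.\ where $P_M$ is the face of $P_V$ on which all Plücker coordinates of $V$ attain their minimum, which we may normalize to $0$.

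Next I would build a candidate presentation of $M$ from $A$ by an ``initial form'' construction: for each row $i$, let $m_i := \min_j A_{i,j}$ be the row minimum, and let $A^{\mathrm{in}}$ be the $\{0,\infty\}$-matrix with $A^{\mathrm{in}}_{i,j} = 0$ if $A_{i,j} = m_i$ and $A^{\mathrm{in}}_{i,j} = \infty$ otherwise. The claim to verify is that $\stiefel(A^{\mathrm{in}})$ is exactly $M$ (as a $\{0,\infty\}$-valued valuated matroid), which by the discussion after \Cref{def:transversal} exhibits $M$ as a transversal matroid. For one inclusion, if $B \in \BB(M)$ then $\pl VB = 0 = \sum_i m_i$ after normalization, and the minimizing bijection $\sigma$ in $\pl VB = \min_\sigma \sum_i A_{i,\sigma(i)}$ must use a row-minimum entry in every row, so $B \in \BB(\stiefel(A^{\mathrm{in}}))$. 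For the reverse inclusion one uses that $P_M$ is the \emph{whole} minimizing face: if $B \notin \BB(M)$ then $\pl VB > \sum_i m_i$, forcing every bijection $\sigma$ onto $B$ to pick a non-row-minimal entry somewhere, hence $\pl{\stiefel(A^{\mathrm{in}})}{B} = \infty$. Here I would need to be a little careful that the normalization ``$\min_B \pl VB = \sum_i m_i$'' is consistent — this follows because $\stiefel$ commutes with adding scalars to rows, so one may assume each $m_i = 0$ and then $\min_B \pl VB = 0$ since $P_M \subseteq P_V$ meets the relevant supporting hyperplane.

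The main obstacle I anticipate is the reverse inclusion, i.e.\ ruling out ``spurious'' bases of $\stiefel(A^{\mathrm{in}})$: a priori there could be a $B$ with a bijection $\sigma$ hitting a row-minimum in every row even though $\pl VB$ is not minimal, which would happen if two distinct rows of $A$ share the same support of row-minima in a way not reflected in $V$. The resolution should be exactly that $P_M$ being a \emph{facet} (and $M$ being in $\MM(V)$, so genuinely appearing in the subdivision) forces the set of row-minimum entries to be ``tight'': I would argue that if such a spurious $B$ existed, then $e_B$ would be a vertex of the minimizing face of $P_V$, contradicting that this face is exactly $P_M$ and $B \notin \BB(M)$. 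Packaging this cleanly may require invoking that the regular subdivision cell selected by $x$ has vertex set precisely $\{e_B : \pl VB - \sum_{i\in B} x_i \text{ minimal}\}$, which is \cite[Definition 2.2.10]{dLRS} / the discussion in \Cref{ssec:valuated}. Once that is in hand, the argument is essentially the $\{0,\infty\}$-specialization of the identity $\pl{\stiefel(A)}{B} = \min_\sigma\sum_i A_{i,\sigma(i)}$ combined with the combinatorics of Hall's theorem from \Cref{rem:domain}.
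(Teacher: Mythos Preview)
Your construction $A^{\mathrm{in}}$ is exactly the zoom map $Z_x$ of the paper (\Cref{prop_zoom_pres}), and the overall strategy is the same as the paper's. The paper itself cites \cite{FR} for this proposition, but \Cref{prop_zoom_pres} contains a complete proof (in fact a stronger one, for any coloop-free $M\in\MM(V)$).

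However, you have misidentified the obstacle. Once one knows that $\min_B \pl{\stiefel(A)}B = 0$ after normalizing all row minima to $0$, the reverse inclusion is immediate: if $B\in\BB(\stiefel(A^{\mathrm{in}}))$ then some bijection $\sigma$ uses only zero entries of $A$, so $\pl{\stiefel(A)}B\le 0$, hence $\pl{\stiefel(A)}B=0$ and $B\in\BB(M)$. There is no possibility of spurious bases.

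The genuine gap is the claim $\min_B \pl{\stiefel(A)}B = 0$, equivalently that $A^{\mathrm{in}}$ lies in the domain of $\stiefel$, i.e.\ that the zero entries of the normalized matrix admit a system of distinct representatives. Your sentence ``$\min_B \pl VB = 0$ since $P_M\subseteq P_V$ meets the relevant supporting hyperplane'' does not establish this: setting each $m_i=0$ only gives $\pl{\stiefel(A)}B\ge 0$, and the infimum can be strictly positive. For instance, with
\[
A=\begin{pmatrix}0&1&1\\0&1&1\end{pmatrix}
\]
the row minima are $0$ but $\min_B\pl{\stiefel(A)}B=1$; the corresponding initial matroid at $x=0$ is not a facet, which is precisely the point. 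Showing that the facet (or coloop-free) hypothesis on $M$ forces Hall's condition on the sets of row-minimal positions is the nontrivial half of the argument. The paper spends the second half of the proof of \Cref{prop_zoom_pres} on exactly this, via an augmenting-path argument that uses coloop-freeness of $M$ in an essential way. Your plan does not yet contain any mechanism that invokes the facet hypothesis, so as written the forward inclusion $\BB(M)\subseteq\BB(\stiefel(A^{\mathrm{in}}))$ is unproved.
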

In \Cref{coro_transv} we show that this condition is also sufficient.

\subsection{The set of presentations of a matroid}


Given a set system presentation $\present$ of $M$, we have that $[n]\setminus A$ is a flat of $M$ for every $A\in \present$ 
(this follows, for example, from Lemma~\ref{lem:containment}).
So, to characterize the presentations of $M$ is to determine when a multiset of $d$ flats of $M$ constitutes the complements of a presentation of $M$.
This problem was solved by Brualdi and Dinolt \cite{BrualdiDinolt} who proved that every transversal matroid $M$ has a unique maximal presentation 
and showed how to derive all other presentations from it. 
To describe the unique maximal presentation they use an algorithm which we now discuss.

Let $\mu$ be the M\"obius function on the lattice of cyclic flats $\CF(M)$. For $F\in \CF$ define
\begin{equation}\label{eq:definition of tau}
\tau(F) := \sum\limits_{F'\in \CF(M), \enspace F \subseteq F'} \mu(F, F') \cork(F').
\end{equation}

If $\tau$ is non-negative, we can consider the multiset of cyclic flats $\DF(M)$ where each $F\in \CF(M)$ has multiplicity $\tau(F)$. 
Brualdi calls this the \emph{distinguished family of cyclic flats} \cite[p.~77]{BrualdiWhite}.
\begin{proposition}[Brualdi and Dinolt \cite{BrualdiDinolt},, Theorem 4.7]
\label{BrualdiDinolt}
Let $M$ be a transversal matroid. Then $\tau$ is non-negative, 
and the complements of the distinguished family of cyclic flats make up the unique maximal presentation of $M$. 
Moreover, $\AAA= \ldb A_1, \dots, A_d\rdb$ is a presentation if and only if the complements are flats $F_i= [n]\setminus A_i$ such that 
\[
\ldb \cocl(F_1),\dots, \cocl(F_d)\rdb = \DF(M)
\]
and for every $I\subseteq [d]$
\[
\cork(\bigcap_{i\in I}F_i) \ge |I|.
\] 
\end{proposition}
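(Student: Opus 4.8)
The plan is to derive all three assertions from a single counting identity: \emph{for every presentation $\AAA=\ldb A_1,\dots,A_d\rdb$ of $M$ and every $Z\in\CF(M)$, exactly $\cork(Z)$ of the $A_i$ are disjoint from $Z$.} Granting this, fix a presentation and set $F_i=[n]\setminus A_i$; by \Cref{lem:containment} each $F_i$ is a flat. The corank inequality is the elementary half: $\bigcap_{i\in I}F_i=[n]\setminus\bigcup_{i\in I}A_i$, and any $M$-independent set contained in it admits a matching into $[d]$ that necessarily avoids $I$, so has at most $d-|I|$ elements; hence $\cork(\bigcap_{i\in I}F_i)\ge|I|$. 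For the other condition, recall that the coclosure of a flat is a cyclic flat, and that for $Z\in\CF(M)$ one has $Z\subseteq\cocl(F_i)\iff Z\subseteq F_i\iff A_i\cap Z=\emptyset$. So the counting identity reads $\#\{i:Z\subseteq\cocl(F_i)\}=\cork(Z)$ for all $Z\in\CF(M)$, and Möbius inversion over $\CF(M)$ turns this into: the multiplicity of $Z$ in the multiset $\ldb\cocl(F_1),\dots,\cocl(F_d)\rdb$ equals $\sum_{Z'\supseteq Z}\mu(Z,Z')\cork(Z')=\tau(Z)$. Being a multiplicity it is $\ge0$ — the non-negativity of $\tau$ — and it identifies the multiset of coclosures with $\DF(M)$.

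\textbf{Proving the counting identity.} That at least $\rk(Z)$ of the $A_i$ meet $Z$ is immediate: $M|Z$ is transversal with presenting family $\ldb A_i\cap Z:i\in[d]\rdb$, whose empty members are inert, so reaching rank $\rk(Z)$ needs at least $\rk(Z)$ nonempty members. The reverse bound is where cyclicity enters: $Z\in\CF(M)$ forces $M|Z$ to be coloop-free, and I would show that any family presenting a coloop-free transversal matroid has exactly $\rk(M|Z)$ nonempty members — more such members would, by the defect form of Hall's theorem, furnish a subfamily covering strictly fewer elements than its own size, and unwinding this (a König-type argument) exhibits a coloop of $M|Z$. I expect this coloop-free estimate to be the main obstacle; it is essentially Brualdi and Dinolt's original argument, and an alternative is to induct on $|E|$ via the effect on presentations of deleting an element.

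\textbf{Sufficiency.} Conversely, let $F_1,\dots,F_d$ be flats with $\ldb\cocl(F_1),\dots,\cocl(F_d)\rdb=\DF(M)$ and $\cork(\bigcap_{i\in I}F_i)\ge|I|$ for every $I$, and let $N$ be the transversal matroid presented by $\ldb A_1,\dots,A_d\rdb$ with $A_i=[n]\setminus F_i$. It suffices to show $\rk_N=\rk_M$. For ``$\le$'', fix $S$ and $Z\in\CF(M)$ and plug $T=S\cap Z$ into the transversal rank formula $\rk_N(S)=\min_{T\subseteq S}\bigl(|S\setminus T|+\#\{i:A_i\cap T\ne\emptyset\}\bigr)$; since each $F_i$ is a flat and $\ldb\cocl(F_1),\dots,\cocl(F_d)\rdb=\DF(M)$, the number $\#\{i:A_i\cap Z=\emptyset\}=\#\{i:Z\subseteq\cocl(F_i)\}$ equals $\sum_{Z'\supseteq Z}\tau(Z')=\cork_M(Z)$, so $\#\{i:A_i\cap Z\ne\emptyset\}=\rk_M(Z)$ and $\rk_N(S)\le|S\setminus Z|+\rk_M(Z)$; minimising over $Z$ and using $\rk_M(S)=\min_{Z\in\CF(M)}\bigl(\rk_M(Z)+|S\setminus Z|\bigr)$ gives $\rk_N(S)\le\rk_M(S)$. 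For ``$\ge$'', fix $T\subseteq S$ and put $I=\{i:\cl_M(T)\subseteq F_i\}$; then $\bigcap_{i\in I}F_i\supseteq\cl_M(T)$, so the corank hypothesis yields $|I|\le\cork_M(\bigcap_{i\in I}F_i)\le\cork_M(T)$, hence $\#\{i:A_i\cap T\ne\emptyset\}=d-|I|\ge\rk_M(T)$ and $|S\setminus T|+\#\{i:A_i\cap T\ne\emptyset\}\ge\rk_M(T)+|S\setminus T|\ge\rk_M(S)$; taking the minimum over $T$ gives $\rk_N(S)\ge\rk_M(S)$. Hence $N=M$.

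\textbf{Maximality.} Finally, every presentation satisfies $A_i=[n]\setminus F_i\subseteq[n]\setminus\cocl(F_i)$, and $\ldb[n]\setminus\cocl(F_1),\dots,[n]\setminus\cocl(F_d)\rdb$ is again a presentation: its coclosure multiset is still $\DF(M)$, and the corank condition is inherited because $\bigcap_{i\in I}\cocl(F_i)\subseteq\bigcap_{i\in I}F_i$ and $\cork$ is order-reversing. So the complements of $\DF(M)$ form a presentation containing every other, i.e.\ the unique maximal one.
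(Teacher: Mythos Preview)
The paper does not give its own proof of this proposition: it is stated with attribution to Brualdi and Dinolt and then reformulated as \Cref{prop:Bonin}, but never proved. So there is no in-paper argument to compare your approach to directly; what I can do is assess your argument on its own and point out where the paper independently establishes the same ingredients.

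Your proof is correct. The one step you flag as the ``main obstacle'' --- that a family presenting a coloop-free transversal matroid of rank $r$ has at most $r$ nonempty members --- does go through along the K\"onig line you sketch. Concretely: restrict to the $k$ nonempty members and take a minimum vertex cover $I\cup J$ of the bipartite incidence graph, with $I$ on the set side, $J$ on the element side, $|I|+|J|=r$. Every basis $B$ has a matching into $[k]$; elements of $B\setminus J$ can only be matched to indices in $I$, so $|B\setminus J|\le|I|$ and hence $J\subseteq B$. Thus every element of $J$ is a coloop, forcing $J=\emptyset$, whence all nonempty members lie in $I$ and $k\le|I|=r$. With this filled in, the counting identity, the M\"obius inversion to $\DF(M)$, and the rank-function comparison for sufficiency are all sound. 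Two formulas you invoke deserve a citation or a line of justification in a final write-up: the transversal rank formula $\rk_N(S)=\min_{T\subseteq S}\bigl(|S\setminus T|+\#\{i:A_i\cap T\ne\emptyset\}\bigr)$ (K\"onig duality) and the cyclic-flat rank formula $\rk_M(S)=\min_{Z\in\CF(M)}\bigl(\rk_M(Z)+|S\setminus Z|\bigr)$ (take $Z=\cocl(\cl_M(S))$ to witness the minimum).

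For context, the paper later proves a more general form of your counting identity as part of \Cref{prop_A}: for presentations of $M$ by arbitrary points of $\Be M$ (not just $\{0,\infty\}$-points), exactly $\cork(Z)$ members lie in the region $R_\infty(Z,\Be M)$ for each $Z\in\CF(M)$. The argument there is likewise a Hall/matching argument, so your approach is very much in the same spirit as what the paper does in the generalized setting.
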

At the heart of this paper is the idea of generalizing the above result to valuated matroids.

The literature contains several statements similar or equivalent to the above.
Below we describe another reformulation of \Cref{BrualdiDinolt} as a precise bijection between integer vectors and presentations.
See Bonin~\cite{BoninTransversalNotes} for more detail on the equivalence.

\begin{proposition}\label{prop:Bonin}
Let $M$ be a matroid, and $\beta:\FF(M)\to\mathbb Z$.
Then $M$ has a transversal presentation consisting of $\beta(F)$ copies of $[n]\setminus F$ for each $F\in\FF(M)$
if and only if $\beta$ satisfies the following inequalities:
\begin{align}
\beta(F)\geq0 &\quad\mbox{for all $F\in\FF(M)$}\label{eq:beta pos}\\
\sum_{G\geq F}\beta(G)\leq\cork(F) &\quad\mbox{for all $F\in\FF(M)$}\label{eq:beta F}\\
\sum_{G\geq F}\beta(G)=\cork(F) &\quad\mbox{for all $F\in\CF(M)$}.\label{eq:beta CF}
\end{align}
\end{proposition}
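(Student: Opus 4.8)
The plan is to derive \Cref{prop:Bonin} from \Cref{BrualdiDinolt}. Set $d=\rk(M)$; given $\beta:\FF(M)\to\ZZ$, let $\AAA_\beta$ be the multiset with $\beta(F)$ copies of $[n]\setminus F$ for each flat $F$. Since \eqref{eq:beta pos} is exactly the condition for $\AAA_\beta$ to be defined — and it holds automatically whenever a presentation of the stated form exists — I may assume it throughout and call $F_1,F_2,\dots$ the complements of the members of $\AAA_\beta$, a list of flats in which $F$ occurs $\beta(F)$ times. When $M$ is transversal, \Cref{BrualdiDinolt} reduces the claim to showing that (a)~``$\cork\bigl(\bigcap_{i\in I}F_i\bigr)\ge|I|$ for every $I\subseteq[d]$'' is equivalent to \eqref{eq:beta F} and that (b)~``$\ldb\cocl(F_1),\cocl(F_2),\dots\rdb=\DF(M)$'' is equivalent to \eqref{eq:beta CF}; the non-transversal case I would handle separately at the end.

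For (a) $\Leftrightarrow$ \eqref{eq:beta F} I would use only that corank is non-increasing under inclusion. Given a flat $F$, taking $I=\{i:F_i\supseteq F\}$ gives $\bigcap_{i\in I}F_i\supseteq F$, so (a) yields $\sum_{G\supseteq F}\beta(G)=|I|\le\cork\bigl(\bigcap_{i\in I}F_i\bigr)\le\cork(F)$, which is \eqref{eq:beta F}. Conversely, for an arbitrary $I$ the flat $G_0=\bigcap_{i\in I}F_i$ satisfies $I\subseteq\{i:F_i\supseteq G_0\}$, so \eqref{eq:beta F} gives $|I|\le\sum_{G\supseteq G_0}\beta(G)\le\cork(G_0)$, i.e.\ (a).

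For (b) $\Leftrightarrow$ \eqref{eq:beta CF} I would set $\gamma(Z):=\#\{i:\cocl(F_i)=Z\}$, use that $\cocl(F)\in\CF(M)$ for every flat $F$ (a standard fact, see below) so that $\ldb\cocl(F_1),\cocl(F_2),\dots\rdb$ is the multiset of cyclic flats with multiplicity function $\gamma$, and use that $\cocl(F)$ is the largest cyclic subset of $F$ to get $Z\subseteq\cocl(F_i)\iff Z\subseteq F_i$ for $Z\in\CF(M)$. Summing these equivalences over $i$ gives, for every $Z\in\CF(M)$,
\[ \sum_{Z'\supseteq Z}\gamma(Z')=\#\{i:\cocl(F_i)\supseteq Z\}=\#\{i:F_i\supseteq Z\}=\sum_{G\supseteq Z}\beta(G), \]
with $Z'$ ranging over cyclic flats and $G$ over flats. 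Since \eqref{eq:definition of tau} is M\"obius inversion of $\cork$ over the poset $\CF(M)$, one also has $\sum_{Z'\supseteq Z}\tau(Z')=\cork(Z)$ for $Z\in\CF(M)$; and since a function on a finite poset is recovered from its upward partial sums, $\gamma=\tau$ — that is, $\ldb\cocl(F_1),\cocl(F_2),\dots\rdb=\DF(M)$ — holds iff $\sum_{G\supseteq Z}\beta(G)=\cork(Z)$ for all $Z\in\CF(M)$, which is \eqref{eq:beta CF}. This also shows \eqref{eq:beta CF} forces $\tau=\gamma\ge0$, so $\DF(M)$ is then a genuine multiset, and, together with $\cork(\cl(\emptyset))=d$, that $\sum_F\beta(F)=d$.

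For non-transversal $M$ the left side of the equivalence fails trivially, and the right side fails too: a $\beta$ satisfying \eqref{eq:beta pos}--\eqref{eq:beta CF} would, by the last remark, give $\tau\ge0$ and satisfy (a) and (b), so Brualdi and Dinolt's construction of a presentation from the distinguished family and Hall's condition — which does not presuppose transversality — would realize $\AAA_\beta$ as a presentation of $M$, a contradiction. I expect the only genuinely non-formal point in all of this to be the fact that $\cocl(F)$ is a cyclic flat for each flat $F$; it is standard, with a one-line proof (if $e\in\cl(\cocl(F))\setminus\cocl(F)$ then $e\in\cl(F)=F$, and a circuit through $e$ inside $\cocl(F)\cup\{e\}$, or $\{e\}$ itself when $e$ is a loop, would enlarge the largest cyclic subset of $F$, absurd). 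The remaining ingredients — monotonicity of corank, M\"obius inversion on $\CF(M)$, and invertibility of a finite poset's zeta transform — are routine, and loops cause no trouble since they lie in every flat and hence in no complement $[n]\setminus F$.
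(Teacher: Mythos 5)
The paper does not actually prove this proposition: it frames it as ``another reformulation of \Cref{BrualdiDinolt}'' and defers the equivalence to Bonin's notes. So there is no proof in the paper to compare against line by line; your proposal supplies the reduction that the paper only gestures at, and it is the intended one. Your two equivalences are correct. The monotonicity-of-corank argument for (a)$\Leftrightarrow$\eqref{eq:beta F} works in both directions as you give it. The argument for (b)$\Leftrightarrow$\eqref{eq:beta CF} — matching the upward sums of $\gamma$ and of $\tau$ over $\CF(M)$ via $Z\subseteq\cocl(F_i)\iff Z\subseteq F_i$ for cyclic $Z$, then using that the zeta transform on a finite poset is invertible, and that \eqref{eq:definition of tau} is exactly M\"obius inversion of $\cork$ on $\CF(M)$ — is right, and it is also where you recover $\sum_F\beta(F)=d$ from the case $F=\cl(\emptyset)$, which you need before \Cref{BrualdiDinolt} can be invoked at all (its $\AAA$ must have exactly $d$ members). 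Your one-line proof that $\cocl(F)$ is a cyclic flat is fine.

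The one place the argument is soft is the non-transversal case, which is the only place you step outside what the paper's \Cref{BrualdiDinolt} actually licenses. That proposition is hypothesized on $M$ being transversal, so you cannot invoke its ``if'' direction when $M$ is a priori arbitrary; your parenthetical ``which does not presuppose transversality'' is an unverified claim about Brualdi and Dinolt's original proof, not something available from the statement you are reducing to. This is exactly the kind of step you flag as ``genuinely non-formal'' elsewhere, but you let it pass silently here. The clean repair is a short Rado-type verification that \eqref{eq:beta pos}--\eqref{eq:beta CF} force $\AAA_\beta$ to present $M$ itself: for $J\subseteq[n]$, Hall's condition for matching $J$ into $\AAA_\beta$ says $\#\{i:F_i\supseteq J'\}\le d-|J'|$ for all $J'\subseteq J$; since $\{i:F_i\supseteq J'\}=\{i:F_i\supseteq\cl(J')\}$, independence of $J$ in $M$ gives this from \eqref{eq:beta F} applied to $\cl(J')$, while if $J$ contains a circuit $C$ then taking $J'=J\cap\cocl(\cl(C))$, the cyclic flat $Z=\cocl(\cl(C))$ has $\rk(Z)\le|C|-1$ and $C\subseteq Z$, so \eqref{eq:beta CF} gives $\#\{i:F_i\supseteq J'\}\ge\#\{i:F_i\supseteq Z\}=\cork(Z)\ge d-|C|+1>d-|J'|$. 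That both closes the non-transversal case and makes the final argument self-contained, with no appeal to the unstated scope of \Cref{BrualdiDinolt}.
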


Notice that if $M$ is a transversal matroid,
extending $\tau$ to be 0 for every non-cyclic flat yields a solution of the integer program in \Cref{prop:Bonin}.
This is the minimal such function in the following sense:
if $\beta$ is a solution of this system for some matroid~$M$, then by \Cref{BrualdiDinolt} we have that for every $F\in\CF(M)$ 
\[
\sum_{\cocl(G)=F}\beta(G) = \tau (F).
\]

Testing if $M$ is transversal can be done by checking whether $\tau$ (as defined in \Cref{eq:definition of tau}) satisfies inequalities \eqref{eq:beta pos} and~\eqref{eq:beta F}. Another test for transversality, \Cref{prop_transv_charac}, was provided by Mason and Ingleton.

The above discussion shows that every set system presentation of~$M$
can be obtained from the maximal presentation by replacing some elements $F$ with $G$ where $\cocl(G)=F$.
Therefore, every set system presentation of~$M$ is obtained from the maximal presentation by adding relative coloops to the flats chosen.

\begin{example}
\label{ex:uniform}
The work \cite{FR} focuses on presentations of valuated matroids $V$ with no $\pl VB=\infty$,
which it represents as matrices like $A$ in \Cref{def:presentation}.

The underlying matroid of any such~$V$ is the \newword{uniform matroid} $U_{d,n}$,
the matroid with $\BB(U_{d,n})=\binom{[n]}d$.
The only cyclic flats of $U_{d,n}$ are $\emptyset$ and $[n]$,
so we get $\tau([n])=0$ (as is the case for all matroids) and $\tau(\emptyset) = d$.
Hence the maximal presentation of $U_{d,n}$ is $\ldb\underbrace{[n],\ldots,[n]}_d\rdb$.

The non-cyclic flats of~$U_{d,n}$ are all sets $F$ such that $0<|F|<d$.
Inequality~\eqref{eq:beta F} says that for any $J\subseteq[n]$ with $|J|<d$, 
there cannot be more than $d-|J|$ sets among the complements of a presentation of~$U_{d,n}$
that are supersets of or equal to~$J$.
Because a proper flat of $U_{d,n}$ has at most $d-1$ elements, the case $|J|=d$ of the last sentence is true as well.
\Cref{prop:Bonin} says that any set system of $d$ sets satisfying these conditions is a presentation of~$U_{d,n}$.
After translating to matrices via equation~\eqref{eq:overline e},
this is the statement (c)$\Leftrightarrow$(d) of \cite[Proposition 8]{FR}.
The reader may check that when $n=d$ one recovers Philip Hall's marriage theorem,
and when $n=d+1$, the dragon marriage theorem of Postnikov~\cite{PostPAB}.
\end{example}

\begin{example}
\label{ex:no transversal}
Consider the matroid $M$ on 6 elements of rank~2 given by $\BB(M) := {6\choose 2}\setminus\{12,34,56\}$. 
For $M$ to have a transversal presentation, $\beta$ would have to satisfy $\beta(12)=\beta(34)= \beta(56)= 1$, as all of the sets $12$, $34$, $56$ are cyclic flats of corank~$1$. 
But this means that $\sum\limits_{F\ge \emptyset}\beta(F) \ge 3 > \cork(\emptyset) = 2$, which is a violation of condition \eqref{eq:beta CF}. 
In consequence, no valuated matroid $V$ such that $M\in \MM(V)$ can be in the image of the Stiefel map.

\begin{figure}[htb]
	\centering
		\includegraphics[scale=0.5]{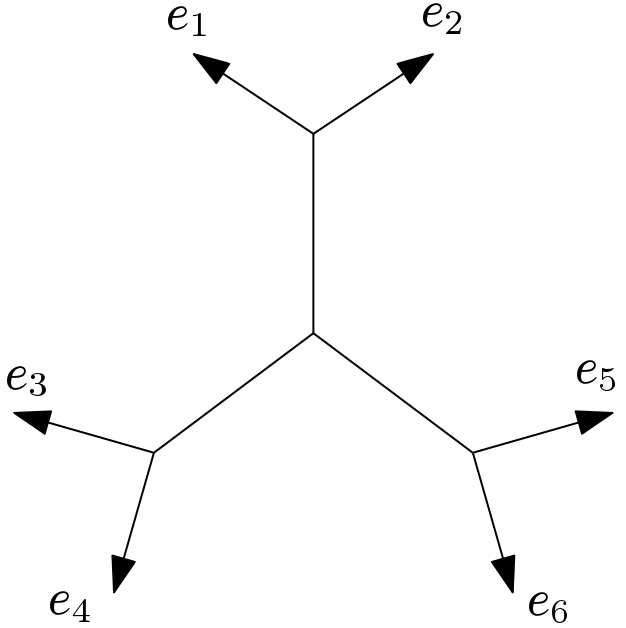}
	\caption{The `snowflake' tropical linear space, where $V_{12} = V_{34} = V_{56}=1$ and $V_B = 0$ for $B \in {6\choose 2}\setminus \{12,34,56\}$, does not correspond to a transversal valuated matroid.}
	\label{fig:snowflake}
\end{figure}

Similar reasoning shows that no rank~2 matroid with three or more nontrivial parallel classes has a transversal presentation.
The non-transversality of a valuated matroid can be seen in the geometry of the corresponding linear space. 
For example, the tropical linear space in \Cref{fig:snowflake} has a vertex incident to 3 bounded edges.
This vertex corresponds to the non-transversal matroid $M$ and each bounded edge corresponds to one of its non trivial cyclic flats.
This provides one proof that the tree formed by the bounded faces of a Stiefel tropical linear space of rank~$2$ is a path. 

\end{example}

\subsection{Additional remarks}

\begin{remark}\label{rem:tGrassmannian}
The image of~$\stiefel$ is always contained in the tropical Grassmannian $\operatorname{TropGr}(d,n)$, 
the tropicalization of the Grassmannian over a field in its Pl\"ucker embedding~\cite{SS}. 
The matroid of~\Cref{ex:no transversal} lies in the tropical Grassmannian for any field,
so $\stiefel$ does not surject onto $\operatorname{TropGr}(d,n)$.
\end{remark}

\begin{remark}
A family of presentations that have been the focus of much previous work are the \emph{pointed} presentations,
where $A$ has a tropical identity matrix as a maximal submatrix
\cite{HJS,Rincon,JoswigLoho}.
The unvaluated matroids with pointed presentations are called
\emph{fundamental transversal matroids} \cite[Section 3.1]{BoninTransversalNotes}
(see also \cite{Bix77,RI80});
by \Cref{prop_zoom_pres}, these presentations can be taken to be by $\{0,\infty\}$ matrices.
If $V$ has a pointed presentation $A$, then all facets of~$P_V$ share the vertex $e_J$
where $A_J$ is the identity submatrix.
The converse is false: 
for example, non-fundamental transversal matroids exist, and for these $P_V$ has only one facet.
In other words, 
whereas the Grassmannian $\operatorname{Gr}(d,\mathbb K^n)$ over a field $\mathbb K$ has an atlas of charts isomorphic to $\mathbb A_{\mathbb K}^{d(n-d)}$,
one for each position of the identity submatrix,
the corresponding maps from $\mathbb T^{d(n-d)}$ fail even to cover the image of~$\stiefel$.
\end{remark}

\begin{remark}\label{rem:stable sum}
If $V$ and $V'$ are valuated matroids on~$[n]$ of respective ranks $d$ and~$d'$,
their \emph{stable sum} $V+V'$ is the valuated matroid of rank $d+d'$ defined by
\[\pl{(V+V')}J = \min\{\pl VB+\pl{V'}{B'} : \textstyle B\in\binom{[n]}d,B'\in\binom{[n]}{d'},B\cup B'=J\}\]
for each $J\in\binom{[n]}{d+d'}$,
provided that $\pl{(V+V')}J<\infty$ for some~$J$.
Stable sum generalizes matroid union in the special case that the matroid union is additive in rank,
for which reason Frenk~\cite[Section 4.1]{Frenk} calls it the ``valuated matroid union''.
In this language, presentations are decompositions of a valuated matroid as a stable sum of rank~1 valuated matroids.
\end{remark}

\begin{remark}
A way of looking at the tropical Stiefel map which we do not take up here is in terms of the semimodule theory of~$\TT$.
This viewpoint is adopted in \cite{CGM},
and is generalized in \cite{Mundinger} to the valuated version of Perfect's ``induction'' of a matroid across a directed graph \cite{Perfect}.
\end{remark}

\section{Characterizing presentations by regions}\label{sec:regions}

In this section, we characterize presentations of a valuated matroid $V$
in terms of bounds on the number of points which may lie in certain regions of~$\Be V$.

We start by noting that the search for transversal presentations of a tropical linear space $L$ is helpfully delimited by the fact that all elements of a presentation must lie in~$L$.
This is essentially the tropical Cramer rule \cite{TCDR,FirstSteps},
but the proof is short so we include it for convenience.

\begin{lemma}\label{lem:containment}
Let $\ldb A_1,\ldots,A_d\rdb$ be a transversal presentation of a valuated matroid $V$.
Then $A_i\in \Be V$ for each $i\in[d]$.
\end{lemma}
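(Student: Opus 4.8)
The claim is that if $\ldb A_1,\ldots,A_d\rdb$ presents $V$, then each row $A_i$, viewed as a point of $\TP$, lies on the tropical linear space $\Be V$. The plan is to verify the defining inequality of $\Be V$ directly: for every $(d+1)$-subset $C \in \binom{[n]}{d+1}$, the minimum of $(A_i)_j + \pl V{C \setminus \{j\}}$ over $j \in C$ must be attained at least twice. Fix $i$; without loss of generality take $i = 1$, and fix such a $C$.

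First I would unfold $\pl V{C\setminus\{j\}}$ using the Stiefel formula: $\pl V{C\setminus\{j\}} = \min\{\sum_{k=1}^d (A_k)_{\sigma(k)} : \sigma \text{ a bijection } [d] \to C\setminus\{j\}\}$. Substituting, the quantity $(A_1)_j + \pl V{C\setminus\{j\}}$ equals the minimum over all injections $\sigma : [d] \to C$ whose image is $C \setminus \{j\}$ of $(A_1)_j + \sum_{k=1}^d (A_k)_{\sigma(k)}$. Now combine over $j \in C$: the overall minimum $\mu := \min_{j\in C}\big((A_1)_j + \pl V{C\setminus\{j\}}\big)$ equals the minimum of $(A_1)_{j} + \sum_{k=1}^d (A_k)_{\sigma(k)}$ over all pairs $(j,\sigma)$ with $\sigma$ an injection $[d]\to C$ and $\{j\} = C \setminus \im\sigma$. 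Equivalently, introducing $\tau : [d+1] \to C$ by $\tau(d+1) = j$ and $\tau(k) = \sigma(k)$ for $k \le d$, this is the minimum over all \emph{bijections} $\tau : [d+1] \to C$ of $\sum_{k=1}^{d+1}(A_{k}^{+})_{\tau(k)}$, where I write $A^{+}_{d+1} := A_1$ for bookkeeping, i.e.\ the minimum-weight perfect matching in the bipartite graph on $[d+1]$ (rows, where two of them — indices $1$ and $d+1$ — carry the identical weight vector $A_1$) versus $C$.

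The key step is then a pigeonhole/swap argument: take a bijection $\tau$ achieving the minimum $\mu$. Since rows $1$ and $d+1$ of the enlarged system have identical weights, the bijection $\tau'$ obtained from $\tau$ by swapping the values $\tau(1)$ and $\tau(d+1)$ also achieves weight $\mu$. This $\tau'$ is genuinely different from $\tau$ as long as $\tau(1) \ne \tau(d+1)$, which always holds since $\tau$ is a bijection. Reading off the corresponding $(j,\sigma)$ data, $\tau$ and $\tau'$ give two \emph{distinct} indices $j = \tau(d+1)$ and $j' = \tau'(d+1) = \tau(1)$ of $C$ at which the minimum $\mu$ is attained (each with its attaining injection). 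Hence the minimum over $j\in C$ is attained at least twice, which is exactly the tropical Pl\"ucker-type incidence condition defining membership in $\Be V$; therefore $A_1 \in \Be V$.

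The main obstacle to watch is the bookkeeping around infinite coordinates and the meaning of membership in the \emph{projective} space $\TP$: one must check that $A_1 \ne (\infty,\ldots,\infty)$ (true since $A$ lies in the domain of $\stiefel$, by \Cref{rem:domain}, so some entry in row $1$ is finite) and that when the relevant minimum $\mu$ is $+\infty$ the incidence condition holds vacuously, while when it is finite the minimizing $\tau$ has all relevant $(A_k)_{\tau(k)}$ finite so the swap stays within the finite regime. Once these degenerate cases are dispatched, the swap argument above is the whole content, and it is short — which is why the lemma is flagged as having a brief self-contained proof.
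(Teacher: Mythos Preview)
Your proposal is correct and takes essentially the same approach as the paper: both arguments duplicate row~$i$ to form a $(d+1)\times n$ matrix, interpret $\min_{j\in C}\big((A_i)_j+\pl V{C\setminus\{j\}}\big)$ as a minimum-weight perfect matching between $[d+1]$ and~$C$, and then observe that swapping the assignments of the two identical rows produces a second minimizer with a different ``leftover'' column~$j$. Your write-up is slightly more careful about the bookkeeping (in particular the degenerate $\mu=\infty$ case and the check that $A_i\ne(\infty,\ldots,\infty)$), but the mathematical content is identical.
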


\begin{proof}
Write the presentation as a matrix $A\in\TT^{d\times n}$.
Define an expanded matrix $A^{(i)}$
whose first $d$ rows agree with~$A$ and whose $(d+1)$\/st row equals its $i$\/th row.
Given a set $C\in\binom{[n]}{d+1}$, 
let $(j(i') : i'\in[d+1])$ be a transversal from $[d+1]$ to~$C$ in~$A^{(i)}$
so that $\sum_{i'} A^{(i)}_{i',j(i')}$ is minimal.
By construction of $A^{(i)}$,
swapping the $i$\/th and $(d+1)$\/th entries of the transversal preserves this sum.
This implies that both $k=j(i)$ and $k=j(d+1)$ minimize the quantity $A_{i,k}+\pl L{C\setminus\{k\}}$,
because in each case $\pl L{C\setminus\{k\}}$ is the sum of the matrix entries in the transversal other than
the entry in the $(d+1)$\/th row, which contributes $A_{i,k}$.
Therefore the tropical equations in the definition of~$\Be V$ hold at $A_i$.
\end{proof}

Our next step is to generalize Proposition~\ref{prop:Bonin},
which characterizes set system presentations of matroids,
to describe presentations of unvaluated matroids by points with unrestricted tropical coordinates.
In this case, the regions we invoke can be seen as generalizing
the ranges of summation in inequalities \eqref{eq:beta F} and~\eqref{eq:beta CF}.

For that purpose we define \emph{relative support}.
This is essentially the same notion as covectors in the theory of tropical hyperplane arrangements \cite[Section~3]{ArdilaDevelin}.
The covector of a point is the list of complements of
its relative supports with respect to the apex of each tropical hyperplane.

\begin{definition}
\label{def:relsupp}
Let $x$ and $y$ be two points in~$\TP$ such that $x$ has finite coordinates. The \emph{relative support} $\relsupp_x(y)\subseteq [n]$ of $y$ with respect to~$x$
is the set indexing the coordinates where $y-x$ does not attain its minimum.
\end{definition}

Note that addition of a scalar multiple of $(1,\ldots,1)$ to the coordinates of a point does not affect its relative support, so the relative support is well defined. 
If $x$ has a fixed vector of affine coordinates $(x_1,\ldots,x_n)\in\RR^n$,
then we say that the \emph{supportive} choice of affine coordinates $(y_1,\ldots,y_n)$ for~$y$,
with respect to $(x_1,\ldots,x_n)$,
is the one which achieves $\min_j(y_j-x_j)=0$.  
In terms of supportive coordinates, \Cref{def:relsupp} becomes
\[\relsupp_x(y) = \{j\in[n]:y_j>x_j\}.\]

Let $L = \Be M$ where $M$ is a matroid of rank $d$ on~$[n]$.
By definition of $L$, we have that $\relsupp_0(y)\in \FF(M)$ for every $y\in L$. 
So for each flat $F\in \FF(M)$ we define the region
\[
R_0(F,L) := \{y \in L : F\subseteq \relsupp_0(y)\}.
\]
In supportive coordinates with respect to the zero vector, 
$R_0(F,L)$ consists of all the points which have positive entries in the coordinates indexed by $F$. 
Similarly, for each cyclic flat $F\in \CF(M)$ we define another region
\[
R_\infty(F,L) := \{y \in L : \forall j\in F, \enspace y_j = \infty\}.
\]
In other words, $R_0(F,L)$ consists of all points $y$ in $L$ where no coordinate of $y$ in $F$ achieves the minimum among its coordinates and $R_\infty(F,L)$ are those points in $L$ whose coordinates in $F$ are $\infty$. Clearly $R_0(F,L)\subseteq R_\infty(F,L)$. Given a multiset of $d$ points in $L$, $\present=\ldb A_1,\dots, A_d\rdb$, we define the numbers
\begin{align*}
\sigma_0(\present,F) &:= |\{i\in [d] : A_i \in R_0(F,L)\}| \\
\sigma_\infty(\present,F) &:= |\{i\in [d] : A_i \in R_\infty(F,L)\}|
\end{align*}
where $F$ is a flat in the first line, and a cyclic flat in the second.

\begin{proposition}
\label{prop_A}
Let $M$ be a transversal matroid, $L = \Be M$ and $A_1,\dots, A_d \in L$. Then $\present = \ldb A_1,\dots, A_d\rdb$ is a presentation of $M$ if and only if the following conditions hold:
\begin{enumerate}
	\item $\forall F\in \FF(M), \enspace \sigma_0(\present,F) \le \cork(F)$. \label{cond1}
	\item $\forall F\in \CF(M), \enspace \sigma_\infty(\present,F) = \cork(F)$. \label{cond2}
\end{enumerate}
\end{proposition}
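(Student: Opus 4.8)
The plan is to reduce \Cref{prop_A} to the purely set-theoretic characterization of \Cref{prop:Bonin} by passing from arbitrary points $A_i\in L=\Be M$ to set-system data. The bridge is \Cref{lem:containment} in reverse together with the relative-support machinery just introduced. Given a multiset $\present=\ldb A_1,\dots,A_d\rdb$ of points in $L$, set $F_i:=\relsupp_0(A_i)$ and $G_i:=\cocl_M(F_i)$. By the definition of $L=\Be M$ we noted that each $F_i$ is a flat of $M$, so the multiset $\ldb F_1,\dots,F_d\rdb$ is a candidate for the complements of a set-system presentation. The first thing I would prove is a "discretization" statement: for any $A_i\in L$, the rank-$1$ valuated matroid given by the row $A_i$ has the same effect in a stable sum as the rank-$1$ matroid whose row is $\overline e_{F_i}$, in the sense that $\stiefel$ of the matrix with rows $A_1,\dots,A_d$ agrees with $\stiefel$ of the matrix with rows $\overline e_{F_1},\dots,\overline e_{F_d}$ \emph{as a matroid} (not as a valuated matroid). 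This should follow by inspecting which index sets $B$ receive a finite value: a set $B$ admits a finite tropical maximal minor from rows $A_1,\dots,A_d$ iff there is an injection $f\colon B\to[d]$ with $(A_{f(j)})_j\ne\infty$, and since $A_i\in L$ forces the $\infty$-support of $A_i$ to be a flat of $M^*$-type data, one checks that the finite-support pattern of the Stiefel image is governed only by the flats $F_i$. (This is essentially the content of the last sentence of \Cref{sec:transversality} before \Cref{ex:uniform}, applied row by row.)

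Next I would translate the two numerical conditions. The key observation is that membership $A_i\in R_0(F,L)$ is equivalent to $F\subseteq\relsupp_0(A_i)=F_i$, and membership $A_i\in R_\infty(F,L)$ is equivalent to $F$ being contained in the $\infty$-support of $A_i$, which for a point of $L$ is equivalent to $F\subseteq G_i=\cocl_M(F_i)$ — here one uses that the $\infty$-coordinates of a point of $L=\Be M$ form the complement of a flat containing no coloops, i.e.\ a cyclic set, so the largest such contained in $F_i$ is $\cocl_M(F_i)$. Consequently
\[
\sigma_0(\present,F)=\#\{i: F\subseteq F_i\}=\sum_{G\geq F}\beta(G),\qquad
\sigma_\infty(\present,F)=\#\{i: F\subseteq G_i\}=\sum_{\cocl(F_i)\geq F}\beta(G_i),
\]
where $\beta\colon\FF(M)\to\ZZ$ counts the multiplicity of each flat among $\ldb F_1,\dots,F_d\rdb$. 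With this dictionary, condition~\eqref{cond1} is exactly inequality~\eqref{eq:beta F} (and is vacuous or automatic where \Cref{prop:Bonin} asks nothing), while condition~\eqref{cond2}, after the coclosure identification, is exactly the equality~\eqref{eq:beta CF}. The nonnegativity~\eqref{eq:beta pos} is automatic since $\beta$ counts elements of a multiset.

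Putting the pieces together: if $\present$ is a presentation of $M$, then writing it as a matrix and applying \Cref{lem:containment} gives $A_i\in L$, and the discretization step plus \Cref{prop:Bonin} forces $\beta$ to satisfy \eqref{eq:beta pos}--\eqref{eq:beta CF}, hence \eqref{cond1} and~\eqref{cond2} hold. Conversely, if \eqref{cond1} and~\eqref{cond2} hold, the dictionary shows $\beta$ satisfies the hypotheses of \Cref{prop:Bonin}, so $\ldb[n]\setminus F_1,\dots,[n]\setminus F_d\rdb$ is a set-system presentation of $M$; by the discretization step, $\stiefel$ of the matrix with rows $A_1,\dots,A_d$ equals $\stiefel$ of the matrix with rows $\overline e_{F_i}$, which is $M$. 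I expect the main obstacle to be the discretization step — making precise and proving that replacing each row $A_i\in\Be M$ by $\overline e_{F_i}$ does not change the \emph{underlying matroid} of the Stiefel image. The subtlety is that a point of $L$ can have $\infty$ in coordinates outside any single flat in a controlled way, and one must verify that the relevant injection/Hall-type condition for a basis $B$ depends only on the pair $(F_i,G_i)$ per row; the cleanest route is probably to argue that for $A_i\in\Be M$, the set of $j$ with $(A_i)_j<\infty$ is a cyclic set of $M$ whose complement is the flat $F_i$ up to the coclosure correction, and then match up matchings directly. Everything else is bookkeeping with relative supports and the Möbius-free reformulation already supplied by \Cref{prop:Bonin}.
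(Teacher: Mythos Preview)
Your reduction to \Cref{prop:Bonin} breaks at two linked places, and the example below shows the break is fatal rather than cosmetic.

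First, the identity you assert for $\sigma_\infty$ is false. You claim that for $A_i\in\Be M$ the set of $\infty$-coordinates of $A_i$ equals $\cocl_M(\relsupp_0(A_i))$, arguing that the $\infty$-support of a point of $\Be M$ is a cyclic set. It is not: the $\infty$-support is merely a \emph{flat} of~$M$ (so that $M/J$ is loopless), and flats need not be cyclic. Concretely, take $M$ of rank~$3$ on $[5]$ with unique nontrivial circuit $\{1,2,3\}$, and $A_1=(1,1,1,0,0)\in\Be M$. Then $F_1=\relsupp_0(A_1)=\{1,2,3\}$ and $\cocl_M(F_1)=\{1,2,3\}$, yet the $\infty$-support of $A_1$ is empty. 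Hence for the cyclic flat $F=\{1,2,3\}$ one has $A_1\notin R_\infty(F,L)$ while $F\subseteq\cocl_M(F_1)$, so $\sigma_\infty(\present,F)\ne\#\{i:F\subseteq\cocl_M(F_i)\}$ in general.

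Second, and for the same reason, the ``discretization'' step fails: the underlying matroid of $\stiefel(A)$ depends only on the $\infty$-pattern of the rows, not on the relative supports $F_i$, and replacing $A_i$ by $\overline e_{F_i}$ enlarges the $\infty$-pattern. In the example above, with $\present=\ldb(1,1,1,0,0),(0,0,0,0,0),(0,0,0,0,0)\rdb$ the matrix has all finite entries, so $\underm{\stiefel(A)}=U_{3,5}\ne M$; thus $\present$ is \emph{not} a presentation of~$M$. Yet your translated test, using $\beta$ built from the $F_i$, reproduces exactly the data of the maximal presentation $\ldb\emptyset,\emptyset,\{1,2,3\}\rdb$ and declares conditions \eqref{eq:beta pos}--\eqref{eq:beta CF} satisfied. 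The proposition itself correctly rejects this $\present$: condition~\eqref{cond2} fails because $\sigma_\infty(\present,\{1,2,3\})=0\ne 1=\cork(\{1,2,3\})$.

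The structural issue is that conditions \eqref{cond1} and~\eqref{cond2} of the proposition involve two genuinely different pieces of data per row --- the relative support $F_i$ for \eqref{cond1} and the $\infty$-support $J_i$ for \eqref{cond2} --- so they cannot be packaged into a single $\beta$ on $\FF(M)$ and fed to \Cref{prop:Bonin}. The paper's proof does not attempt this reduction; it argues directly with the matrix in supportive coordinates, showing that a failure of \eqref{cond1} forces some $\stiefel(A)_B>0$ for a basis $B$, and that a failure of \eqref{cond2} produces a Hall-type obstruction, while the converse uses \eqref{cond1} to guarantee zero-weight matchings on bases and \eqref{cond2} to force $\stiefel(A)_B=\infty$ on nonbases.
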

\begin{proof}
Let $A\in \TT^{d\times n}$ be the matrix whose rows are the supportive coordinates for $A_1,\dots, A_d$ with respect to~0,
so all entries are nonnegative and each row contains a zero.
First we assume that $\ldb A_1,\dots, A_d\rdb$ is a presentation of $M$, that is $\stiefel(A) = M$. 
Let $F\in\FF$ and suppose that condition (\ref{cond1}) is not satisfied for $F$. Let $k=\cork(F)$. Let $B\in \BB(M)$ such that $|F\cap B| = d-k$. There are $k+1$ rows with positive coordinates in all of the columns indexed by $F$. This means that in the square $d\times d$ submatrix given by the columns of $B$, there is a $(k+1)\times (d-k)$ submatrix whose entries are all positive. Then the tropical minor corresponding to $B$ must be positive, which is a contradiction as $\pl MB = 0$.

Now suppose there is a cyclic flat $F\in\CF(M)$ that violates condition (\ref{cond2}). As we already proved condition (\ref{cond1}) is satisfied, we can assume $\sigma_\infty(\present,F) < \cork(F) = k$. 
Then there are $d-k+1$ rows with finite entries in the columns corresponding to $F$. 
Assume there is a matching of the submatrix of $F$ with these rows.
Then any matching of the whole matrix can be used to get a matching that uses the columns of $F$ in all of those $d-k+1$ rows by exchanging the entries. 
This is a contradiction to the rank of~$F$; 
so no such matching exists, and there must be a violation of Hall's condition. 
Let $I$ be the violating subset of rows of size $m$, so that there are at most $m-1$ columns with which elements of $I$ can be matched. 
Let $j$ be one of those columns. Because $F$ is cyclic there should be a matching of $d-k$ rows to $F-j$. So there is a row $i$ corresponding to a point in $R_\infty(F,L)$ which is not used in this matching. Then $I-i$ has access to at most $\leq m-2$ columns of $F-j$, which is a contradiction to the matching. 

We now do the other direction. Assume conditions (\ref{cond1}) and (\ref{cond2}) are satisfied. 
Because $A_i\in L$, we have $\relsupp_0(A_i)\in \FF(M)$. 
Consider the initial matroid $M'= \stiefel(A)^{\mathbf{0}}$, 
that is, the matroid whose bases are given by the entries where $\stiefel (A)$ is $0$. 
This $M'$ is transversal, and Condition (\ref{cond1}) implies that all independent sets in $M$ are also independent sets in~$M'$ (see Lemma 4.4 in \cite{BrualdiDinolt}).
This means that for each $B\in\BB(M)$ there is a matching on the 0 entries of $A$, so that $B\in M'$. 

Now let $B \in \binom{[n]}{d} \setminus \BB(M)$. Then there exists $F\in \CF(M)$ of rank $k$ such that $|B\cap F| > k$. 
By condition (\ref{cond2}) there are $d-k$ rows with infinity entries at the columns of $F$. 
This means that in the square submatrix of $A$ with columns indexed by $B$, 
there is a $(k+1)\times (d-k)$ submatrix with all entries infinity. So $\stiefel(A)_B = \infty$. 
Altogether, this shows $\stiefel (A) = M$.
\end{proof}

\medskip

We now turn our attention to the more general case $L = \Be V$ where $V$ is any valuated matroid. 
When we look at general tropical linear spaces, we have to define the regions $R_0$ and $R_\infty$ more carefully. 
They will now have three parameters: the tropical linear space $L= \Be V$, a point $x\in L$ with finite coordinates and a flat $F\in \FF(M)$ 
such that the relative interior of~$\f LM$ contains $x$. 
Before we define these regions, we provide the following lemma which explains why it still makes sense to take flats as parameters.

\begin{proposition}
\label{prop:flat relsupp}
Let $L= \Be V$ be a tropical linear space, $M\in \MM(V)$ and $x$ be a point in the relative interior of $\f LM$. 
Then $\relsupp_x(y) \in \FF(M)$ for any $y\in L$.
\end{proposition}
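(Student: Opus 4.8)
The plan is to reduce the statement to the already-established fact that relative supports with respect to the origin are flats when one works in the \emph{local} tropical linear space at $x$, which by the discussion after \Cref{prop:fine fan} is precisely $\Be M$. Concretely, fix a vector of affine coordinates for $x$ lying in $L^\circ$, and let $y\in L$. First I would reduce to the case where $y$ has all coordinates finite: if some coordinates of $y$ equal $\infty$, then $y$ lies in $\f{L}{M/A}$ for the set $A$ of those infinite coordinates (using the description of $\Be{V/A}$ recalled at the end of \Cref{sec:background}), and $\relsupp_x(y)$ automatically contains $A$; one then checks that $\relsupp_x(y)$ is a flat of $M$ iff the relative support of the finite part is a flat of $M/A$, and flats of $M/A$ are exactly the flats of $M$ containing $A$, so the reduction is clean. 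So assume $y\in\RR^n/\RR(1,\dots,1)$.

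Next I would use the local structure of $L$ at $x$. For sufficiently small $\varepsilon>0$ the point $x+\varepsilon(y-x)$ lies on the segment from $x$ toward $y$; since $x$ is in the relative interior of $\f{L}{M}$ and $L$ agrees with the translate $\Be M + x$ near $x$, we get $y-x\in\Be M$ (as a set), i.e.\ $(y-x)$, read in affine coordinates, is a point of the Bergman fan $\Be M$. By \Cref{prop:fine fan}, any point of $\Be M^\circ$ has the form $\lambda e_{[n]}+\sum_i a_{F_i}e_{F_i}$ with $a_{F_i}\ge 0$ and the $F_i$ forming a chain of flats of $M$. The coordinates of such a vector where the minimum is \emph{not} attained are exactly the elements of the largest flat $F_s$ in the chain (those coordinates strictly exceeding $\lambda$), which is a flat of $M$. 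Tracing through \Cref{def:relsupp}: writing $z=y-x$ in supportive coordinates with respect to $x$, we have $\relsupp_x(y)=\{j : z_j > \min_k z_k\}$, and this is precisely the top flat $F_s$ of the chain. Hence $\relsupp_x(y)\in\FF(M)$.

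The one subtlety — which I expect to be the main thing needing care rather than a deep obstacle — is handling the closure: $y-x$ need not lie in $\Be M^\circ$ itself but only in its closure $\Be M$, since $y-x$ may have a coordinatewise maximum attained on a set whose complement is an ``infinite face'' of a cone, or $y-x$ may sit on a lower-dimensional cone. To deal with this I would note that $\relsupp_x$ only depends on which coordinates of $y-x$ strictly exceed the minimum, and that for a point in the closure $\Be M$ written as a limit of points in $\Be M^\circ$, the set of non-minimizing coordinates is still of the form $F_s$ for a chain of flats of $M$ — equivalently, invoke directly the defining tropical equations of $L=\Be V$: for $y\in L$ and any circuit-type relation, more than one coordinate attains the minimum, and a short argument shows $\relsupp_x(y)$ is closed under the rank closure of $M$. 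Either route works; I would take whichever is shorter given the exact conventions fixed in \Cref{sec:background}.
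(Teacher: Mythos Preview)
Your main argument has a genuine gap. You assert that for small $\varepsilon>0$ the point $x+\varepsilon(y-x)$ lies in~$L$, and hence (via the local description of $L$ near~$x$) that $y-x\in\Be M$. But tropical linear spaces are not classically convex, so the ordinary segment from $x$ to~$y$ need not stay in~$L$. A concrete instance: take $V=U_{2,3}$, so $L\subseteq\TP^2$ is the standard tropical line; let $x=e_1$, which lies in the relative interior of a ray of~$L$ with $M=V^x$ having bases $\{12,13\}$, and let $y=e_2\in L$. Then $x+\varepsilon(y-x)=(1-\varepsilon,\varepsilon,0)$ has its minimum coordinate attained uniquely at position~$3$, hence is \emph{not} in~$L$ for any $\varepsilon\in(0,1)$. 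Correspondingly $y-x=(-1,1,0)\notin\Be M$, since $\Be M$ consists only of vectors with second and third coordinates equal. The desired conclusion $\relsupp_x(y)=\{2,3\}\in\FF(M)$ is still true here, just not by the route you propose.

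What does work is precisely the alternative you mention at the end as a fix for the ``closure'' subtlety: argue directly from the tropical Pl\"ucker relations. This is not a patch for a side issue but is the whole proof, and it is exactly what the paper does. After translating so that $x=0$ and normalising so that $V_B=0$ iff $B\in\BB(M)$, suppose for contradiction that some $i\in\cl_M(\relsupp_x(y))\setminus\relsupp_x(y)$ exists; choose $B\in\BB(M)$ with $|B\cap\relsupp_x(y)|=\rk_M(\relsupp_x(y))$, so $i\notin B$ and $B\cup\{i\}\setminus\{j\}\notin\BB(M)$ for every $j\in B\setminus\relsupp_x(y)$. The Pl\"ucker relation on $B\cup\{i\}$ then forces the minimum of $V_{B\cup\{i\}\setminus\{j\}}+y_j$ over $j\in B\cup\{i\}$ to be attained twice, but one checks it is attained only at $j=i$ (the term is $0$ there, and strictly positive elsewhere because either $y_j>0$ or $V_{B\cup\{i\}\setminus\{j\}}>0$). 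So discard the segment argument and promote your fallback to the main proof.
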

\begin{proof}
Notice that $x$ being in the relative interior of $L_M$ already implies that $x$ has finite coordinates, so it makes sense to talk about $\relsupp_x(y)$. Without loss of generality we can translate $L$ so that $x$ is the origin. 
In this case, we may assume that $\pl VB = 0$ if and only if $B \in \BB(M)$. 
Now suppose that there exists $y\in L$ such that $\relsupp_x(y) \notin \FF(M)$. This means there is an element $i\in [n] \setminus \relsupp_x(y)$ such that $i \in \cl_M(\relsupp_x(y))$. 
Let $B\in \BB(M)$ be such that $|B\cap \relsupp_x(y)| = \rk_M(\relsupp_x(y))$. 
Then $i\notin B$, and $B\cup \{i\}\setminus \{j\} \notin \BB(M)$ for any $j\in B\setminus\relsupp_x(y)$. 
By the tropical Pl\"ucker equation corresponding to $B\cup \{i\}$, the minimum in 
\[
\min\limits_{B'\cup\{j\} = B\cup\{i\}} \pl V{B'}+y_j
\]
is achieved twice. We have that $\pl VB +y_i = 0$. 
But for any other $B'\cup\{j\}$, if $j\in \relsupp_x(y)$ then $y_j> 0$ and if $j \notin \relsupp_x(y)$ then $\pl V{B'}>0$. 
So the minimum is only attained once, which is a contradiction.
\end{proof} 
Given a tropical linear space $L= \Be V$, a matroid $M\in\MM(V)$, a flat $F\in \FF(M)$ and a point $x\in \relint(\f LM)$, we define two regions, which we will use to constrain the possible position of points in presentations.
Let
\[
R_0(F,x,L) := \{y \in L : F\subseteq \relsupp_x(y)\},
\]
and, whenever $F\in \CF(M)$,
\[
R_\infty(F,x,L) := \bigcap\limits_{y\in \relint \left(L_{M|F\oplus M/F}\right)}R_0(F,y,L).
\]
See \Cref{ex:thmA} for examples of these definitions.


\begin{lemma}
\label{lem:region equiv}
Let $M$ be a matroid. Then 
\begin{enumerate}
	\item $R_0(F,0,\Be M)= R_0(F,\Be M)$
	\item $R_\infty(F,0,\Be M) = R_\infty(F,\Be M)$
\end{enumerate}
where $R_0(F,\Be M)$ and $R_\infty(F,\Be M)$ are the regions defined earlier.
\end{lemma}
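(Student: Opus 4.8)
The plan is to handle the two parts separately; part~(1) is immediate from the definitions, while the content is in part~(2). Throughout set $L=\Be M$. For part~(1), observe that with $x=0$ the general definition $R_0(F,x,L)=\{y\in L:F\subseteq\relsupp_x(y)\}$ specializes verbatim to the earlier definition of $R_0(F,L)$; the only hypotheses to verify, so that the general definition is legitimate here, are that the origin has finite coordinates, that $M\in\MM(M)$, and that $0\in\relint(\f LM)$, which holds because $M^0=M$.

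For part~(2), write $N:=M|F\oplus M/F$; then $\f LN$ is the cell of $L$ corresponding to the face $P_M\cap\{\sum_{j\in F}x_j=\rk_M(F)\}$ of the matroid polytope, and since $M$, hence $N$, is loopless, $\relint(\f LN)$ consists of points with finite coordinates. The inclusion $R_\infty(F,L)\subseteq R_\infty(F,0,L)$ is easy: if $y\in L$ has $y_j=\infty$ for all $j\in F$ and $z\in\relint(\f LN)$, then $y-z$ is $+\infty$ on $F$ but finite somewhere (as $y$ is not the all-$\infty$ point), so $F\subseteq\relsupp_z(y)$; intersecting over all such $z$ gives $y\in R_\infty(F,0,L)$. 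For the reverse inclusion the idea is to intersect over a one-parameter subfamily of $\relint(\f LN)$ rather than the whole cell: take $z_t:=t\,e_F$ for $t>0$. Maximizing $t\,|B\cap F|$ over the bases $B$ of $M$ identifies $M^{z_t}$ as precisely $N$, so $z_t\in\relint(\f LN)$. Then $y\in R_0(F,z_t,L)$ unwinds to the inequality $\min_{j\in F}y_j>t+\min_{j\notin F}y_j$, and requiring this for every $t>0$ forces $\min_{j\in F}y_j=\infty$, i.e.\ $y_j=\infty$ for all $j\in F$ (using that $\min_{j\notin F}y_j<\infty$, again because $y$ is not the all-$\infty$ point). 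Chaining inclusions, $R_\infty(F,L)\subseteq R_\infty(F,0,L)=\bigcap_{z\in\relint(\f LN)}R_0(F,z,L)\subseteq\bigcap_{t>0}R_0(F,z_t,L)=R_\infty(F,L)$, so all are equal.

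Two degenerate cases should be disposed of at the start of part~(2): when $F=[n]$ (which can occur since $M$ is coloopless) both $R_\infty(F,L)$ and every $R_0(F,z,L)$ are empty in $\TP$, because a relative support is never all of~$[n]$ and the all-$\infty$ point is excluded from $\TP$. The only genuinely computational step is the identification $M^{t e_F}=N$; I expect that, together with the bookkeeping over which coordinates equal $\infty$, to be the main obstacle --- though it is routine, and everything else is direct manipulation of relative supports.
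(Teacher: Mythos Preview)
Your proof is correct and follows essentially the same route as the paper: both arguments exploit that $\relint(\f{\Be M}{M|F\oplus M/F})$ is an unbounded cone in the $e_F$ direction, you by explicitly exhibiting the ray $z_t=t\,e_F$ inside it and the paper by invoking that ``$y$ can have arbitrarily large coordinates in $F$''. A couple of cosmetic points: you announce ``two degenerate cases'' but only treat $F=[n]$, and the justification ``$\min_{j\notin F}y_j<\infty$ because $y$ is not all-$\infty$'' is slightly loose (though harmless, since if that minimum were $\infty$ the inequality $\min_{j\in F}y_j>t+\infty$ already fails).
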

\begin{proof}
The first equivalence is straight forward from the definitions of $R_0(F,0,\Be M)$ and $R_0(F,\Be M)$.
To see that $R_\infty(F,0,\Be M) = R_\infty(F,\Be M)$ note that $\relint \left(\f{\Be M}{M|F\oplus M/F}\right) \subseteq R_0(F,\Be M)$, so every $y \in \relint \left(\f{\Be M}{M|F\oplus M/F}\right)$ has positive entries in $F$ when written in supportive coordinates with respect to $0$. 
Any $z\in R_0(F,y,\Be M)$ must have coordinates larger than $y$ in $F$ when written in supportive coordinates with respect to the $0$.
As $\relint \left(\f{\Be M}{M|F\oplus M/F}\right)$ is an open cone, $y$ can have arbitrarily large coordinates in $F$ and any $z\in R_\infty(F,0,\Be M)$ must have infinite entries at $F$, 
so $R_\infty(F,0,\Be M) \subseteq R_\infty(F,\Be M)$. 
But clearly also $R_\infty(F,\Be M)\supseteq R_0(F,y,\Be M)$ for every $y\in \relint \left(\f{\Be M}{M|F\oplus M/F}\right)$, so the equality holds.
\end{proof}

Given a multiset $\present = \ldb A_1,\dots,A_d\rdb$ of $d$ points in $L$ we can define $\sigma$ as in the unsubdivided case. For $x \in \relint(\f LM)$,
\begin{align*}
\sigma_0(\present,F,x) &:= |\{i\in [d] : A_i \in R_0(F,x,L)\}|\\
\sigma_\infty(\present,F,x) &:= |\{i\in [d] : A_i \in R_\infty(F,x,L)\}|
\end{align*}
where $F$ is a flat of~$M$ in the first line, and a cyclic flat of~$M$ in the second.
The following lemma shows that $R_\infty(F,x,L) \subseteq R_0(F,x,L)$ and $\sigma_0(\present,F,x) \ge \sigma_\infty(\present,F,x)$ for every vertex $x$ of $L$.

\begin{lemma}
\label{lemma:r0_containment}
Let $M\in \MM(V)$ be a connected matroid, $F\in \CF(M)$ and $y\in \relint(L_{M|F\oplus M/F})$. Then $R_0(F,y,L) \subseteq R_0(F,\vertex LM,L)$.
\end{lemma}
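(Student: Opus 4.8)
The plan is to reduce the statement to a comparison of two relative supports by exploiting the cone structure of the cells of $L$ near the vertex $\vertex LM$. First I would translate $L$ so that $\vertex LM$ is the origin; since $M$ is connected, $\f LM=\{\vertex LM\}$ is a single point, and locally near the origin $L$ looks like the Bergman fan $\Be M$ (by the local description of tropical linear spaces recalled after \Cref{prop:fine fan}). The cell $L_{M|F\oplus M/F}$ is the cone spanned by $\RR e_{[n]}$ together with the ray $e_F$ inside this Bergman fan, so any $y\in\relint(L_{M|F\oplus M/F})$ has, in supportive coordinates with respect to the origin, $y_j>0$ exactly for $j\in F$. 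Moreover, because $F\in\CF(M)$ the matroid $M|F\oplus M/F$ is the initial matroid $V^y$ at such a $y$, so $\relsupp_{\vertex LM}$ of any point of $L$ is a flat of $M$ and $\relsupp_y$ of any point is a flat of $M|F\oplus M/F$ (by \Cref{prop:flat relsupp}).

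The key step is then: for $z\in R_0(F,y,L)$ I want to show $F\subseteq\relsupp_{\vertex LM}(z)$. By definition $F\subseteq\relsupp_y(z)$, meaning that in supportive coordinates with respect to $y$ we have $z_j>y_j$ for all $j\in F$. Since $y_j>0=y_k$ for $j\in F$ and $k\notin F$ (supportive coordinates w.r.t.\ the origin), I would combine the two coordinate comparisons: $z_j > y_j > 0 = \min_{k\notin F} y_k \ge \min_k z_k$ for $j\in F$ — here the last inequality needs a little care because the two notions of ``supportive coordinates'' differ by a global additive shift, which does not affect which coordinates are strictly above the minimum. Concretely, rescale $z$ so that $\min_k z_k = 0$; then for $k\notin F$ the value $z_k$ is bounded above (it cannot exceed $y_k$ plus the shift, and on $[n]\setminus F$ we have $y_k$ bounded), whereas for $j\in F$ the strict inequality $z_j>y_j$ together with $y_j>0$ forces $z_j$ to lie strictly above the values $z_k$ for $k\notin F$. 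Hence the minimum of the coordinates of $z$ is attained only off $F$, i.e.\ $F\subseteq\relsupp_{\vertex LM}(z)$, which is exactly $z\in R_0(F,\vertex LM,L)$.

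The main obstacle I anticipate is bookkeeping the passage between the two systems of supportive coordinates (with respect to $y$ versus with respect to the origin $\vertex LM$) and making the ``arbitrarily large on $F$'' argument rigorous using that $\relint(L_{M|F\oplus M/F})$ is an open cone — essentially the same subtlety handled in the proof of \Cref{lem:region equiv}. One clean way around this: fix $y$ and note $R_0(F,y,L)\subseteq R_\infty(F,\vertex LM,L)$ would be the cone-scaling statement, but here we only need the weaker containment into $R_0(F,\vertex LM,L)$, so a single fixed $y$ suffices and no limiting argument is required — the strict inequalities $z_j>y_j>0$ on $F$ do all the work. I would therefore present the proof as: translate to the origin, write everything in supportive coordinates with respect to $0$, observe $y_j>0\iff j\in F$, and then the chain of inequalities above yields the containment directly.
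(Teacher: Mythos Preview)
Your overall strategy matches the paper's: translate so that $\vertex LM=0$, express $y$ in supportive coordinates with respect to~$0$, and compare coordinates of~$z$. But there is a genuine gap in your execution.

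The problematic assertion is that ``the cell $L_{M|F\oplus M/F}$ is the cone spanned by $\RR e_{[n]}$ together with the ray $e_F$'' and hence that ``$y_j>0$ exactly for $j\in F$''. This is only correct when $M|F$ and $M/F$ are both connected. In general $L_{M|F\oplus M/F}$ has dimension equal to the number of connected components of $M|F\oplus M/F$ minus one; a point $y$ in its relative interior has the form $\sum_i c_i e_{F_i}$ for a flag $F_1\subset\cdots\subset F_k$ of flats one of which is $F$. So for $k\notin F$ one only gets $y_k\ge 0$, not $y_k=0$. What does survive is the non-strict comparison: for every $j'\in F$ and $j\notin F$ one has $y_{j'}\ge y_j$.

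This breaks your inequality chain at the last step ``$\min_{k\notin F} y_k \ge \min_k z_k$''. In the normalization making $z$ supportive with respect to $y$ you have $z_k\ge y_k$ for all $k$, so in fact $\min_k z_k\ge \min_k y_k = 0$, the inequality pointing the wrong way. Your proposed ``concrete'' fix (``$z_j$ strictly above the values $z_k$ for $k\notin F$'') asserts something stronger than needed and is false in simple examples.

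The repair, which is exactly what the paper does, is to avoid comparing against the global minimum $0$ and instead compare against a single well-chosen index. Pick any $k_1\notin\relsupp_y(z)$; since $F\subseteq\relsupp_y(z)$ we have $k_1\notin F$. Then for $j'\in F$,
\[
(z-\vertex LM)_{j'}-(z-\vertex LM)_{k_1}
=\bigl[(z-y)_{j'}-(z-y)_{k_1}\bigr]+\bigl[(y-\vertex LM)_{j'}-(y-\vertex LM)_{k_1}\bigr]
> 0+0 = 0,
\]
using $(z-y)_{j'}>(z-y)_{k_1}=\min(z-y)$ from $j'\in\relsupp_y(z)$, and $(y-\vertex LM)_{j'}\ge(y-\vertex LM)_{k_1}$ from the flag description of~$y$. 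This already gives $j'\in\relsupp_{\vertex LM}(z)$ for all $j'\in F$, hence $z\in R_0(F,\vertex LM,L)$.
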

\begin{proof}
If $y\in\relint(L_{M|F\oplus M/F})$, 
then $y$ is of the form $\vertex LM+c_1e_{F_1}+\dots+c_ke_{F_k}$ for a flag $F_1\subset\cdots\subset F_k$ containing $F$ and such that $0\le c_i<\infty$ for every $i$;
the $c_i$ are finite because we have excluded faces at infinity from the relative interior.
This is the same form as points have in the cone $\f {\Be M}F$ of the Bergman fan of~$M$. 
This means in particular that for any $j\notin F$ and $j'\in F$
we have $y_j\le y_{j'}$ when written in the supportive coordinates with respect to (fixed coordinates for) $\vertex LM$. 
So if $z\in R_0(F,y,L)$, then there is a $j\notin F$ such that $j\notin \relsupp_y(z)$. 
For every $j'\in F$ it follows that
$(z-y)_{j'}>(z-y)_j$, and $(y-\vertex LM)_{j'} \ge (y-\vertex LM)_j$, so $(z-\vertex LM)_{j'}>(z-\vertex LM)_j$ which means that $z\in R_0(F,\vertex LM,L)$.
\end{proof}

The following definition helps us use the Bergman fan case for the more general setting of tropical linear spaces.
\begin{definition}
Let $L = \Be V$ be a tropical linear space, $M\in \MM(V)$ and $x \in \relint(\f LM)$. The \newword{zoom} map of $L$ to $x$ is the map $Z_x : L \rightarrow \Be M$ such that
\[
Z_x(y)_j := \begin{cases}
0 & \text{ when } j \notin \relsupp_x(y)\\
\infty & \text{ when } j \in \relsupp_x(y)
\end{cases}
\]
\end{definition}

We think of $Z_x$ as `zooming' into $x$, pushing all points of $L$ away from $x$ to infinity in a straight line. Thus, $Z_x(L)$ keeps only local information of $L$ around $x$. 

\begin{proposition}
\label{prop_zoom_pres}
Let $M\in \MM(V)$ be a coloop-free matroid, not necessarily connected, and let $x$ be a point in the relative interior of $\f LM$.
Suppose $\present = \ldb A_1,\dots, A_d\rdb$ is a presentation of $V$. 
Then $Z_x(\present) = \ldb Z_x(A_1),\dots, Z_x(A_d)\rdb$ is a presentation of~$M$,
i.e. $\ldb [n]\setminus \relsupp_x(A_1) \dots \break [n]\setminus \relsupp_x(A_d)\rdb$ is a set system presentation of $M$.
\end{proposition}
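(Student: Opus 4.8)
The plan is to use the zoom map $Z_x$ to transfer the presentation structure from $V$ down to the local matroid $M$, working matrix-entrywise. Let $A\in\TT^{d\times n}$ be the matrix whose rows are the supportive coordinates of $A_1,\dots,A_d$ with respect to fixed affine coordinates for $x$, so that $\stiefel(A)=V$ after the scalar normalization, and by \Cref{lem:containment} each $A_i$ lies in $L=\Be V$. Write $A^Z$ for the $\{0,\infty\}$-matrix whose $i$th row is $Z_x(A_i)$, i.e.\ $A^Z_{i,j}=0$ when $j\notin\relsupp_x(A_i)$ and $A^Z_{i,j}=\infty$ otherwise; since each $A_i$ has at least one coordinate achieving the minimum of $A_i-x$, each row of $A^Z$ contains a zero, so $A^Z$ presents a well-defined $\{0,\infty\}$ matroid $M'=\stiefel(A^Z)$. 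The goal is then exactly $M'=M$, because \Cref{prop:flat relsupp} already guarantees $[n]\setminus\relsupp_x(A_i)\in\FF(M)$, so $Z_x(\present)$ is indeed a multiset of complements of flats of $M$, and $\stiefel(A^Z)=M$ is precisely the assertion that it is a set-system presentation of $M$.

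To prove $M'=M$, I would check the two containments of base sets separately, mirroring the second half of the proof of \Cref{prop_A}. For $\BB(M)\subseteq\BB(M')$: given $B\in\BB(M)$, the Pl\"ucker coordinate $\pl VB$ equals the value selected by $x$ (it is the minimum of $V_B-\sum_{i\in B}x_i$ over all bases, attained exactly on $\BB(M)$), so there is an optimal transversal $j:[d]\to B$ in $A$, meaning $\sum_i(A_{i,j(i)}-x_{j(i)})$ equals that selected minimum. The key local claim is that such an optimal transversal must use, in each row $i$, a column $j(i)\notin\relsupp_x(A_i)$ — equivalently, a column where $A^Z_{i,j(i)}=0$. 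This is where I expect the main work: I would argue that if some row used a column in its relative support, one could perturb $x$ slightly in a direction keeping it in $\relint(\f LM)$ and strictly decrease the transversal value below the value selected by the nearby point, contradicting that $V_B$ is the selected minimum along a full neighborhood of $x$; alternatively, and more cleanly, use the characterization $L^\circ=\{x':V^{x'}\text{ has no loops}\}$ together with the flag description of $\f LM$ from \Cref{lemma:r0_containment} to see directly that moving from $x$ toward $A_i$ decreases $A_{i,j}-x_j$ exactly on $[n]\setminus\relsupp_x(A_i)$. Granting the claim, restricting that transversal to $A^Z$ gives a transversal of $B$ on the zero entries, so $\pl{M'}B=0$, i.e.\ $B\in\BB(M')$.

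For the reverse inclusion $\BB(M')\subseteq\BB(M)$, I would take $B\notin\BB(M)$ and show $\pl{\stiefel(A^Z)}B=\infty$, i.e.\ every function $j:[d]\to B$ hits an $\infty$ entry of $A^Z$, equivalently $j(i)\in\relsupp_x(A_i)$ for some $i$. Here the input is \Cref{lemma:r0_containment} (or the very definition of $R_\infty$) together with the fact that $B\notin\BB(M)$ forces $|B\cap F|>\rk_M(F)$ for some cyclic flat $F\in\CF(M)$; I would show that the rows $i$ with $F\subseteq\relsupp_x(A_i)$, i.e.\ the rows in $R_\infty(F,x,L)$, number at least $\cork_M(F)$, so that the $|B\cap F|\times(\text{those rows})$ submatrix of $A^Z$ is all-$\infty$ and blocks any transversal. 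The count $\sigma_\infty(\present,F,x)\ge\cork_M(F)$ should itself come from applying the rank-$(n-1)$ / facet argument, or from the already-established unvaluated case \Cref{prop_A} applied after a first pass: indeed once $\BB(M)\subseteq\BB(M')$ is known, $M'$ is a transversal matroid with every independent set of $M$ independent, and a corank count forces equality on the cyclic flats. The main obstacle, as flagged, is the ``optimal transversal avoids the relative support'' claim; everything else is bookkeeping with flats, coranks, and Hall's theorem already carried out in \Cref{prop_A}.
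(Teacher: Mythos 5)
Your approach is genuinely different from the paper's: you attempt a direct double containment of base sets ($\BB(M)\subseteq\BB(M')$ and conversely), whereas the paper uses a limiting/continuity argument. In the paper's proof, one first observes that $\Be{\stiefel(A^t)}=t(L-x)$ for the tropical $t$-th power matrix, and that as $t\to\infty$ this converges to the local fan $\Be M$; continuity of $\stiefel$ then forces $\stiefel(Z_x(A))=M$, \emph{provided} $Z_x(A)$ is in the domain of $\stiefel$. Thus the paper reduces the whole proposition to producing a \emph{single} basis $B$ with a zero transversal in the zoomed matrix, which it does via a bipartite graph / alternating path argument from the coloop-freeness of $M$. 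Your route instead requires a zero transversal for \emph{every} $B\in\BB(M)$ together with a separate cyclic-flat count for the converse inclusion, so it does strictly more work.

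The step you flag as the ``main obstacle'' is indeed a real gap, and neither of your two sketches closes it. The claim ``an optimal transversal for $B\in\BB(M)$ must avoid relative supports'' is exactly the claim that $\min_j\sum_i A_{i,j(i)}=0$ in supportive coordinates; since all entries of $A$ are nonnegative and $B$ achieves the minimum over all $d$-sets, this minimum is a constant $c\ge 0$ on $\BB(M)$, but nothing yet forces $c=0$. Your perturbation argument does not supply a contradiction: perturbing $x$ inside $\relint(\f LM)$ changes the supportive coordinates of each $A_i$ in a way that preserves which entries are positive, but the numerical value $c$ moves continuously with no sign constraint imposed, so ``strictly decrease below the selected minimum'' does not produce the needed inconsistency. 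The flag-description alternative likewise tells you where points of $\f LM$ sit relative to the $A_i$, but not that a zero transversal exists. What is actually needed is a Hall's-theorem argument: if $c>0$ then there is an $a\times b$ all-positive submatrix with $a+b>n$, and one must then exploit the coloop-freeness of $M$ (via an alternating-path swap at an optimal matching, as the paper does) to reach a contradiction. Without that, your first containment is unproved; and your corank count $\sigma_\infty(\present,F,x)\geq\cork_M(F)$ for the converse containment is also only asserted, not derived — it would need an analogous argument. So the structure of your proposal is sound and would yield a valid alternative proof, but the central combinatorial lemma is missing in both halves.
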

The corresponding arguments in \cite{FR} are Propositions 5.5 and~5.9.

\begin{proof} 
Let $A\in \TT^{d\times n}$ be the matrix whose $i$\/th row consists of $A_i$ written in supportive coordinates with respect to $x$. 
Notice that the scaling of rows in the matrix $A$ does not change $\stiefel(A)$ and adding the vector $x$ to each of the rows of $A$, changes $\stiefel(A)_B$ by adding $\sum\limits_{j\in B} x_j$. 
This implies that $y\in \Be{\stiefel(A)}$ if and only if $y+x\in L$. 
So we have that $\Be {\stiefel(A)}$ equals $L -x$, the tropical linear space $L$ translated so that $x$ is at the origin.

Tropically exponentiating (i.e.\ classically multiplying) each entry of $A$ by~$t$
transforms $L-x$ by a classical homothety centered at the origin of factor $t$, 
so $\Be {\stiefel(A^t)} = t(L-x)$. 
When $t \rightarrow \infty$, we have that $A^t\rightarrow Z_x(A)$ where $Z_x(A)$ is the matrix where the row $i$ is given by $Z_x(A_i)$. 
Since tropical linear spaces are locally fans, we have that as $t \rightarrow \infty$, $t(L-x)$ goes to the fan with which $L-x$ coincides near the origin.
This is the same fan whose translation by $x$ coincides with $L$ near $x$, namely $\Be M$, since $x \in \relint(\f LM)$.
Because $\stiefel$ is a continuous map in its domain, 
these two limits imply that $\stiefel(Z_x(A)) = \Be M$ as long as $Z_x(A)$ is still in the domain of~$\stiefel$. 
So the only thing left to prove is that this is the case, namely, that there is a set $B$ for which $\pi_B(A) = 0$. 

If there were no maximal minor of $A$ equal to~$0$, 
then there would be an $a\times b$ submatrix $A'$ of $A$ consisting of strictly positive entries such that $a+b>n$. 
Among such matrices $A'$ select one where $b$ is maximal, i.e.\ with the most columns. 
Let $I$ be the set of rows taken by $A'$ and $J$ be the set of columns not taken by $A'$. Notice that $|I| = a > n-b = |J|$. 
Consider a bipartite graph $G$ whose vertices are $I\amalg J$ and containing the edge $(i,j)$ just if $A_{i,j}= 0$. 
If $G$ is disconnected, then there is a connected component with vertices $I'\subseteq I$ and $J'\subseteq J$ with $|I'|>|J'|$. So the submatrix of $A$ given by rows $I$ and columns $[n]\setminus J'$ is strictly positive and has more columns than $A'$, which is a contradiction. So $G$ is connected. 

Let $j\in J$. As $M$ has no coloops, then there is a basis $B\in \BB(M)$ such that $j\notin B$. 
Because $0\in \Be {\stiefel(A)}_M$, then $\stiefel(A)_B$ is minimal among all maximal minors of $A$. The value of $\stiefel(A)_B$ is achieved by a matching $\sigma: B \rightarrow [d]$. 
All matching must use an entry of $A'$, because $a+b>n$ implies that the total number of columns and rows of $A$ not included in $A'$ is less than $d$.
So there is an element $j'\in [n]\setminus J$ such that $\sigma(j') \in I$. 
Let $G'$ be the graph where you add to $G$ the vertex $j'$ and the edge $(\sigma(j'),j')$. 
As $G'$ is connected, then there is a path $G$ from $j'$ to $j$. The matching given by $\sigma$ does not use consecutive edges. 
By replacing each edge used by $\sigma$ in $G$ by the edge that follows it, we get a matching $\sigma'$ from $B-i\cup j$ to $[d]$. 
But the weight of this matching is less than that of $\sigma$ as we replaced a strictly positive entry $A_{\sigma(j'),j'}$ by zero. 
This contradicts the minimality of $\stiefel(A)_B$.
\end{proof}

\begin{example}\label{ex:zoom}
Let $V$ be the valuated matroid of rank $3$ on 5 elements such that $V_{123}= 1$, $V_{145}= \infty$, and $V_B=0$ for any $B\in\binom{[5]}3$ other than these two.
Notice that the rows of the matrix 
\[
A = \begin{pmatrix}
0 &	0 &	0 &	0 &	0 \\
1 &	1 &	1 &	0 &	0 \\
\infty &	0 &	0 &	\infty &	\infty 
\end{pmatrix}
\]
form a presentation of $V$, that is $\pi(A) = V$. Let $x = A_2\in \TT\PP^4$ be the second row of $A$. 
The matroid $V^x$ is such that $\BB(V^x) = \{B\in \binom{5}{3} : 45\not\subset B\}$.
(See also \Cref{fig_dmat}, where the same matroid $V^x$ appears as~$M_2$.)
We have that
\[\relsupp_x(A_1) = 45, \quad  \relsupp_x(A_2) = \emptyset,\quad  \relsupp_x(A_3) = 145.\] 
It is straightforward to check that the collection of flats $\ldb 45,\emptyset, 145\rdb$ satisfy the conditions of \Cref{BrualdiDinolt}, so their complements are a set system presentation of $V^x$.
In other words, the rows of the matrix
\[
Z_x(A) = \begin{pmatrix}
0 &	0 &	0 &	\infty &	\infty \\
0 &	0 &	0 &	0 &	0 \\
\infty &	0 &	0 &	\infty &	\infty 
\end{pmatrix}
\]
form a presentation of $V^x$.
\end{example}

We will need the following lemma.
\begin{lemma}
\label{lemma_zoom}
Let $M\in \MM(V)$ be a coloop-free matroid and let $x\in L_M$ lie in a coloop-free face $M$. 
For $F\in \FF(M)$ we have that
\[Z_x^{-1}(R_\infty(F,0,\Be M)) = R_0(F,x,L).\]
\end{lemma}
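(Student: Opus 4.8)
The statement asks to show that for a coloop-free face $M\in\MM(V)$, a point $x\in\relint(\f LM)$, and a flat $F\in\FF(M)$, the preimage under the zoom map of the Bergman-fan region $R_\infty(F,0,\Be M)$ equals the region $R_0(F,x,L)$. The plan is to prove the two inclusions separately, translating everything into supportive coordinates with respect to (fixed affine coordinates for) $x$ so that, by the discussion following \Cref{prop:fine fan}, $L$ coincides with $\Be M + x$ near~$x$, and $Z_x(y)$ records exactly which coordinates of $y$ strictly exceed those of~$x$.

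\smallskip
\noindent\textbf{Step 1: unwinding the definitions.}
First I would recall that $Z_x(y)$ is, cryptomorphically, the point of $\Be M$ whose infinite coordinates are precisely $\relsupp_x(y)$; by \Cref{prop:flat relsupp}, $\relsupp_x(y)\in\FF(M)$, so $Z_x(y)$ genuinely lies in $\Be M$ (this is where the coloop-free hypothesis on $M$, really on the face, enters — it guarantees $\Be M$ is the local model and that these points of $\Be M$ make sense). Using \Cref{lem:region equiv}, which identifies $R_\infty(F,0,\Be M)$ with the region $R_\infty(F,\Be M)$ defined in \Cref{sec:regions}, I would note that $z\in\Be M$ lies in $R_\infty(F,0,\Be M)$ iff $z_j=\infty$ for all $j\in F$, i.e.\ iff $F\subseteq\relsupp_0(z)$ (relative support of $z\in\Be M$ taken with respect to the origin). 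Hence $y\in Z_x^{-1}(R_\infty(F,0,\Be M))$ iff $F\subseteq\relsupp_0(Z_x(y))=\relsupp_x(y)$, which is literally the defining condition for $y\in R_0(F,x,L)$.

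\smallskip
\noindent\textbf{Step 2: the subtlety.}
If the two conditions were truly identical there would be nothing to prove, so the real content must be in the interplay between the cryptomorphic ``$\{0,\infty\}$'' encoding of $Z_x(y)$ and the region $R_\infty$ as it was actually defined: $R_\infty(F,0,\Be M)$ was defined (in \Cref{sec:regions}) as an intersection $\bigcap_{y'\in\relint(\f{\Be M}{M|F\oplus M/F})} R_0(F,y',\Be M)$, and \Cref{lem:region equiv} only tells us this intersection equals the ``all coordinates in $F$ are $\infty$'' set. So the work is: (i) show that if $F\subseteq\relsupp_x(y)$ then $Z_x(y)$ has all $F$-coordinates equal to $\infty$, hence $Z_x(y)\in R_\infty(F,0,\Be M)$ by \Cref{lem:region equiv}; this is immediate from the definition of $Z_x$. (ii) Conversely, if $Z_x(y)\in R_\infty(F,0,\Be M)$, then in particular (taking any single $y'$ in that relative interior, or directly from \Cref{lem:region equiv}) every coordinate of $Z_x(y)$ indexed by $F$ is $\infty$, which by definition of $Z_x$ forces $F\subseteq\relsupp_x(y)$, i.e.\ $y\in R_0(F,x,L)$. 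Both directions are short once \Cref{lem:region equiv} is invoked; the lemma's role is precisely to replace the unwieldy intersection by the clean ``$=\infty$ on $F$'' description.

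\smallskip
\noindent\textbf{Anticipated main obstacle.}
The genuinely delicate point is making sure the coordinatewise bookkeeping is coherent: $R_0(F,x,L)$ is defined via $\relsupp_x(\cdot)$ inside $L$, while $R_\infty(F,0,\Be M)$ lives inside $\Be M$ and is defined via $\relsupp_0(\cdot)$ there, and the zoom map $Z_x$ is the bridge — but $Z_x$ collapses all positive excesses to $\infty$, so one must check that no information needed for the $F$-membership condition is lost in that collapse. Since membership in both regions depends only on the \emph{set} $\{j:(\,\cdot\,)_j>(\text{reference})_j\}$ and not on the magnitudes, this collapse is harmless, and that observation is the crux. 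I do not expect to need the coloop-free hypothesis beyond guaranteeing that $\Be M$ is the correct local model of $L$ at $x$ (so that $Z_x$ indeed maps into $\Be M$), which was already established in the setup of \Cref{prop_zoom_pres}.
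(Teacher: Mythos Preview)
Your proposal is correct and follows exactly the paper's approach: invoke \Cref{lem:region equiv} to rewrite $R_\infty(F,0,\Be M)$ as the set of points with $\infty$ in every $F$-coordinate, then observe that by the definition of~$Z_x$ this holds for $Z_x(y)$ precisely when $F\subseteq\relsupp_x(y)$, i.e.\ when $y\in R_0(F,x,L)$. Your Step~2 and the ``anticipated obstacle'' discussion are more elaborate than needed---the paper's proof is the three-line argument you already gave in Step~1---but there is no gap.
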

\begin{proof}
A point $y$ satisfies $Z_x(y) \in R_\infty(F,0,\Be M) = R_\infty(F,\Be M)$ if and only if $Z_x(y)_i = \infty$ for every $i\in F$. By definition of the zoom map $Z_x$, this happens if and only if $i\in \relsupp_x(y)$ for every $i\in F$, which is equivalent to $y \in  R_0(F,x,L)$. 
\end{proof}

\begin{proposition}
\label{prop:half B}
Let $\present$ be a presentation of $V$.
Then for any coloop-free matroid $M\in \MM(V)$ and $x\in \relint(\f LM)$
we have that $\sigma_0(\present,F,x) \leq \cork_M(F)$ for $F\in \FF(M)$, with equality if $F\in \CF(M)$.
\end{proposition}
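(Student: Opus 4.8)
The plan is to deduce this from the unvaluated Bergman-fan case, \Cref{prop_A}, by zooming in at~$x$. Write $\present=\ldb A_1,\dots,A_d\rdb$. By \Cref{lem:containment} every $A_i$ lies in $L=\Be V$, so the zoom map $Z_x$ is defined on the elements of~$\present$, and by \Cref{prop_zoom_pres} the multiset $Z_x(\present)=\ldb Z_x(A_1),\dots,Z_x(A_d)\rdb$ is a set system presentation of~$M$; in particular $M$ is transversal, so \Cref{prop_A} applies to it. Note that each $Z_x(A_i)$ is the $\{0,\infty\}$-valued point $\overline e_{\relsupp_x(A_i)}\in\Be M$, where $\relsupp_x(A_i)\in\FF(M)$ by \Cref{prop:flat relsupp}.

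The next step is to match the relevant regions under $Z_x$. For a point of the form $\overline e_J$ with $J\in\FF(M)$ one has $\relsupp_0(\overline e_J)=J$, so $\overline e_J\in R_0(F,\Be M)$ if and only if $F\subseteq J$, and, when $F\in\CF(M)$, likewise $\overline e_J\in R_\infty(F,\Be M)$ if and only if $F\subseteq J$. On the other hand $A_i\in R_0(F,x,L)$ if and only if $F\subseteq\relsupp_x(A_i)$, straight from the definition of $R_0(F,x,L)$. Combining these gives
\[
\sigma_0(\present,F,x)=\sigma_0(Z_x(\present),F)\qquad\text{for every }F\in\FF(M),
\]
and moreover $\sigma_0(Z_x(\present),F)=\sigma_\infty(Z_x(\present),F)$ whenever $F\in\CF(M)$, precisely because all the points of $Z_x(\present)$ are $\{0,\infty\}$-valued.

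To conclude I would apply \Cref{prop_A} to the presentation $Z_x(\present)$ of~$M$: its condition~(1) gives $\sigma_0(Z_x(\present),F)\le\cork_M(F)$ for every $F\in\FF(M)$, and its condition~(2) gives $\sigma_\infty(Z_x(\present),F)=\cork_M(F)$ for every $F\in\CF(M)$. Feeding these into the identifications of the previous paragraph yields $\sigma_0(\present,F,x)\le\cork_M(F)$ in general, with equality when $F\in\CF(M)$.

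I do not expect a serious obstacle here, since the substantive work has already been carried out in \Cref{prop_zoom_pres}. The one thing that needs care is the bookkeeping that transports relative supports taken with respect to~$x$ in $L$ over to relative supports taken with respect to~$0$ in~$\Be M$, together with the small observation that for a $\{0,\infty\}$-valued point the conditions ``$\in R_0(F,\cdot)$'' and ``$\in R_\infty(F,\cdot)$'' coincide; this last point is exactly what promotes the corank inequality to an equality at cyclic flats.
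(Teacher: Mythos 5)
Your proposal is correct and follows essentially the same route as the paper's proof: zoom in at $x$ via \Cref{prop_zoom_pres} to obtain a set system presentation $Z_x(\present)$ of the (necessarily transversal) matroid $M$, and then apply \Cref{prop_A}. The only cosmetic difference is that you re-derive the translation between the regions $R_0(F,x,L)$ and $R_0(F,\Be M)$, $R_\infty(F,\Be M)$ by direct computation on $\{0,\infty\}$-valued points, whereas the paper cites \Cref{lemma_zoom} (which packages precisely this observation as $Z_x^{-1}(R_\infty(F,0,\Be M)) = R_0(F,x,L)$); the content is the same.
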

\begin{proof}
By \Cref{prop_zoom_pres} we have that $Z_x(\present)$ is a presentation of $\Be M$. Then by \Cref{prop_A} there are at most $\cork_M(F)$ elements of $Z_x(\present)$ in $R_0(F,0,\Be M)$. By \Cref{lemma_zoom},
\[Z_x(R_0(F,x,L)) \subseteq R_\infty(F,0,\Be M) \subseteq R_0(F,0,\Be M)\]
so there are at most $\cork_M(F)$ elements of $\present$ in $R_0(F,x,L)$. If $F\in\CF(M)$ then there are exactly $\cork_M(F)$ elements of  $Z_x(\present)$ in $R_\infty(F,0,\Be M)$ so there are exactly $\cork_M(F)$ elements of $\present$ in $R_0(F,x,L)$.
\end{proof}

\begin{theorem}
\label{theorem_A}
Let $L = \Be V$ be a tropical linear space and $A_1,\ldots, A_d \in L$. Then $\present = \ldb A_1,\dots, A_d\rdb$ is a presentation of $L$ if and only if for every connected matroid $M\in\MM(V)$ the following hold:
\begin{enumerate}\renewcommand{\labelenumi}{\textup{(\theenumi)}}
	\item $\sigma_0(\present,F,\vertex LM) \leq \cork_M(F)$ for all $F\in \FF(M)$; and \label{c1}
	\item $\sigma_\infty(\present, F,\vertex LM) = \cork_M(F)$ for all $F\in \CF(M)$. \label{c2}
\end{enumerate}
\end{theorem}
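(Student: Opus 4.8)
The plan is to prove the two directions separately, using \Cref{prop:half B} for necessity and the already-settled $\{0,\infty\}$-valued case (\Cref{prop_A}, or equivalently \Cref{prop:Bonin}) for sufficiency, with the zoom maps $Z_{\vertex LM}$ translating between a tropical linear space and its Bergman-fan germ at a vertex. Throughout I would assume $V$ connected, so that for connected $M\in\MM(V)$ the cell $\f LM$ is the single point $\vertex LM$ with finite coordinates, and $L$ has at least one vertex.

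For necessity, suppose $\present$ presents $V$. A connected $M\in\MM(V)$ is coloop-free with $\relint(\f LM)=\{\vertex LM\}$, so \Cref{prop:half B} immediately gives (\ref{c1}) together with $\sigma_\infty(\present,F,\vertex LM)\le\sigma_0(\present,F,\vertex LM)=\cork_M(F)$ for $F\in\CF(M)$, the last inequality by \Cref{lemma:r0_containment}. For the reverse inequality in (\ref{c2}) I would introduce $M':=M|F\oplus M/F$: since $F$ is cyclic in $M$ the restriction $M|F$ is coloop-free, $M/F$ is coloop-free (as $M$ is) and loopless (as $F$ is a flat), and $P_{M'}$ is a face of the cell $P_M$, hence a cell of the subdivision, so $M'\in\MM(V)$ is coloop-free with $\vertex LM$ a vertex of the edge $\f L{M'}$; moreover $F\in\CF(M')$ and $\cork_{M'}(F)=\cork_M(F)$. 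Applying \Cref{prop:half B} to $M'$ at an arbitrary $y\in\relint(\f L{M'})$ gives $\sigma_0(\present,F,y)=\cork_M(F)$, while \Cref{lemma:r0_containment} gives $R_0(F,y,L)\subseteq R_0(F,\vertex LM,L)$; two nested sets meeting $\present$ in the same number $\cork_M(F)$ of points contain the same points, so intersecting over all such $y$ shows $R_\infty(F,\vertex LM,L)$ contains those $\cork_M(F)$ points, i.e.\ equality holds in (\ref{c2}).

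For sufficiency, assume (\ref{c1}) and (\ref{c2}). First I would show that for every connected $M\in\MM(V)$ the set system $Z_{\vertex LM}(\present)=\ldb[n]\setminus\relsupp_{\vertex LM}(A_i):i\in[d]\rdb$ presents $M$: by \Cref{prop:flat relsupp} the function $\beta(G):=|\{i:\relsupp_{\vertex LM}(A_i)=G\}|$ is nonnegative on $\FF(M)$ with $\sum_{G\supseteq F}\beta(G)=\sigma_0(\present,F,\vertex LM)$; condition (\ref{c1}) is inequality \eqref{eq:beta F}, and for $F\in\CF(M)$, chaining $\sigma_\infty\le\sigma_0\le\cork_M(F)$ with (\ref{c2}) forces $\sigma_0(\present,F,\vertex LM)=\cork_M(F)$, which is \eqref{eq:beta CF}, so \Cref{prop:Bonin} applies. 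Next I would check that $A$ lies in the domain of $\stiefel$: an all-$\infty$ block of $A$ in rows $I$, columns $T$ with $|I|+|T|=n+1$ would give $\cl_M(T)\subseteq\relsupp_{\vertex LM}(A_i)$ for all $i\in I$, hence $\sigma_0(\present,\cl_M(T),\vertex LM)\ge|I|$, whereas $\cork_M(\cl_M(T))=d-\rk_M(T)<|I|$ by submodularity of $\rk_M$, contradicting (\ref{c1}); so $W:=\stiefel(A)$ is a valuated matroid of rank $d$.

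The remaining, and hardest, step is the local-to-global passage: deduce $W=V$ from the fact that $Z_v(\present)$ presents $V^v$ at every vertex $v$ of $L$. Running the degeneration argument of \Cref{prop_zoom_pres} in reverse shows that for each such $v$ the cone $\lim_{t\to\infty}t(\Be W-v)$ equals $\Be{\stiefel(Z_v(A))}=\Be{V^v}$, which forces $v\in\Be W$ and identifies the germ of $\Be W$ at $v$ with the germ of $L$ at $v$; one then needs that a connected tropical linear space is determined by its vertices together with its germs there. This is where I expect the real work to lie, and where the apparatus of \Cref{sec:matroid valuations} on matroid valuations is needed — alternatively one could induct on the number of maximal cells of the subdivision induced by $V$, with \Cref{prop_A} as the base case and a cyclic-flat degeneration (splitting $\present$ into presentations of $V|F$ and $V/F$, recombining block-diagonally, then dispersing off the corresponding facet of $P_V$) as the inductive step. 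The two naive shortcuts, ``$L\subseteq\Be{\stiefel(A)}$'' and ``$\stiefel(A)_B\ge V_B$'', both fail already for rank-$2$ valuated matroids on four elements, so the gluing cannot be avoided.
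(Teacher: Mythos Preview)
Your necessity argument and the first stage of sufficiency (showing $Z_{\vertex LM}(\present)$ presents $M$ for each connected $M\in\MM(V)$) match the paper. The gap you flag in the local-to-global passage is real, but the way to close it is far simpler than you propose: no matroid valuations and no induction on cells are needed. The machinery of \Cref{sec:matroid valuations} is used only for \Cref{thm_B}, not here; your germ-matching route could perhaps be made to work, but it is an unnecessary detour.

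The missing argument is a direct computation of $\stiefel(A)_B$ in two cases. For $B\in\BB(\underm V)$, choose a connected $M\in\MM(V)$ with $B\in\BB(M)$ and write the rows of $A$ in supportive coordinates with respect to $\vertex LM$: all entries are then nonnegative, and since $Z_{\vertex LM}(\present)$ presents $M$ there is a zero-weight matching for the columns of $B$, so the minor is $0$ in these coordinates. Translating back gives $\stiefel(A)_B=\sum_{j\in B}(\vertex LM)_j$ up to a row-scaling constant, which equals $V_B$ up to a constant depending only on $M$ because $M=V^{\vertex LM}$. Hence $\stiefel(A)_B-\stiefel(A)_{B'}=V_B-V_{B'}$ whenever $B,B'\in\BB(M)$, and connectivity of the incidence graph of edges and maximal cells in the subdivision of $P_{\underm V}$ propagates this to a single global scalar across all bases. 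For $B\notin\BB(\underm V)$, choose a facet of $P_{\underm V}$ whose inequality $\sum_{j\in F}x_j\le\rk(F)$ is violated by $e_B$, and a maximal (hence connected) cell $P_M$ containing that facet; then $F\in\CF(M)$ with $|B\cap F|>\rk_M(F)$. Because $P_{M|F\oplus M/F}$ lies on the boundary of $P_{\underm V}$, the cell $\f L{M|F\oplus M/F}$ extends to infinity in direction $e_F$, so every point of $R_\infty(F,\vertex LM,L)$ has all $F$-coordinates equal to $\infty$. Condition (\ref{c2}) then places $\cork_M(F)$ rows of $A$ there, leaving at most $\rk_M(F)$ rows with a finite entry in any column of $F$; since $|B\cap F|>\rk_M(F)$, Hall's condition fails for the columns $B$ and $\stiefel(A)_B=\infty$.
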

\begin{proof}
Let $A$ be a presentation of a tropical linear space $L$. Applying \Cref{prop:half B} for every vertex $\vertex LM$ of $L$ gives us condition \eqref{c1}. 
For any connected matroid $M$ and every $F\in \CF(M)$, by \Cref{lemma_zoom} we have that there are exactly $\cork(F)$ elements of $A$ in $R_0(F,\vertex LM,L) = Z_{\vertex LM}^{-1}(R_\infty(F,0,\Be M))$. 
If condition (\ref{c2}) is not satisfied, it means that one of those points is in $R_0(F,\vertex LM,L) \setminus R_\infty(F,\vertex LM,L)$. Let $A_i$ be that point. 

Then there exists $y\in \f L{M|F\oplus M/F}$ such that $A_i \notin R_0(F,y,L)$. 
From $F\in\CF(M)$ we see that $M|F\oplus M/F$ is coloop-free and $F\in \CF(M|F\oplus F)$, 
so by \Cref{prop:half B} we have that $\cork_{M/F\oplus M|F}(F)=\sigma_0(\present,F,y)$. Notice also that $\cork_M(F) = \cork_{M|F\oplus M/F}(F)$. However by \Cref{lemma:r0_containment} we have that $R_0(F,y,L)\subseteq R_0(F,\vertex LM,L)$ so
\begin{align*}
\sigma_0(\present,F,y) &\le \sigma_0(\present, F,\vertex LM)-1\\
 &= \cork_M(F)-1\\
 &= \cork_{M/F\oplus M|F}(F)-1\\	
 &= \sigma_0(\present,F,y)-1
\end{align*}
which is a contradiction.

Conversely, suppose $\present$ satisfies conditions (\ref{c1}) and (\ref{c2}). 
Let $A$ be the matrix which has $\present$ as its rows, so what we have to prove is that $\stiefel(A)=V$. 
For any connected matroid $M$, we have that $Z_{\vertex LM}(\present)$ satisfies (\ref{c1}) and (\ref{c2}) for $\Be M$, so it is a presentation of $\Be M$.
By adding $\vertex LM$ to each element of $Z_{\vertex LM}(\present)$ we get a presentation of $\Be M + \vertex LM$.
The matrix we obtain by concatenating all of these presentations
coincides in its finite entries with $A$. 
As the finite Pl\"ucker coordinates of $\Be M+\vertex LM$ agree with $V$ up to adding a scalar, 
the difference between any pair of Pl\"ucker coordinates of $\stiefel(A)$ both  indexed by elements of~$\BB(M)$
has the value called for by~$V$.
Because the incidence graph of edges and maximal cells in~$P_V$ is connected,
we conclude that all finite Pl\"ucker coordinates of $\stiefel(A)$ agree with~$V$ up to a single global scalar.

Let $B$ be a nonbasis of~$\underm V$. Consider a facet $Q$ of $P_{\underm V}$ such that $e_B$ fails to satisfy its defining inequality. 
Let $P_M$ be one of the maximal cells of~$P_V$ which have a facet contained in~$Q$,
and let $F$ be the cyclic flat that defines that facet. 
Then $|B\cap F| > \rk_M(F)$. As the polytope of $P_{M/F\oplus M|F}$ is in the boundary of~$P_{\underm V}$, 
we have $\sup\{z_j : z\in \f L{M/F\oplus M|F}\} = \infty$ for all $j\in F$.
This implies that points in $R_\infty(F,\vertex LM,L)$ have $\infty$ entries in the coordinates corresponding to~$F$.
Because of (\ref{c2}) for $M$ and $F$, there are $\cork(F)$ elements of $\present$ in $R_\infty(F,\vertex LM,L)$. 
So at most $\rk_M(F)$ of the rows of~$A$ 
contain a finite entry in a column indexed by $B\cap F$. 
This is a violation of Hall's condition, so there is no matching for $B$ using finite entries of $A$. So $\stiefel(A)_B= \infty$.
\end{proof}

\begin{example}
\label{ex:thmA}
Consider the tropical linear space $L = \Be V$ from \Cref{ex1}. 
There are two connected matroids in $\MM(V)$, namely
$M_1$ whose vertex in $L$ is $v_1 = [0:0:0:0]$ with bases $\BB(M_1) = \binom{4}{2}\setminus\{34\}$ 
and $M_2$ whose vertex in $L$ is $v_2 = [0:0:1:1]$ with bases $\BB(M_2) = \binom{4}{2}\setminus\{12\}$. 
Since $R_0(\emptyset,x,L) = R_\infty(\emptyset,x,L) = L$,
the conditions imposed by \Cref{theorem_A} for $F= \emptyset$ are trivial. 
We name the 4 rays in $L$:
\begin{align*}
L_1 &:= \{[a:0:0:0] : a\ge 0\} & L_2 &:= \{[0:a:0:0] : a\ge 0\} \\
L_3 &:= \{[0:0:a:1] : a\ge 1\} & L_4 &:= \{[0:0:1:a] : a\ge 1\}.
\end{align*}
We have
\begin{align*}
R_0(1,x_1,L) &= L_1 &R_0(2,x_1,L) &= L_2 \\
R_0(34,x_1,L) &= L\setminus (L_1\cup L_2) &R_\infty(34,x_1,L) &= L_3\cup L_4 \\
R_0(3,x_2,L) &= L_3 &R_0(4,x_2,L) &= L_4 \\
R_0(12,x_2,L) &= L\setminus (L_3\cup L_4)  &R_\infty(12,x_2,L) &= L_1\cup L_2 .
\end{align*}

Condition (\ref{c2}) of \Cref{theorem_A} says that any presentation has exactly one point in $L_1\cup L_2$ (the blue region in \Cref{fig_ex1}) and exactly one point in $L_3\cup L_4$ (the red region in \Cref{fig_ex1}), just as we said in \Cref{ex1}. 
Condition (\ref{c1}) says that there is at most one point in $L_i$ for every $i\in[4]$, and at most one point in $L\setminus (L_1\cup L_2)$ and in $L\setminus (L_3\cup L_4)$, but in this case this follows from condition (\ref{c2}).
\end{example}

\medskip

We end this section by using the previous theorem to understand how presentations behave under contractions.
\begin{proposition}
\label{prop_contractions}
Let $\AAA$ be a presentation of $V$ and $F\in \CF(\underm V)$ a cyclic flat of rank $k$. 
Then there are exactly $d-k$ points in $\AAA$ all of whose coordinates indexed by elements of $F$ are $\infty$. 
The projection of these points to the $[n]\setminus F$ coordinates form a presentation of $V/F$.
\end{proposition}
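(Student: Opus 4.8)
The plan is to deduce \Cref{prop_contractions} from \Cref{theorem_A} together with the description of $\Be{V/F}$ recalled at the end of \Cref{sec:background}. First I would handle the counting claim. Since $F$ is a cyclic flat of $\underm V$, there is at least one connected matroid $M\in\MM(V)$ (coming from a maximal cell of $P_V$ with facet the one cut out by $F$; if $\underm V$ itself is connected one may take $M=\underm V$) with $F\in\CF(M)$; condition~(\ref{c2}) of \Cref{theorem_A} applied to this $M$ and this $F$ says that exactly $\cork_M(F)=d-\rk_M(F)=d-k$ of the points $A_i$ lie in $R_\infty(F,\vertex LM,L)$. It remains to check that membership in $R_\infty(F,\vertex LM,L)$ is exactly the condition ``all coordinates indexed by $F$ equal $\infty$'': one inclusion is the observation used in the last paragraph of the proof of \Cref{theorem_A} (since $P_{M|F\oplus M/F}$ lies in the boundary of $P_{\underm V}$, the coordinates in $F$ are unbounded on $\f L{M|F\oplus M/F}$, so any point of $R_\infty$ has $\infty$ there); the reverse inclusion is immediate from \Cref{lem:containment} — a point of $L$ whose $F$-coordinates are all $\infty$ has, in supportive coordinates with respect to any $y\in\relint(\f L{M|F\oplus M/F})$, strictly larger entries on $F$ than $y$ does, hence lies in every $R_0(F,y,L)$, hence in the intersection $R_\infty(F,\vertex LM,L)$.

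Next I would identify these $d-k$ points with a presentation of $V/F$. Recall from \cite{Frenk} (quoted in \Cref{sec:background}) that $\Be{V/F}=\{x\in\TT\PP^{|[n]\setminus F|-1}:\hat x\in L\}$, where $\hat x$ fills in $\infty$ on the $F$-coordinates; equivalently, $\Be{V/F}$ is obtained from the face $R_\infty(F,\vertex LM,L)$ of $L$ (the locus of points of $L$ with $\infty$ on $F$) by deleting the $F$-coordinates. So the $d-k$ selected points, after projecting away the $F$-coordinates, do land in $\Be{V/F}$, which is the necessary containment in order to apply \Cref{theorem_A} to $V/F$.

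To verify conditions (\ref{c1}) and (\ref{c2}) for these $d-k$ points and $V/F$, I would relate the combinatorics of $V/F$ to that of $V$. A connected matroid $M'\in\MM(V/F)$ corresponds, via the contraction formula, to a matroid $M\in\MM(V)$ with $M'=M/F$ and $F\in\CF(M)$ (we may assume $F\subseteq$ the set of loops-avoided in $M$; here the hypothesis $F\in\CF(\underm V)$ is what lets one lift initial matroids of $V/F$ to initial matroids of $V$ containing $F$ as a cyclic flat). The vertex of $M'$ in $\Be{V/F}$ is $\vertex LM$ with the $F$-coordinates deleted. Under this correspondence, flats of $M'=M/F$ are exactly the sets $G\setminus F$ for $G\in\FF(M)$ with $G\supseteq F$, cyclic flats of $M'$ likewise correspond to cyclic flats $G\supseteq F$ of $M$, and coranks are preserved: $\cork_{M/F}(G\setminus F)=\cork_M(G)$. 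Moreover, for one of the $d-k$ selected points $A_i$ (so all its $F$-coordinates are $\infty$), the relative support $\relsupp_{\vertex LM}(A_i)$ automatically contains $F$, and deleting the $F$-coordinates turns the condition ``$A_i\in R_0(G,\vertex LM,L)$'' into ``(image of $A_i$)$\in R_0(G\setminus F,(\text{vertex of }M'),\Be{V/F})$'', and similarly for $R_\infty$; hence $\sigma_0$ and $\sigma_\infty$ for $V/F$ at $M'$ are literally the restrictions of those for $V$ at $M$ to flats/cyclic flats containing $F$. The inequalities and equalities of \Cref{theorem_A} for $V$ at $M$ therefore restrict to exactly those needed for $V/F$ at $M'$, and conversely every connected $M'\in\MM(V/F)$ is hit this way, so \Cref{theorem_A} shows the projected points form a presentation of $V/F$.

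The main obstacle is the bookkeeping in the last paragraph: pinning down precisely the correspondence between connected initial matroids of $V/F$ and connected initial matroids $M$ of $V$ with $F$ a cyclic flat of $M$, checking that vertices, the regions $R_0$, $R_\infty$, and the corank function all transport correctly under contraction by $F$, and making sure no initial matroid of $V/F$ is missed (which is where $F\in\CF(\underm V)$, as opposed to merely $F\in\CF(M)$ for one $M$, is genuinely used). Once that dictionary is set up, both directions of \Cref{theorem_A} transfer verbatim, so the argument is essentially formal.
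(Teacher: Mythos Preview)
Your proposal is essentially the paper's own argument: derive the count from condition~(\ref{c2}) of \Cref{theorem_A} using that the cells indexed by $M|F\oplus M/F$ extend to infinity in the $e_F$ direction (since $F\in\CF(\underm V)$ places $P_{M|F\oplus M/F}$ in the boundary of $P_{\underm V}$), and then check the hypotheses of \Cref{theorem_A} for $V/F$ by transporting regions, flats, and coranks through the contraction.

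The one place where your sketch needs adjustment is the lift of a connected $M'\in\MM(V/F)$. You speak of lifting to a \emph{connected} $M\in\MM(V)$ with $M'=M/F$ and $F\in\CF(M)$, and then working at the vertex $\vertex LM$; but such a connected lift need not exist. The natural lift is a coloop-free $M\in\MM(V)$ with $P_M\subseteq H_F$, which is necessarily disconnected (it equals $M|F\oplus M/F$). The paper handles this by working at an arbitrary point $x\in \f LM$ whose $F$-coordinates are taken large (so that it projects to the desired $x'\in\f{\Be{V/F}}{M'}$), and then verifying
\[
R_0(F',x,L)\cap\{y_j=\infty:j\in F\}=\iota_F\bigl(R_0(F'\setminus F,x',\Be{V/F})\bigr)
\]
for flats $F'\supseteq F$. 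In effect this uses \Cref{prop:half B} (valid for all coloop-free initial matroids and all points in their cells) to supply the needed input, rather than the vertex-based statement of \Cref{theorem_A}. Once you make that substitution, your dictionary of flats, cyclic flats, coranks, and regions goes through exactly as you describe.
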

\begin{proof}
As $F\in \CF(\underm V)$, there are coloop-free matroids in $\MM(V)$ such that their polytopes are contained in the hyperplane 
\[H_F :=\left\{\sum\limits_{j\in F}x_j = k\right\}.\] 
Condition (\ref{cond2}) of \Cref{theorem_A} applied to any of these matroids 
implies that there are exactly $d-k$ points of~$\AAA$ with $\infty$ in the $F$ coordinates, 
because the cells of $L$ corresponding to these cells 
extend to infinity in the $e_F$ direction. Let $\AAA_F\subseteq \AAA$ be the multiset of those points. 

For every coloop-free matroid in $M' \in \MM(V/F)$ there is a coloop-free matroid $M\in\MM(V)$ such that $M/F=M'$ and $P_M\subseteq H_F$. In particular, $F\in\CF(M)$.
For every point $x'\in \Be {V/F}_{M'}$ there is a point $x\in L_M$ which coincides with $x'$ in the $[n]/F$ coordinates and is arbitrarily large in the $F$ coordinates. For such points and for any flat $F\subseteq F'\in \FF(M)$ we have that 
\[
R_0(F',x,L)\cap\left\{y_j=\infty : j\in F\right\}= \iota_F(R_0(F',x',\Be{V/F}))
\]
where $\iota_F$ again means the inclusion $\Be{V/F} \to L$ 
which sets the $F$ coordinates to~$\infty$. 
As the lattice of flats of $M'$ is isomorphic to the interval above~$F$ in lattice of flats of~$M$, 
the conditions that \Cref{theorem_A} imposes on $\AAA_F$ when applied to~$V$
are exactly the same as its conditions for presentations of $V/F$.
\end{proof}

%
%

\section{Matroid valuations}\label{sec:matroid valuations}
We will make use of the notion of matroid valuation, not to be confused with valuated matroids. 
This unfortunate similitude in names comes from the word ``valuation'' having pre-existing use in two different areas,
respectively measure theory and algebra.

Given a polyhedron $P\subseteq\mathbb R^n$, 
let $\mathbf{1}(P):\mathbb R^n\to\mathbb Z$ be its indicator function,
defined by 
\[\mathbf{1}(P)(x)=\begin{cases}
1 & x\in P \\
0 & x\not\in P.
\end{cases}\]
\begin{definition}
Let $G$ be an abelian group, and $f$ a function of a matroid taking values in~$G$.
We say that $f$ is a (\/\emph{matroid}\/) \emph{valuation} if, whenever $M_1,\ldots,M_k$ are matroids and $c_1,\ldots,c_k$ integers such that
\begin{equation}\label{eq:indicator function relation}
\sum_{i=1}^k c_i\,\mathbf{1}(P_{M_i}) = 0,
\end{equation}
it also holds that
\[\sum_{i=1}^k c_i\,f(M_i) = 0.\]
\end{definition}
For a general reference on matroid valuations, see \cite{DF}.
We recount a few basic properties here.
First, linear combinations of matroid valuations are again matroid valuations.

\begin{example}\label{ex:subdivision linear relation}
Suppose a matroid polytope $P_M$ has a subdivision 
into a collection of other matroid polytopes $Q_1,\ldots,Q_k$:
e.g.\ the regular subdivision of a valuated matroid defined in Section~\ref{ssec:valuated} is of this form.
Then by inclusion-exclusion,
\[\mathbf{1}(P_M) + \sum_{K\subseteq[k],K\neq\emptyset}(-1)^{|K|}\,\mathbf{1}\left(\bigcap_{k\in K}Q_k\right) = 0.\]
Each nonempty intersection $\bigcap_{k\in K}Q_k$ is a matroid polytope,
so discarding the terms with empty intersection gives a relation of form \eqref{eq:indicator function relation}.
Therefore such a subdivision of $P_M$ provides an ``inclusion-exclusion'' linear relation
that a matroid valuation must satisfy.
\end{example}

\begin{lemma}\label{lemma:cXr}
Let $X:X_0\subseteq\cdots\subseteq X_k$ be a chain of subsets of~$[n]$, and $r:r_0\leq\cdots\leq r_k$ nonnegative integers.
Let $c_{X,r}$ be the $\{0,1\}$-valued matroid function
which takes value 1 on~$M$ if each $X_i$ is a cyclic flat of~$M$ with $\rk_M(X_i) = r_i$ and 0 otherwise.
Then $c_{X,r}$ is a matroid valuation.
\end{lemma}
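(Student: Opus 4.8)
The plan is to show that $c_{X,r}$ can be expressed as an integer combination of simpler valuations whose valuative nature is already known, or else to verify the defining property directly via a factorization of the indicator-function problem. The most robust route is the direct one: I would show that the condition ``each $X_i$ is a cyclic flat of $M$ with $\rk_M(X_i)=r_i$'' can be read off from the matroid polytope $P_M$ in a way that is itself a valuation. Concretely, recall (from the facts recalled in \Cref{ssec:valuated}) that for a chain $X_0\subseteq\cdots\subseteq X_k$ the intersection $P_M\cap\bigcap_{i=0}^k\{x:\sum_{j\in X_i}x_j=r_i\}$ is either empty or equals the matroid polytope of the matroid $N$ obtained by iteratively restricting/contracting along the flag; and this face has full expected dimension (equivalently, this chain of set-equalities cuts out a face of the predicted codimension) precisely when each $X_i$ is a flat of $M$ of rank $r_i$. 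Dually, using that cyclic sets are complements of flats of $M^*$, the same flag being a chain of \emph{cyclic} flats of the required ranks is detected by the analogous face on the $M^*$ side, i.e.\ by the face of $P_M$ cut out by $\sum_{j\in X_i}x_j=r_i$ \emph{together with} $\sum_{j\notin X_i}x_j=(\rk M)-r_i$ behaving correctly — but the latter equation is automatic, so cyclicity is the extra open condition that $X_i$ not be a coloop-containing flat, equivalently that the face $P_M\cap\{\sum_{j\in X_i}x_j=r_i\}$ be a facet (or lower) meeting the interior of the hypersimplex.

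First I would make precise the following ``local'' combinatorial lemma: for a fixed flag $X$ and fixed ranks $r$, the function sending $M$ to $\mathbf 1$ of the polyhedron $F_{X,r}(M):=P_M\cap\bigcap_i\{x:\sum_{j\in X_i}x_j=r_i\}$ (a face of $P_M$, possibly empty) is a valuation in the strong sense that any indicator relation $\sum_i c_i\mathbf 1(P_{M_i})=0$ restricts, by intersecting with the affine subspace $\bigcap_i\{\sum_{j\in X_i}x_j=r_i\}$, to an indicator relation $\sum_i c_i\mathbf 1(F_{X,r}(M_i))=0$ among the faces. This is immediate because intersecting with a fixed affine subspace is linear on indicator functions. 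Then I would argue that whether $c_{X,r}(M)=1$ is determined by the pair $(\dim F_{X,r}(M),\ \text{number of connected components of the matroid }F_{X,r}(M)\text{ represents})$ — both of which are Euler-characteristic-type invariants extractable valuatively from $\mathbf 1(F_{X,r}(M))$ via well-known valuations (dimension is read off from the $f$-vector / Ehrhart leading term, connectivity from the recession behaviour). More cleanly: among matroids $M$ for which $F_{X,r}(M)\neq\emptyset$, the matroid $N$ represented by that face is $\bigoplus_{i} (M|X_i)/X_{i-1}$ together with $M/X_k$, and $c_{X,r}(M)=1$ iff this face equals the matroid polytope of the \emph{uniform-rank-correct} direct sum, which happens iff $\dim F_{X,r}(M)$ achieves its maximum possible value $n-k-1$ minus the appropriate correction; so $c_{X,r}=$ (a fixed integer linear functional of the $f$-vector valuation of $F_{X,r}(\,\cdot\,)$), hence a valuation.

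The cleanest packaging, and the one I would actually write, is to reduce to the known result that the ``Schubert matroid'' corank indicator functions, or equivalently the functions $M\mapsto\mathbf 1[X\subseteq[n]\text{ is a cyclic flat of }M\text{ of rank }r]$ for a \emph{single} set $X$, are valuations (this is in \cite{DF}; it follows from writing the single-set condition as a signed sum over intervals of the ``is a flat of rank $\le r$ and cyclic'' down-sets, each of which is a face-count). Granting the single-set case, the chain case follows by the face-restriction trick above: apply the single-set statement to the matroid $F_{X_0,r_0}(M)/\cdots$ represented by successive faces, formalizing that ``$X_{i+1}/X_i$ is a cyclic flat of the contracted/restricted matroid of the correct rank'' is, face by face, a single-set condition, and composing these valuations (each obtained by restricting an indicator relation to a fixed affine slice, which preserves relations) yields $c_{X,r}$. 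I expect the main obstacle to be bookkeeping: making the ``matroid represented by the face'' assignment $M\mapsto F_{X,r}(M)$ genuinely compatible with indicator relations when some of the $M_i$ have $F_{X,r}(M_i)=\emptyset$ (these simply contribute $0$, which is consistent), and correctly handling the edge cases where a purported flag element coincides with $\emptyset$ or $[n]$ or where $r_i=r_{i-1}$ forcing $X_i=X_{i-1}$; none of these is deep, but stating the reduction so that no circularity sneaks in (we only use the \emph{single-set} valuation property, already established in the literature) is where care is needed.
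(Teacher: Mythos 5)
Your route is genuinely different from the paper's (which expands $c_{X,r}$ by inclusion--exclusion into the chain-rank valuations $s_{X',r'}$ of \cite{DF} and then cancels terms so that the remaining index sets form chains), but it has real gaps. The first is the claimed geometric identity: the slice $P_M\cap\bigcap_i\{x:\sum_{j\in X_i}x_j=r_i\}$ is \emph{not} in general the matroid polytope of a flag minor, nor even a face of $P_M$. The hyperplane $\{\sum_{j\in X}x_j=r\}$ is a supporting hyperplane precisely when $r=\rk_M(X)$ (giving the face $P_{M|X\oplus M/X}$); for $r<\rk_M(X)$ it can cut through the interior. Since deciding whether $r_i=\rk_M(X_i)$ is exactly what $c_{X,r}$ must detect, you cannot assume it. Your subsequent plan to read the answer off dimension and connectivity of the slice is therefore built on a misidentification of what the slice is, and as stated it would also conflate ``$X_i$ is a flat of the right rank'' with the strictly stronger ``$X_i$ is a \emph{cyclic} flat of the right rank''.

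The deeper gap is in the ``cleanest packaging''. Even granting the single-set cyclic-flat valuation (which, incidentally, is not stated in \cite{DF} in the form you need --- the paper derives it as an intermediate step by dualizing an inclusion--exclusion for flats), your iteration produces $c_{X,r}(M)$ as a \emph{product} of $\{0,1\}$-valued valuations of $M$: one factor for $X_0$ applied to $M$, the next for $X_1$ applied to the face matroid, and so on. Products of matroid valuations are not in general valuations (a subdivision relation $f(M)-f(M_1)-f(M_2)+f(M_0)=0$ does not survive multiplication by another valuation), so composing single-set conditions through faces does not by itself yield a valuation. The paper's proof has a specific mechanism that replaces this step: it expands each single-set factor into $s$-terms and shows, via a term-cancellation argument exploiting submodularity, that the surviving products of $s$-terms have index sets forming a single chain and hence collapse to single $s$-terms. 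That collapse is the technical heart of the lemma and is exactly what your proposal is missing; the ``bookkeeping'' you flag at the end is where this real obstruction lives.
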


\begin{proof}
The matroid function $s_{X,r}$ which takes value 1 on~$M$ if $\rk_M(X_i) = r_i$ for each $i$, and 0 otherwise,
is known to be a matroid valuation \cite[Proposition 5.3]{DF}.
So to prove the lemma it will suffice to write $c_{X,r}$ as a linear combination of functions $s_{X',r'}$.

A set $J$ is a cyclic flat of~$M$ if and only if there is no $j\in[n]\setminus J$ such that 
$\rk(J\cup\{j\})=\rk(J)$ and no $j\in J$ such that $\rk(J\setminus\{j\})=\rk(J)-1$.
If $K\supseteq J$, then the assertion $\rk(K)=\rk(J)$ 
is equivalent to $\rk(J\cup\{k\})=\rk(J)$ for each $k\in K\setminus J$.
Therefore the indicator function of the predication
``$J$ is a flat of rank~$r$'', i.e.\
``$\rk(J)=r$ and there is no $j\in[n]\setminus J$ such that $\rk(J\cup\{j\})=r$'',
can be written by inclusion-exclusion as
\[\sum_{K\supseteq J}(-1)^{|K\setminus J|} s_{(J,K),(r,r)}.\]
Repeating the same argument in the dual allows $c_{(J),(r)}$ 
(where the two indices are lists of length one)
to be written as an alternating sum of terms $s_{(I,J,K),(r-|J|+|I|,r,r)}$.
We thus have
\begin{align}
c_{X,r}(M) &= \prod_{i=0}^k c_{(X_i),(r_i)}(M) \notag
\\&= \sum\prod_{i=0}^k(-1)^{|K_i\setminus I_i|} \, s_{(I_i,X_i,K_i),(r_i-|X_i|+|I_i|,r_i,r_i)}(M) \label{eq:c-s}
\end{align}
where the sum is over choices of sets $I_i\subseteq X_i$ and $K_i\supseteq X_i$ for each~$i$.

Submodularity implies that if $\rk(K)=\rk(J)$ for some $K\subseteq J$,
then also $\rk(K\cup L)=\rk(J\cup L)$ for every $L$ disjoint from~$K$.
Therefore, for any term of \eqref{eq:c-s} in which $K_i\not\subseteq X_{i+1}$ for some $i<k$, 
with $j\in X_{i+1}\setminus K_i$,
inserting $j$ into or removing it from~$K_k$ gives another term which is equal with opposite sign.
So we may cancel these terms, and by repeating the argument in the dual
we may impose on the index set of the sum \eqref{eq:c-s} the further conditions
$K_i\subseteq X_{i+1}$ and $I_i\supseteq X_{i-1}$.
We have furthermore that any term with $K_i\not\subseteq I_{i+1}$ is zero,
because if $j\in K_i\setminus I_{i+1}$, submodularity is violated at $X_i\cup\{j\}$ and $X_{i+1}\setminus\{j\}$.
Thus we can impose the condition $K_i\subseteq I_{i+1}$ on \eqref{eq:c-s} as well.
Under this condition all the sets in the indices form a single chain and we have
\begin{align*}
&\mathrel{\phantom=}\prod_{i=0}^ks_{(I_i,X_i,K_i),(r_i-|X_i|+|I_i|,r_i,r_i)}(M)
\\&= s_{(I_0,X_0,K_0,I_1,\ldots,K_k),(r_0-|X_0|+|I_0|,\ldots,r_k)}(M)
\end{align*}
which is a valuation.  It follows that $c_{X,r}(M)$ is a valuation.
\end{proof}

Recall the function $\tau$ defined in \Cref{eq:definition of tau}.
\begin{lemma}
\label{lemma_val}
The function $M\mapsto\tau_M(\emptyset)$ is a matroid valuation.
\end{lemma}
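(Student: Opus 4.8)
The plan is to expand the Möbius function in~\eqref{eq:definition of tau} by Philip Hall's chain formula and then recognise the resulting terms as $\ZZ$-linear combinations of the valuations supplied by \Cref{lemma:cXr} and the rank valuations of~\cite[Proposition~5.3]{DF}. (If $M$ has a loop then $\emptyset\notin\CF(M)$ and $\tau_M(\emptyset)$ is read as~$0$, extending $\tau$ by zero to non-cyclic flats as remarked after \Cref{prop:Bonin}; this degenerate case will be subsumed automatically below, so for intuition take $M$ loopless, so that $\emptyset$ is the bottom element of $\CF(M)$.) By Philip Hall's formula for the Möbius function, for each $F\in\CF(M)$ the value $\mu(\emptyset,F)$ equals $\sum_{i\ge0}(-1)^i c_i(F)$, where $c_i(F)$ is the number of chains $\emptyset=F_0\subsetneq F_1\subsetneq\cdots\subsetneq F_i=F$ in the lattice $\CF(M)$. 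Substituting this into~\eqref{eq:definition of tau} and reindexing by the underlying chain $X\colon\emptyset=X_0\subsetneq X_1\subsetneq\cdots\subsetneq X_i\subseteq[n]$ of subsets and its sequence of ranks $r\colon 0=r_0\le r_1\le\cdots\le r_i$, I would obtain the finite identity
\[
\tau_M(\emptyset)=\sum_{i\ge0}(-1)^i\sum_{\substack{\emptyset=X_0\subsetneq\cdots\subsetneq X_i\subseteq[n]\\0=r_0\le\cdots\le r_i}}c_{X,r}(M)\,\cork_M(X_i),
\]
valid for \emph{every} matroid $M$ on~$[n]$, where $c_{X,r}$ is as in \Cref{lemma:cXr}.

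To see that the identity holds, fix $M$ and note that for a fixed chain $X$ of subsets, $\sum_r c_{X,r}(M)$ equals $1$ when $X_0,\dots,X_i$ are all cyclic flats of~$M$ (hence a chain in $\CF(M)$) and $0$ otherwise, because at most one rank sequence can then be realised. Grouping the chains by their top element $X_i=F$ therefore recovers $\sum_{F\in\CF(M)}\bigl(\sum_i(-1)^ic_i(F)\bigr)\cork_M(F)=\sum_F\mu(\emptyset,F)\cork_M(F)=\tau_M(\emptyset)$; when $M$ has a loop every term vanishes, since no chain can start at the non-cyclic flat $\emptyset$, matching the convention. The number of chains of subsets of $[n]$ and of admissible rank sequences is finite, so since matroid valuations are closed under $\ZZ$-linear combinations it suffices to prove that for each fixed $(X,r)$ the function $M\mapsto c_{X,r}(M)\,\cork_M(X_i)$ is a matroid valuation.

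For this I would write $\cork_M(X_i)=\rk_M([n])-\rk_M(X_i)$ and note that $c_{X,r}(M)\ne0$ forces $\rk_M(X_i)=r_i$, so $c_{X,r}(M)\,\cork_M(X_i)=c_{X,r}(M)\,\rk_M([n])-r_i\,c_{X,r}(M)$. The term $r_i\,c_{X,r}(M)$ is an integer multiple of the valuation $c_{X,r}$ of \Cref{lemma:cXr}. For the term $c_{X,r}(M)\,\rk_M([n])$ I would re-enter the proof of \Cref{lemma:cXr}: that proof writes $c_{X,r}$ explicitly as a $\ZZ$-linear combination $\sum_t\varepsilon_t\,s_{Z^{(t)},q^{(t)}}$ of the rank valuations $s$ of~\cite[Proposition~5.3]{DF}, where each $Z^{(t)}$ is a (weak) chain of subsets of~$[n]$. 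Appending the set $[n]$, which contains every member of every such chain, with a prescribed rank $p$ at the end of each $Z^{(t)}$ produces $s_{(Z^{(t)},[n]),(q^{(t)},p)}=s_{Z^{(t)},q^{(t)}}\cdot\mathbf 1[\rk_M([n])=p]$, again a rank valuation; summing over~$t$ shows $M\mapsto c_{X,r}(M)\cdot\mathbf 1[\rk_M([n])=p]$ is a valuation for each $p$, and hence so is $c_{X,r}(M)\,\rk_M([n])=\sum_{p=0}^{n}p\cdot c_{X,r}(M)\,\mathbf 1[\rk_M([n])=p]$. This finishes the proof.

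The one genuine subtlety, which I expect to be the main obstacle, is exactly this last manoeuvre: since matroid valuations are not closed under multiplication, the extra factor $\cork_M(X_i)$ (equivalently $\rk_M([n])$) cannot simply be multiplied onto the valuation $c_{X,r}$, and one must instead carry the marked element $[n]$, together with its rank, through the inclusion--exclusion and submodularity cancellations appearing in the proof of \Cref{lemma:cXr}. Everything else---Philip Hall's formula, the reindexing, the finiteness of the sum, and the minor bookkeeping around weak chains (a repeated $[n]$, or the case $X_i=[n]$) and around matroids with loops---is routine.
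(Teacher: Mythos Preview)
Your proposal is correct and takes the same approach as the paper: expand $\mu$ by Philip Hall's chain formula and express $\tau_M(\emptyset)$ as a $\ZZ$-linear combination of the valuations $c_{X,r}$ of \Cref{lemma:cXr}. The paper absorbs the factor $\cork_M(X_k)$ more simply, writing the coefficient of $c_{X,r}$ as $(d-r_k)(-1)^k$ with $d=\rk_M([n])$ treated as a constant; this is legitimate because any relation $\sum_i c_i\,\mathbf 1(P_{M_i})=0$ among indicator functions involves only matroids of a single rank (matroid polytopes of distinct ranks lie in disjoint affine hyperplanes $\sum_j x_j=d$), so your detour through appending $([n],p)$ to the chains inside the proof of \Cref{lemma:cXr}, while correct, is unnecessary once this observation is made.
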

\begin{proof}
By Philip Hall's theorem, the M\"obius function $\mu(\emptyset,F')$ is a sum over the chains of cyclic flats from $\emptyset$ to $F'$ in $\CF$,
with a chain of length $i$ weighted $(-1)^i$.
Therefore $\mu(\emptyset,F')\cork(F')$ can be written as a linear combination of the $c_{X,r}$ running over all chains of sets $X=(X_0 = \emptyset,\ldots, X_k=F')$ and all tuples $r=(r_0,\ldots,r_k)$, the coefficient of $c_{X,r}$ being $(d-r_k)(-1)^k$.
By \Cref{lemma:cXr}, we conclude that $M\mapsto\tau_M(\emptyset)$ is a valuation.
\end{proof}

\section{The presentation space}\label{sec:presentations}

The goal of this section is to describe the set of all presentations of a given valuated matroid~$V$ (\Cref{thm_B}).
The techniques of the proof will give us further results such as \Cref{coro_transv}, the converse of \Cref{prop:FR local}:
if all facets of a regular subdivision correspond to transversal matroids,
then the subdivision defines a transversal valuated matroid.

\subsection{Distinguished matroids and apices}
We say that $V$ \emph{has transversal facets}
if all of its facets $P_V$ correspond to polytopes of transversal matroids.
So \Cref{prop:FR local} says that transversal valuated matroids have transversal facets.
Define
\[
\overline{\MM}(V) := \bigcup\limits_{F\in \CF(\underm V)}\MM(V/F).
\]
All of the matroids in this set index cells of~$P_V$.

\begin{definition}
Let $V$ be a valuated matroid with transversal facets.
The \newword{distinguished multiset of matroids} $\dmat(V)$ of $V$ contains
each matroid $M\in \overline{\MM}(V)$ with multiplicity $\tau_M(\emptyset)$. 
For any connected matroid $M\in\MM(V/F)$ with $F\in\CF(\underm V)$,
let $\vertex LM \in L=\Be V$ be the point in~$\TP$
whose coordinate vector extends $\vertex{\Be{V/F}}M$ by setting the coordinates corresponding to~$F$ to be $\infty$.  
The \newword{distinguished multiset of apices} $\dapx(L)$ of $L$ consists of $\vertex LM$ for every $M\in\dmat(V)$, with the same multiplicities.
\end{definition}
%

If $V$ has transversal facets, 
then all elements of $\overline{\MM}(V)$ are transversal, because contraction of cyclic flats preserves transversality.
To see this, notice that if $F\in \CF(M)$ the cyclic flats of $M/F$ are exactly sets of the form $S-F$ where $S$ is a cyclic flat of $M$ containing $F$. So if $\present$ is the maximal presentation of $M$, the multiset of all elements of $\present$ that are disjoint of $F$ is the maximal presentation of $M/F$ by \Cref{prop:Bonin}.
Therefore $\tau_M(\emptyset)$ only takes non-negative values for any $M\in \overline{\MM}(V)$.

\begin{proposition}
Let $V$ be a valuated matroid of rank $d$ with transversal facets. Then $|\dmat(V)| = d$.
\end{proposition}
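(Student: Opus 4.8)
The plan is to compute the total multiplicity $\sum_{M\in\dmat(V)}1 = \sum_{F\in\CF(\underm V)}\sum_{M\in\MM(V/F)}\tau_M(\emptyset)$ by recognizing the inner sum as an evaluation of a matroid valuation on the polytope subdivision of $P_{V/F}$. Indeed, for a fixed $F\in\CF(\underm V)$, the matroids $M\in\MM(V/F)$ whose polytopes are the maximal cells of the subdivision of $P_{V/F}$ induced by $V/F$ give an inclusion-exclusion relation among indicator functions (as in \Cref{ex:subdivision linear relation}); but $\MM(V/F)$ also contains lower-dimensional faces, so I would instead first reduce to maximal cells. Since every matroid in $\MM(V/F)$ that is not a maximal cell of the subdivision is a proper face of one, and since $\tau_M(\emptyset)$ is additive-under-subdivision in the appropriate sense, it is cleanest to argue directly: $M\mapsto\tau_M(\emptyset)$ is a matroid valuation by \Cref{lemma_val}, so $\sum_{M}(-1)^{\dim P_{V/F}-\dim P_M}\tau_M(\emptyset)$, summed over \emph{all} faces $M$ of the subdivision, telescopes, and what survives is $\tau_{V/F}(\emptyset)$ plus boundary contributions that I would need to handle.

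A cleaner route avoiding the face-counting bookkeeping: use the fact (to be verified) that for any valuated matroid $W$ with underlying matroid $N$, one has $\sum_{M\in\MM(W)}\tau_M(\emptyset)$ equal to a sum over cyclic flats of $N$ of terms involving the structure of $W$ restricted/contracted at those flats, and that applying this recursively over $\CF(\underm V)$ collapses to a single global identity. Concretely, I expect the key identity to be $|\dmat(V)| = \tau_{\underm V}^{\mathrm{global}}$ in a sense that unwinds to $\cork_{\underm V}(\emptyset) = d - \rk(\emptyset) = d$, because $\tau$ is designed precisely so that corank decomposes along the lattice of cyclic flats via Möbius inversion (\Cref{eq:definition of tau}), and summing $\tau$-values over a matroid subdivision of a matroid polytope recovers the corank of the whole. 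So the heart of the proof is: (i) for a single valuated matroid $W$ with loopless connected-enough underlying matroid, $\sum_{M\in\MM(W),\ M\text{ maximal cell}}\tau_M(\emptyset) = \cork_{\underm W}(\emptyset)$, proved by expanding $\tau$ via $\mu$ and $\cork$, invoking \Cref{lemma:cXr} and \Cref{lemma_val} to move the valuation across the inclusion-exclusion relation of \Cref{ex:subdivision linear relation}; and (ii) assembling these over $F\in\CF(\underm V)$ using that the cyclic flats of $\underm{V/F}$ are the cyclic flats of $\underm V$ above $F$, so the double sum reorganizes into $\sum_{F\in\CF(\underm V)}\mu(\ldots)\cork(\ldots)$-type telescoping that yields $d$.

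More precisely, I would set $L=\Be V$, and for each $F\in\CF(\underm V)$ let $M_1^F,\ldots$ be the maximal cells of the subdivision of $P_{V/F}$. By \Cref{ex:subdivision linear relation} the indicator functions of $P_{V/F}$ and of these cells (together with their common faces) satisfy a linear relation of the form \eqref{eq:indicator function relation}; applying the valuation $M\mapsto\tau_M(\emptyset)$ from \Cref{lemma_val} gives $\tau_{V/F\text{-underlying}}(\emptyset) = \sum_j (\pm)\,\tau_{M_j^F}(\emptyset) + (\text{internal face terms})$. The internal-face terms cancel in pairs or vanish because $\tau_M(\emptyset)=0$ whenever $M$ is disconnected (as $\emptyset$ is then not meet-irreducible appropriately) — I would need to pin down this vanishing carefully; it should follow since for a disconnected matroid $\cork$ decomposes and the Möbius sum in \eqref{eq:definition of tau} factors and cancels. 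Granting that, $\sum_{M\in\dmat(V)}1 = \sum_{F\in\CF(\underm V)}\tau_{\underm{V/F}}(\emptyset)$, and since $\tau_{\underm{V/F}}(\emptyset)=\tau_{\underm V}(F)$ by definition (the recursion \eqref{eq:definition of tau} for $\underm V$ at $F$ only sees cyclic flats above $F$, which are exactly those of $\underm{V/F}$), we get $\sum_{F\in\CF(\underm V)}\tau_{\underm V}(F) = \sum_F\sum_{F'\geq F}\mu(F,F')\cork(F') = \cork_{\underm V}(\emptyset) = d$ by Möbius inversion over $\CF(\underm V)$ (every $F'$ contributes $\cork(F')\sum_{F\leq F'}\mu(F,F')$, which is $0$ unless $F'=\emptyset$... wait — I need the inversion in the other direction, summing over the \emph{lower} variable, so the surviving term is $F'=\hat 1=[n]$ giving $\cork([n])=0$; the correct bookkeeping is that $\sum_F\tau(F)$ telescopes to $\cork$ of the \emph{bottom} cyclic flat, which for a loopless matroid is $\emptyset$, hence $d$). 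The main obstacle, then, is the vanishing of the internal-face contributions — i.e.\ proving $\tau_M(\emptyset)=0$ for disconnected $M$ and checking the signs in the inclusion-exclusion match so the lower-dimensional faces all cancel — together with the base-case claim (i) that $\sum_{\text{maximal }M}\tau_M(\emptyset)=\cork(\emptyset)$; everything else is formal.
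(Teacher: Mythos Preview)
Your approach is essentially the paper's: use that $M\mapsto\tau_M(\emptyset)$ is a valuation (\Cref{lemma_val}), apply the inclusion--exclusion relation of \Cref{ex:subdivision linear relation} to the subdivision of $P_{\underm{V/F}}$, invoke $\tau_M(\emptyset)=0$ for disconnected $M$, identify $\tau_{\underm{V/F}}(\emptyset)=\tau_{\underm V}(F)$, and sum over $F\in\CF(\underm V)$ to get~$d$.

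One clarification on the step you flag as the main obstacle. There is no delicate cancellation or sign-matching to check. The point is simply that since $\tau_M(\emptyset)=0$ for every disconnected $M$, and every non-maximal face of the subdivision corresponds to a disconnected matroid, you may \emph{freely replace} the coefficient $1$ on each $M\in\MM(V/F)$ in the defining sum $N(F)=\sum_M\tau_M(\emptyset)$ by the inclusion--exclusion coefficient $-\sum_{K:\bigcap_k Q_k=P_M}(-1)^{|K|}$. For connected $M$ (the maximal cells $Q_k$) this coefficient is already $1$; for disconnected $M$ the term vanishes regardless. The modified sum is then literally the valuation relation, giving $N(F)=\tau_{\underm{V/F}}(\emptyset)$ immediately. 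Your justification of the vanishing (that for $M=M_1\oplus M_2$ the M\"obius sum factors and $\sum_{F'}\mu(\emptyset,F')=0$ on each nontrivial factor) is correct. Your final M\"obius computation $\sum_F\tau_{\underm V}(F)=\cork_{\underm V}(\emptyset)=d$ is also correct; the paper phrases the same fact as ``the number of distinguished cyclic flats of~$\underm V$ is exactly~$d$''.
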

\begin{proof}
Let us write $N(F)$ for the total number of matroids from $\MM(V/F)$ that appear in $\dmat(L)$, counted with multiplicities:
\[N(F) := \sum_{M\in\MM(V/F)}\tau_M(\emptyset).\]
If $M$ is disconnected then $\tau_M(\emptyset)=0$.  So we may freely change the coefficient of disconnected matroids in the above sum.  In particular
\begin{equation}\label{eq:N(F)}
N(F) = -\sum_{M\in\MM(V/F)}
\left(\sum_{K\in\mathcal K(M)} (-1)^{|K|}\right)
\tau_M(\emptyset)
\end{equation}
where $Q_1,\ldots,Q_k$ are the polytopes of the connected matroids in $\MM(V/F)$, and
\[\mathcal K(M) := \{K\subseteq[k] : \bigcap_{k\in K}Q_K = P_M\}.\]
The key fact being used is that if $P_M$ equals some $Q_k$ then $\mathcal K(M)=\{\{k\}\}$.
Equation \eqref{eq:N(F)} gives a case of \Cref{ex:subdivision linear relation}
which we may apply \Cref{lemma_val} to and conclude that $N(F) = \tau_{\underm V/F}(\emptyset)$.

To finish, if $F$ is a distinguished cyclic flat of~$\underm V$,
we observe that $\tau_{\underm V/F}(\emptyset) = \tau_{\underm V}(F)$,
which is its multiplicity as a distinguished cyclic flat of~$\underm V$.
So the total number of distinguished matroids of $V$, counted with multiplicity, equals the number of distinguished cyclic flats of $\underm V$, which is exactly $d$.
\end{proof}
\begin{definition}
\label{def:presentation_fan}
Let $M$ be a transversal matroid and let $t=\tau_M(\emptyset)$. The \newword{presentation fan} $\phi_M$ of~$M$
consists of all tuples of points $(p_1,\dots,p_t)\in \Be M^t$ such that $\relsupp_{0}(p_i)$ are independent flats and there is a presentation $\AAA= \ldb A_1,\dots,A_d\rdb$ of $M$ such that $A_i = [n]\setminus \relsupp_{0}(p_i)$ for $i\in[t]$. 
If $V$ is a valuated matroid with transversal facets and $L= \Be V$, then for every $M\in \dmat(V)$ we define 
\[\phi_L(M) := \phi(M)+\vertex LM\]
Finally we define the \emph{presentation space} $\Pi(L)$ of $L$ to be the orbit of 
\[
\prod\limits_{M\in \dmat(V)} \phi_L(M)
\]
under the action of $S_d$ by permuting points.
\end{definition}

In the product $\phi_L(M)$ is only taken once, regardless of the multiplicity of $M$ in $\dmat(V)$;
multiplicities are already accounted for in the definition of $\phi(M)$.
Notice that $\phi(M)$ and therefore $\phi_L(M)$ are invariant under the $S_t$ action, and $\Pi(L)$ is invariant under the $S_d$ action. 

\begin{example}
Recall the valuated matroid $V$ from \Cref{ex1,ex1.2,ex:1,ex:thmA} with connected matroids $M_1,M_2\in \MM(V)$. 
We have that $\DF(M_1) = \ldb \emptyset, \{3,4\}\rdb$ and $\DF(M_2) = \ldb \emptyset, \{1,2\}\rdb$ so $\tau(M_1) = \tau(M_2) = 1$ and $\dmat(V) = \ldb M_1, M_2\rdb$. 
The distinguished apices are $\dapx(L) = \ldb \vertex L{M_1}, \vertex L{M_2}\rdb = \ldb[0:0:0:0], [0:0:1:1]\rdb$. 
The presentation fan $\phi(M_1)$ consists of two rays, one in direction $e_1$ and the other in direction $e_2$ while $\phi(M_2)$ has its rays going in direction $e_3$ and $e_4$. \Cref{fig_ex1} shows $\phi_L(M_1)$ in blue and $\phi_L(M_2)$ in red. 
The presentation space $\Pi(L)$ consists of the $S_2$ orbit of the product of these fans: in other words, 
\[
\Pi(L) = \phi_L(M_1)\times \phi_L(M_2)\cup \phi_L(M_2)\times \phi_L(M_1).
\]
\end{example}

\begin{example}
The uniform matroid $M= U_{d,n}$ is the unique rank~$d$ matroid such that $\tau_M(\emptyset) = d$. 
The presentation fan of the uniform matroid is an $S_d$-invariant subset of $\TT^{d\times n}$ 
where $(A_1,\dots,A_d)\in \phi(U_{d,n})$ if and only if for every non-empty subset $I\subseteq [d]$,
\[
\left|\bigcap_{i\in I} \relsupp_0(A_i)\right| \leq d-|I|.
\]
The support of the $\{0,\infty\}$-vectors within $\phi(U_{d,n})$ give the set system presentations from \Cref{ex:uniform}.
\end{example}

The reason for calling $\Pi(L)$ a presentation space is the following theorem.
\begin{theorem}
\label{thm_B}
Let $V$ be a transversal valuated matroid. Then $\AAA = \ldb A_1,\dots, A_d\rdb$ is a presentation of $V$ if and only if $(A_1,\dots,A_d)\in\Pi(\Be V)$. 
\end{theorem}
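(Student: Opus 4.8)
The plan is to combine the two halves already available—the region-based characterization of \Cref{theorem_A}, and the description of set-system presentations via $\tau$ and the distinguished family—using the zoom map to transfer between the global picture in $L=\Be V$ and the local Bergman fans at the distinguished apices. First I would record the easy direction of the translation: given a tuple $(A_1,\dots,A_d)\in\Pi(L)$, by definition of $\Pi(L)$ each $A_i$ lies in some $\phi_L(M)=\phi(M)+\vertex LM$ with $M\in\dmat(V)$, so the multiset $\AAA=\ldb A_1,\dots,A_d\rdb$ is grouped into blocks, one block of size $t=\tau_M(\emptyset)$ per distinguished matroid $M$ (after contracting the relevant cyclic flat $F$, if $M\in\MM(V/F)$). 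The goal is to check that conditions~(\ref{c1}) and~(\ref{c2}) of \Cref{theorem_A} hold for every connected $N\in\MM(V)$, and then invoke that theorem.

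The main step is a counting/bookkeeping argument at each connected $N\in\MM(V)$ with vertex $\vertex LN$: one must show that $\sigma_0(\AAA,G,\vertex LN)=\sum_i[A_i\in R_0(G,\vertex LN,L)]$ equals the number of sets among the complements $[n]\setminus\relsupp_{\vertex LN}(A_i)$ that contain $G$, and that this matches $\cork_N(G)$ with equality on cyclic flats. Here I would use \Cref{lemma_zoom} and the zoom map $Z_{\vertex LN}$ to reduce to the Bergman fan $\Be N$, where by \Cref{prop_zoom_pres} the relevant images should form a set-system presentation of $N$; then \Cref{prop_A} gives exactly conditions~(\ref{c1}) and~(\ref{c2}). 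The content is verifying that the blocks coming from $\dmat(V)$, when zoomed at $\vertex LN$, really do assemble into the distinguished family $\DF(N)$ (after discarding points that zoom away to a face at infinity, i.e.\ points whose relative support is not a flat of $N$, which by \Cref{prop:flat relsupp} cannot occur). The valuation identity $N(F)=\tau_{\underm V/F}(\emptyset)=\tau_{\underm V}(F)$ proved just before \Cref{def:presentation_fan}, together with \Cref{lemma_val} applied locally at $N$, is what guarantees the multiplicities come out right: the total mass $\sum_{\cocl_N(G)=F}(\text{multiplicity of }G)$ equals $\tau_N(F)$, matching \Cref{BrualdiDinolt}.

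For the converse, suppose $\AAA$ is a presentation of $V$. By \Cref{lem:containment} each $A_i\in L$, and by \Cref{theorem_A} conditions~(\ref{c1}) and~(\ref{c2}) hold at every connected $N\in\MM(V)$. I would first use \Cref{prop_contractions}: for each distinguished cyclic flat $F$ of $\underm V$ there are exactly $\cork(F)=\tau_{\underm V}(F)$ points of $\AAA$ with $\infty$ in all $F$-coordinates, and their projections to $[n]\setminus F$ form a presentation of $V/F$; inducting on $F$ (or peeling off cyclic flats from the top) lets me assume $\underm V$ is connected, so that the only distinguished matroids are the connected $M\in\MM(V)$ and all distinguished apices are genuine vertices $\vertex LM$. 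Then for a fixed point $A_i$ I must locate which block it belongs to: the idea is that the relative supports $\relsupp_{\vertex LM}(A_i)$ across all connected $M$, together with \Cref{prop_zoom_pres} (giving a set-system presentation of each $\Be M$) and the uniqueness statement in \Cref{BrualdiDinolt}, force each $A_i$ to sit in $\phi_L(M)+\vertex LM$ for a \emph{unique} $M$—this is where the hypothesis $\relsupp_0(p_i)$ independent in the definition of $\phi_M$ gets used, and where one checks $A_i\in\Be M$ locally via the local structure of $L$ near $\vertex LM$ described in \Cref{sec:background}. Assembling these assignments gives the desired element of $\prod_{M}\phi_L(M)$, up to the $S_d$ relabeling, i.e.\ a point of $\Pi(L)$.

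I expect the main obstacle to be the converse direction's step of showing that each point of a presentation is assigned to \emph{one} distinguished matroid consistently—i.e.\ that the local data at the various vertices $\vertex LM$ glues into a single block decomposition—and that the assigned point genuinely lies in the (global, closed) cell $\f LM+(\text{its own ray direction})$ rather than merely satisfying the local Bergman-fan conditions. Handling points of $\AAA$ that lie on faces at infinity of $L$ (so that $\relsupp_{\vertex LM}(A_i)$ may be a proper cyclic flat) requires the \Cref{prop_contractions} reduction to be done carefully, tracking which contracted coordinates have been set to $\infty$; this is the place where the $\overline\MM(V)$ bookkeeping and the definition of $\vertex LM$ for $M\in\MM(V/F)$ really earn their keep.
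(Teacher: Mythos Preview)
You have the two directions reversed in difficulty, and the direction you call ``easy'' has a genuine gap.

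For the implication $\Pi(L)\Rightarrow$ presentation, your plan is to verify conditions~(\ref{c1}) and~(\ref{c2}) of \Cref{theorem_A} at every connected $N\in\MM(V)$ by zooming at $\vertex LN$ and invoking \Cref{prop_zoom_pres}. But \Cref{prop_zoom_pres} has as its \emph{hypothesis} that $\AAA$ is a presentation of~$V$; you cannot use it here without assuming what you are trying to prove. What you actually need is: given $(A_1,\dots,A_d)\in\Pi(L)$ and an \emph{arbitrary} connected $N\in\MM(V)$ (which need not lie in $\dmat(V)$), the multiset $\ldb\relsupp_{\vertex LN}(A_i)\rdb$ is the set of complements of a presentation of~$N$. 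The valuation identity $N(F)=\tau_{\underm V}(F)$ only counts how many distinguished matroids there are; it says nothing about how a point sitting in $\phi_L(M)$ for one distinguished $M$ looks when viewed from the vertex of a different $N$. Tracking this requires understanding how relative supports change as you move between cells of~$L$, and that is exactly what the paper's ascendent-path machinery (\Cref{def:ascendent step}, \Cref{lemma:terminate}) is built for. The paper first shows the weaker statement that the relative supports at any coloop-free cell form a \emph{pseudopresentation} (\Cref{prop_pseudo}), then uses \Cref{lemma_local_fail} to localize any failure of the full presentation condition to a single distinguished flat, and finally propagates that failure along an ascendent path until it contradicts the definition of $\phi_L(M)$ at an actual distinguished apex. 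None of this machinery appears in your proposal.

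By contrast, the direction presentation $\Rightarrow\Pi(L)$, which you flag as the ``main obstacle'', is the short one (\Cref{prop_halfB}): once you zoom at a distinguished apex $\vertex LM$, \Cref{prop_zoom_pres} now applies legitimately, and M\"obius inversion on $\tau$ immediately isolates the $\tau_M(\emptyset)$ points whose relative support has empty coclosure, i.e.\ is independent. There is no gluing problem, because each $A_i$ is assigned to the unique $M$ for which $\cocl_M(\relsupp_{\vertex LM}(A_i))=\emptyset$ (after first passing to the appropriate contraction via \Cref{prop_contractions}); uniqueness is automatic from the fact that these blocks partition a multiset of size~$d$.
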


In other words, the theorem asserts that $\Pi(\Be V)\subseteq (\TP)^d$ equals the row-wise projectivization of $\stiefel^{-1}(V)$. 
Notice that if $L = \Be M$ is the Bergman fan of a matroid $M$, then the distinguished set of apices $\dapx(L)$ consists of $\dapx(L) = \ldb \overline e_F : F\in \DF(M)\rdb$. 
So the distinguished set of apices $\dapx(L)$ are the valuated generalization of the unique maximal presentation of a transversal matroid.

We prove the two directions of the equivalence in~\Cref{thm_B} separately.
The easier one is \Cref{prop_halfB}, below. 
The other direction is \Cref{thm_apices}.

\begin{proposition}
\label{prop_halfB}
Let $V$ be a transversal valuated matroid. 
If $\AAA = \ldb A_1,\dots, A_d\rdb$ is a presentation of $V$ then $(A_1,\dots,A_d)\in\Pi(\Be V)$.
\end{proposition}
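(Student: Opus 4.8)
The plan is to take a presentation $\AAA=\ldb A_1,\dots,A_d\rdb$ of $V$ and reconstruct the decomposition that witnesses membership in $\Pi(\Be V)$, by sorting the rows according to which distinguished apex they ``belong'' to. First I would use \Cref{lem:containment} to know that each $A_i\in L=\Be V$. The key combinatorial device is to look, for each cyclic flat $F\in\CF(\underm V)$, at the set of rows whose coordinates indexed by $F$ are all $\infty$; by \Cref{prop_contractions} there are exactly $d-\rk_M(F)$ such rows, and their projections to the $[n]\setminus F$ coordinates form a presentation of $V/F$. Dually, the rows that are \emph{not} at infinity on $F$ should be the ones associated to distinguished matroids $M\in\MM(V/F')$ for cyclic flats $F'\subseteq F$ living in the ``interior'' near $F$. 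So the first real step is to make precise an assignment $\AAA\to\dmat(V)$ that partitions the $d$ rows into blocks, one block of size $t=\tau_M(\emptyset)=|\{M'\in\dmat(V):M'=M\}|$ for each distinguished matroid $M$ (with multiplicity), in such a way that the block assigned to $M$ consists of rows $A_i$ whose relative support with respect to $\vertex LM$ is an independent flat of $M$, and such that translating by $-\vertex LM$ and zooming gives a presentation of $\Be M$.

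Concretely, I would argue by induction on the number of cyclic flats of $\underm V$, peeling off one maximal-codimension contraction at a time. For a fixed connected matroid $M\in\MM(V)$ with vertex $\vertex LM$, apply the zoom map $Z_{\vertex LM}$: by \Cref{prop_zoom_pres}, $Z_{\vertex LM}(\AAA)$ is a set system presentation of $M$, hence by \Cref{BrualdiDinolt} (equivalently \Cref{prop:Bonin}) its elements are complements of flats $F_i$ with $\ldb\cocl(F_i)\rdb=\DF(M)$. This exactly says: the rows $A_i$ with $\cocl_M(\relsupp_{\vertex LM}(A_i))=\emptyset$, i.e.\ with $\relsupp_{\vertex LM}(A_i)$ an independent flat, number precisely $\tau_M(\emptyset)$, and these rows (suitably translated) lie in $\phi_L(M)$ by the very definition of the presentation fan. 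The rows with $\cocl_M(\relsupp_{\vertex LM}(A_i))=F$ a nonempty cyclic flat of $M$ are exactly those sent to higher-codimension cells, and by \Cref{prop_contractions} applied to the corresponding cyclic flat $F$ of $\underm V$ (using that $\CF(M/F)$ matches the interval above $F$ in $\CF(\underm V)$, which is also used in the proof of \Cref{prop_contractions}), their $[n]\setminus F$-projections form a presentation of $V/F$ to which the inductive hypothesis applies, yielding their membership in the factors $\phi_L(M')$ for $M'\in\dmat(V/F)\subseteq\dmat(V)$.

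Assembling these blocks across all distinguished matroids gives an element of $\prod_{M\in\dmat(V)}\phi_L(M)$ up to the $S_d$ reordering of the rows, which is precisely what it means to lie in $\Pi(\Be V)$. The two bookkeeping facts that make the partition well-defined and exhaustive are: (i) the counts match, i.e.\ $\sum_{M\in\dmat(V)}\tau_M(\emptyset)=d=|\AAA|$, which is the content of the proposition immediately preceding \Cref{def:presentation_fan}; and (ii) each row $A_i$ is assigned to exactly one distinguished matroid, which follows because for the connected $M\in\MM(V)$ whose cell's relative interior is hit, $\cocl_M(\relsupp_{\vertex LM}(A_i))$ is a single well-defined cyclic flat and the recursion on $V/F$ strictly decreases the number of cyclic flats.

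I expect the main obstacle to be the induction's interface: verifying that the block of rows peeled off for a cyclic flat $F$, when projected to $[n]\setminus F$, not only forms \emph{a} presentation of $V/F$ (this is \Cref{prop_contractions}) but that the distinguished-apex decomposition of \emph{that} presentation, pulled back through $\iota_F$ and re-translated, matches the factors $\phi_L(M)$ for $M\in\MM(V/F)\subseteq\dmat(V)$ with the correct multiplicities $\tau_M(\emptyset)$ computed in $V/F$ versus in $V$. One has to check $\tau$ is computed the same way (it is, since $\CF(M/F)\cong[F,\hat 1]$ in $\CF(M)$ and the corank function transfers), and that $\vertex{\Be{V/F}}M$ extends by $\infty$'s to $\vertex LM$ as in \Cref{def:presentation_fan} — but these are exactly the compatibilities the definitions were set up to guarantee, so the argument should go through cleanly once the partition is stated carefully.
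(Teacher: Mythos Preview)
Your proposal is correct and follows essentially the same approach as the paper's proof: both use \Cref{prop_zoom_pres} to obtain a set system presentation of each connected $M\in\MM(V)$ via the zoom map, invoke \Cref{BrualdiDinolt} (the paper phrases this as \Cref{prop_A}\eqref{cond2} plus M\"obius inversion) to count exactly $\tau_M(\emptyset)$ rows with independent relative support at $\vertex LM$, and handle distinguished matroids in $\MM(V/F)$ via \Cref{prop_contractions}. Your inductive packaging is just a slightly more explicit version of the paper's ``by the same argument as above'' in its final paragraph, and your bookkeeping concern (ii) about the blocks forming a genuine partition is a point the paper leaves implicit as well.
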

\begin{proof}
Let $\AAA$ be a presentation of $V$ and let $M\in \dmat(V)$. First assume $M\in \MM(V)$. 
Then by \Cref{prop_zoom_pres} we have that $Z_{\vertex LM}(\AAA)$ is a presentation of $\Be M$. 

By \Cref{prop_A} (\ref{cond2}) there are exactly $\cork(F)$ points in a presentation of $M$ whose relative support with respect to~$0$ contains $F$, for every $F\in \CF(M)$.
By definition of $\tau$ and the M\"obius inversion formula, there are exactly $\tau(F)$ points in $Z_{\vertex LM}(\AAA)$ such that the maximal cyclic flat contained in their relative support with respect to $0$ is $F$, i.e.\ points $x$ such that $\cocl_M(\relsupp_0(x)) = F$. 
Applying this to $F=\emptyset$, we get that
there are exactly $\tau_M(\emptyset)$ points of $Z_{\vertex LM}(\AAA)$ whose relative support with respect to $0$ is an independent set of $M$. 
The tuple formed from the corresponding points in $\AAA$ will then be in $\phi_L(M)$.

Now if $M$ is not in $\MM(V)$ but in $\MM(V/F)$ for some $F\in \CF(\underm V)$, then by \Cref{prop_contractions} there is $\AAA_F\subseteq \AAA$ such that its projection to the $[n]/F$ coordinates is a presentation of $V/F$. 
Then by the same argument as above, there are $\tau_M(\emptyset)$ of those points in $\phi_{\Be{V/F}}(M)$ which proves the desired result as $\iota_F(\phi_{\Be{V/F}}(M)) = \phi_L(M)$. 
\end{proof} 



\subsection{Pseudopresentations}
We recall the following characterization of transversal matroids in the form due to Ingleton \cite{Ingleton}.
Essentially the same characterization, but quantifying over all cyclic sets, was given earlier by Mason \cite{Mason}.
\begin{proposition}
\label{prop_transv_charac}
A matroid $M$ is transversal if and only if for every collection of cyclic flats $F_1,\dots,F_k$ the following inequality is satisfied:
\[
\sum\limits_{\emptyset\neq I \subseteq [k]}(-1)^{|I|}\rk\left(\bigcup\limits_{i\in I}F_i\right)\leq -\rk\left(\bigcap\limits_{i=1}^k F_i\right).
\]
\end{proposition}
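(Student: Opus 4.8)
The plan is to reduce the statement to the description of transversal matroids already established in this section. By \Cref{prop:Bonin}, together with the observation that \eqref{eq:definition of tau} is precisely the M\"obius inversion of $G\mapsto\cork(G)$ over $\CF(M)$ (so that the function $\beta$ on $\FF(M)$ agreeing with $\tau$ on cyclic flats and vanishing elsewhere automatically satisfies \eqref{eq:beta CF}), a matroid $M$ is transversal if and only if this $\beta$ satisfies \eqref{eq:beta pos} and \eqref{eq:beta F}. So it suffices to show that the whole family of the displayed inequalities is equivalent to the conjunction of ``$\tau\ge0$ on cyclic flats'' and \eqref{eq:beta F}.

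The crux is a single identity bridging the two sides. Fix cyclic flats $F_1,\dots,F_k$ of $M$ (with $k\ge1$; the empty collection must be excluded from the statement), write $F=\bigcap_{i}F_i$, which is a flat as an intersection of flats, and for $\emptyset\ne I\subseteq[k]$ set $G_I=\cl\bigl(\bigcup_{i\in I}F_i\bigr)$. A union of cyclic flats is a cyclic set and the closure of a cyclic set is again a cyclic flat, so each $G_I$ lies in $\CF(M)$, and M\"obius inversion gives $\sum_{H\in\CF(M),\,H\supseteq G_I}\tau(H)=\cork(G_I)$. Since for a flat $H$ one has $F_i\subseteq H$ for all $i\in I$ exactly when $G_I\subseteq H$, inclusion--exclusion over the union $\{H\in\CF(M):F_i\subseteq H\text{ for some }i\}=\bigcup_i\{H\in\CF(M):F_i\subseteq H\}$ yields
\[
\sum_{\substack{H\in\CF(M)\\ F_i\subseteq H\text{ for some }i}}\tau(H)
=\sum_{\emptyset\ne I\subseteq[k]}(-1)^{|I|+1}\cork(G_I)
=d+\sum_{\emptyset\ne I\subseteq[k]}(-1)^{|I|}\rk\Bigl(\bigcup_{i\in I}F_i\Bigr),
\]
where $d=\rk(M)$, using $\sum_{\emptyset\ne I}(-1)^{|I|+1}=1$ and $\rk(G_I)=\rk(\bigcup_{i\in I}F_i)$. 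Hence the Mason--Ingleton inequality for $F_1,\dots,F_k$ is equivalent to $\sum_{H\in\CF(M),\,F_i\subseteq H\text{ for some }i}\tau(H)\le\cork(F)$.

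From this reformulation both directions are short. If $M$ is transversal, then $\tau\ge0$ by \Cref{BrualdiDinolt} (equivalently \Cref{prop:Bonin}), so the left side above is at most $\sum_{H\in\CF(M),\,H\supseteq F}\tau(H)$, which is at most $\cork(F)$ by \eqref{eq:beta F} applied to the flat $F$; this is the desired inequality. Conversely, assuming all the inequalities hold, I would check \eqref{eq:beta pos} and \eqref{eq:beta F} for $\beta=\tau$ extended by zero, and invoke \Cref{prop:Bonin}. For \eqref{eq:beta F} with a flat $F$: if no cyclic flat contains $F$ the inequality is $0\le\cork(F)$; otherwise let $F_1,\dots,F_k$ be the minimal cyclic flats containing $F$, so $\{H\in\CF(M):F_i\subseteq H\text{ for some }i\}=\{H\in\CF(M):H\supseteq F\}$, and the identity together with Mason--Ingleton gives $\sum_{H\supseteq F}\tau(H)\le\cork(\bigcap_iF_i)\le\cork(F)$ because $F\subseteq\bigcap_iF_i$. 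For \eqref{eq:beta pos} with a cyclic flat $F$: if $F$ is the top cyclic flat then $\tau(F)=\cork(F)\ge0$; otherwise let $F_1,\dots,F_k$ be the cyclic flats covering $F$ in $\CF(M)$, so $\{H\in\CF(M):F_i\subseteq H\text{ for some }i\}=\{H\in\CF(M):H\supsetneq F\}$ and the left side of the identity equals $\cork(F)-\tau(F)$ by \eqref{eq:beta CF}; then Mason--Ingleton gives $\cork(F)-\tau(F)\le\cork(\bigcap_iF_i)\le\cork(F)$, whence $\tau(F)\ge0$.

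The main obstacle is the lattice bookkeeping behind the identity: verifying that $\cl(\bigcup_{i\in I}F_i)$ really is a cyclic flat, so that M\"obius inversion applies to it, and verifying that ``cyclic flat containing $F$'' coincides with ``cyclic flat containing one of the minimal cyclic flats above $F$'', and likewise ``cyclic flat strictly containing $F$'' with ``cyclic flat containing a cover of $F$'' in $\CF(M)$ --- this is exactly what makes the inclusion--exclusion in the sufficiency step collapse as claimed. The degenerate cases (the excluded empty collection; a flat with no cyclic flat above it; $F$ equal to the top cyclic flat) each need only a sentence once the identity is set up.
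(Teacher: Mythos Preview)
The paper does not prove \Cref{prop_transv_charac}; it is quoted from the literature (Ingleton~\cite{Ingleton}, Mason~\cite{Mason}) and used as a tool. So there is no argument in the paper to compare against.

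Your derivation from \Cref{BrualdiDinolt}/\Cref{prop:Bonin} is correct. The key identity
\[
\sum_{\substack{H\in\CF(M)\\ \exists i:\,F_i\subseteq H}}\tau(H)
=\sum_{\emptyset\ne I\subseteq[k]}(-1)^{|I|+1}\cork\Bigl(\cl\bigl(\textstyle\bigcup_{i\in I}F_i\bigr)\Bigr)
= d+\sum_{\emptyset\ne I\subseteq[k]}(-1)^{|I|}\rk\Bigl(\textstyle\bigcup_{i\in I}F_i\Bigr)
\]
is exactly what is needed, and your justifications are sound: inclusion--exclusion is an identity for indicator functions, so it holds for the possibly signed weighting~$\tau$; a union of cyclic sets is cyclic and its closure is a cyclic flat, so the M\"obius inversion $\sum_{H\supseteq G}\tau(H)=\cork(G)$ applies to each $G_I=\cl(\bigcup_{i\in I}F_i)$. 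In the forward direction you invoke that for transversal~$M$ the extension of $\tau$ by zero solves the system in \Cref{prop:Bonin}, which the paper states explicitly. In the converse, your choices of the $F_i$ (minimal cyclic flats above a given flat~$F$, respectively covers of a given cyclic flat~$F$ in $\CF(M)$) make the set $\{H\in\CF(M):\exists i,\,F_i\subseteq H\}$ equal to $\{H\in\CF(M):H\supseteq F\}$, respectively $\{H\in\CF(M):H\supsetneq F\}$; both equalities hold in any finite poset, no lattice property needed. The edge cases you flag (the excluded $k=0$, which would falsify the inequality as literally written; flats containing a coloop, above which there is no cyclic flat; the top cyclic flat) are handled correctly.

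In short: your proof is a clean reduction of Mason--Ingleton to Brualdi--Dinolt within the paper's framework; the paper itself simply cites the result rather than proving it.
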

Notice that for $k=2$, this is the submodularity axiom of the rank function. 
We also remark that on substituting $\rk(J)=d-\cork(J)$ in the above inequality, the $d$ terms cancel out,
and therefore a formally identical inequality is true where $\rk$ is replaced by $\cork$ and $\leq$ by~$\geq$.
\begin{definition}
Let $M$ be a transversal matroid of rank $d$. We say that a collection $G_1,\dots G_d$ of flats of $M$ is a \newword{pseudopresentation} if 
\[
\ldb\cocl(G_1),\dots\cocl(G_d)\rdb = \DF(M).
\]
\end{definition}
To motivate this definition, note that it is a necessary condition for a presentation of~$M$ 
that the complements of its members be a pseudopresentation (see \Cref{BrualdiDinolt}).

\begin{example}
Consider the uniform matroid $U_{d,n}$ with $d\ge 2$. 
The collection $\ldb \{1\}, \dots, \{1\} \rdb$ consisting of the flat $\{1\}$ with multiplicity $d$ is a pseudopresentation, because $\cocl(\{1\}) = \emptyset$,
matching the computation of $\DF(U_{d,n})$ from \Cref{ex:uniform}.
However, the collection of complements of this collection is not a presentation of $U_{d,n}$ as it fails to meet the conditions of \Cref{BrualdiDinolt}. In particular, the matroid with such presentation would have $1$ as a loop. 
\end{example}

The following lemma says that if a pseudopresentation fails to be the complements of a presentation, then the failure is ``local'', 
that is, there is a distinguished cyclic flat $F$ such that the $G_i$ which extend $F$ were poorly chosen.
In other words, replacing every element in the pseudopresentation which does not extend $F$ by its coclosure does not yield a presentation either.

\begin{lemma}
\label{lemma_local_fail}
Let $M$ be a transversal matroid with $\DF(M) = \ldb F_1,\linebreak[1]\dots,\linebreak[1] F_d \rdb$ and let $G_1,\dots,G_d\in \FF(M)$ be a pseudopresentation. 
Suppose that $G_1,\dots,G_d$ are not the complements of a presentation. Then there exists $F\in \DF(M)$ and $I,J\subseteq[d]$, such that:
\begin{itemize}
	\item $\cocl(G_i) = F$ for every $i\in I$
	\item $F \subsetneq F_j$ for every $j\in J$. 
	\item $\cork\left(\bigcap\limits_{i\in I}G_i \cap \bigcap\limits_{j\in J}F_j\right) < |I|+|J|$
\end{itemize}
\end{lemma}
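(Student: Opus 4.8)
The plan is to leverage the numerical characterization of presentations from \Cref{BrualdiDinolt}: a pseudopresentation $G_1,\dots,G_d$ fails to be the complements of a presentation precisely when Hall's condition is violated, i.e.\ there is some $K\subseteq[d]$ with $\cork\bigl(\bigcap_{i\in K}G_i\bigr)<|K|$. Fix such a $K$ witnessing the failure, chosen so that $|K|$ is minimal among all violating subsets. The goal is to show one can ``separate out'' a single distinguished cyclic flat $F$ and organize $K$ into the two pieces $I$ and $J$ described in the statement.

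The key idea is as follows. Since $\ldb\cocl(G_1),\dots,\cocl(G_d)\rdb=\DF(M)$, each $G_i$ has a well-defined coclosure which is one of the distinguished cyclic flats $F_1,\dots,F_d$. I would look at the multiset $\ldb\cocl(G_i):i\in K\rdb$ and pick $F$ to be a \emph{minimal} element (in $\CF(M)$) of this multiset. Set $I:=\{i\in K:\cocl(G_i)=F\}$. The remaining indices $K\setminus I$ should be handled by replacing each $G_i$, $i\in K\setminus I$, by $\cocl(G_i)$: since $\cork$ only depends on rank and $G_i$ and $\cocl(G_i)$ have the same rank, $\cork(\bigcap_{i\in K}G_i)$ is unchanged if we intersect with $\cocl(G_i)$ instead — wait, that is not literally true since $\bigcap G_i\neq\bigcap\cocl(G_i)$ in general; rather, one uses that the failure of Hall localizes. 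More carefully, I would argue that the violating intersection can be arranged so that the indices in $K\setminus I$ contribute their coclosures $F_j$ (distinguished cyclic flats strictly containing $F$, by minimality of $F$ — here one must also check they genuinely strictly contain $F$, using that if $\cocl(G_j)=F$ too then $j$ would have been put in $I$, and if $\cocl(G_j)$ is incomparable to $F$ one adjusts $K$). Setting $J$ to be (a subset of) $K\setminus I$ indexing those $F_j\supsetneq F$, one gets $\cork\bigl(\bigcap_{i\in I}G_i\cap\bigcap_{j\in J}F_j\bigr)<|I|+|J|\le|K|$, which is the third bullet.

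The main obstacle I expect is the bookkeeping that forces the three bullets to hold \emph{simultaneously} — in particular ensuring $F\subsetneq F_j$ strictly for the indices placed in $J$, and ensuring that after passing from $G_j$ to $F_j=\cocl(G_j)$ for $j\in K\setminus I$ one still has a genuine violation of Hall's inequality (the corank can only go up when we shrink sets, so one must be careful about the direction). The clean way around this is probably to use the minimality of $K$ and the ``$\cork$-version'' of Ingleton's inequality recorded after \Cref{prop_transv_charac}: since $M$ is transversal, $\cork$ is superadditive in the relevant sense on cyclic flats, so a global Hall violation among the $G_i$ must, after replacing non-$F$-extending members by coclosures, persist as a violation witnessed by $I$ together with the strictly-larger distinguished flats $F_j$. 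A second, more delicate point is the possibility that several of the distinguished flats equal to $F$ appear; then $\bigcap_{i\in I}G_i$ may be strictly smaller than $F$ itself, and one needs $\cork$ of this genuinely smaller intersection, not just $\cork(F)=\tau$-data. I would handle this by induction on $|K|$: if the localized violation with $J=\emptyset$ already works we are done; otherwise some $F_j$ must be incorporated, and minimality of $|K|$ together with submodularity closes the loop.
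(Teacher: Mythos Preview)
Your starting point is right --- a minimal Hall-violating subset $K\subseteq[d]$ exists by \Cref{BrualdiDinolt} --- and you correctly sense that the corank form of Ingleton's inequality (\Cref{prop_transv_charac}) is the essential external input. But the constructive strategy of picking a single $F$ minimal among $\{\cocl(G_i):i\in K\}$ and splitting $K$ into $I$ and $J$ has two gaps you have not closed.

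First, minimality of $F$ in $\CF(M)$ does not force the remaining coclosures to contain $F$: the lattice of cyclic flats need not be a chain, so $\cocl(G_j)$ for $j\in K\setminus I$ can be incomparable to~$F$. You flag this but give no actual mechanism for ``adjusting $K$'', and minimality of $|K|$ alone does not supply one.

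Second, replacing $G_j$ by $F_j=\cocl(G_j)\subseteq G_j$ moves corank the \emph{wrong} way. Every element of $G_j\setminus F_j$ is a coloop of $M|G_j$, hence also of $M|S$ for any $S\subseteq G_j$; so with $S=\bigcap_K G_i$ and $S'=\bigcap_I G_i\cap\bigcap_J F_j$ one gets $\cork(S')=\cork(S)+|S\setminus S'|$, and nothing prevents this from reaching~$|K|$. The phrase ``minimality of $|K|$ together with submodularity closes the loop'' is not yet an argument.

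The paper avoids both problems by arguing the contrapositive: assume no triple $(F,I,J)$ as in the statement exists. List the \emph{distinct} coclosures appearing in $\{\cocl(G_i):i\in K\}$ as $F_1,\dots,F_k$ (in some order), and for each $j$ apply the assumption with a tailored set $J_j$ indexing distinguished flats that contain $F_j\cup\bigcap_{j'<j}F_{j'}$; this yields an upper bound on $m_j=|\{i\in K:\cocl(G_i)=F_j\}|$ involving an alternating sum of coranks of unions. Summing the $k$ bounds telescopes, and Ingleton's inequality applied to the \emph{full} family $F_1,\dots,F_k$ collapses the alternating sum to $\cork\bigl(\bigcap_j F_j\bigr)$, yielding $|K|\le\cork\bigl(\bigcap_K G_i\bigr)$, a contradiction. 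The global application of Ingleton across all the distinct coclosures at once is precisely what handles the incomparable flats and absorbs the surplus coloops that stalled your direct approach; trying to isolate a single $F$ first forfeits this leverage.
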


\begin{proof}
Suppose that  such $F$ does not exist but $G_1,\dots, G_d$ are not the complements of a presentation. Then there is a set of indices $I\subseteq [d]$ such that 
\[
\cork\left(\bigcap \limits_{i\in I}G_i\right) < |I|.
\]

Let $k$ be the number of different elements of $\{\cocl(G_i) : i\in I\}$ and without loss of generality let that set be $\{F_1,\dots,F_k\}$. For $j\in [k]$ let $I_j = \{i\in I : \cocl(G_i) = F_j\}$ and let $m_j = |I_j|$. The $I_j$ clearly partition $I$ so we have that
\[
\sum\limits_{j=1}^k m_j = |I|.
\]
Let $K= \bigcap \limits_{i\in I}G_i$. For any proper subset $J\subseteq [k]$ let 
\[
a_J= \left|K\cap\left(\bigcap \limits_{j\in J}F_j\right)\cap\left(\bigcap\limits_{j\in [k]\setminus J}[n]\setminus F_j\right)\right|
\] 
and let $a_{[k]} = \rk\left(\bigcap \limits_{i\in [k]}F_i\right)$. Notice that for any element $x\in K \setminus \bigcap \limits_{i\in [k]}F_i$, $x$ is a coloop of some $G_i$, so in particular it is a coloop in $K$. Therefore we have that 
\[
\rk(K) = \sum\limits_{J \subseteq [k]} a_J.
\]

Since the $G_i$ are pseudopresentation, we have that $\bigcap\limits_{i\in I_j} G_i$ consists of $F_j$ plus (possibly) some coloops. Since $\bigcap\limits_{i\in I_1} G_i \setminus F_1 \supseteq K\setminus F_1$, we have that
\[
\rk\left(\bigcap\limits_{i\in I_1} G_i\right) \ge \rk(F_1)+ \sum\limits_{J\subseteq [k]\setminus\{1\}}a_J.
\]
As we assume $(F_1, I_1,\emptyset)$ is not a certificate as described in the lemma (as the tuple $(F,I,J)$ in the statement), we have that
\[
m_1 \leq \cork\left(\bigcap\limits_{i\in I_1} G_i\right) \leq \cork(F_1)- \sum\limits_{J\subseteq [k]\setminus\{1\}}a_J.
\]
Now for any $2\leq j \leq k$, let 
\[
J_j = \left\{j'\in [d] : F_j\cup \left(\bigcap\limits_{j''<j} F_{j''}\right)\subseteq F_{j'}\text{ and }F_j\neq F_{j'}\right\}.
\]
By inclusion-exclusion, we have that 
\[
|J_j| \geq \sum_{\emptyset\neq J\subseteq [j-1]} (-1)^{|J|-1}\cork\left(\bigcup\limits_{j'\in J\cup\{j\}} F_{j'}\right).
\]
(The right hand side is counting the number of flats that contain $F_j$ and $F_{j''}$ for some $j''<j$.) Now notice that
\[\left(K\cap\bigcap\limits_{j'<j}F_{j'}\right) \setminus F_j  \subseteq \left(\bigcap\limits_{i\in I_j} G_i \cap \bigcap\limits_{j'\in J_j} F_{j'} \right) \setminus F_j\]
and
\[
\left|\left(K\cap\bigcap\limits_{j'<j}F_{j'}\right) \setminus F_j\right| = \sum\limits_{[j-1] \subseteq J\subseteq [k]\setminus\{j\}}a_J,
\]
so
\[
\rk\left(\bigcap\limits_{i\in I_j} G_i \cap \bigcap\limits_{j'\in J_j} F_{j'} \right) \ge \rk(F_j)+ \sum\limits_{[j-1] \subseteq J\subseteq [k]\setminus\{j\}}a_J.
\]
Similarly as before, we assume the conditions of the lemma are not satisfied for $(F_j,I_j,J_j)$, so
\begin{align*}
m_j &\leq \cork\left(\bigcap\limits_{i\in I_j} G_i \cap \bigcap\limits_{j'=1}^{j-1}F_{j'} \right) -|J_j|\\
&\leq       \left(\cork F_j - \sum\limits_{[j-1] \subseteq J\subseteq [k]\setminus\{j\}}a_J\right) - \sum_{\emptyset\ne  J\subseteq [j-1]} (-1)^{|J|-1}\cork\left(\bigcup\limits_{j'\in J\cup\{j\}} F_{j'}\right) \\
&\leq \sum_{J\subseteq [j-1]} (-1)^{|J|}\cork\left(\bigcup\limits_{j'\in J\cup\{j\}} F_{j'}\right)-\sum\limits_{[j-1] \subseteq J\subseteq [k]\setminus\{j\}}a_J.
\end{align*}
Adding all bounds for the $m_j$ and using \Cref{prop_transv_charac} we get:
\begin{align*}
\sum\limits_{j=1}^{k} m_j = |I| &\leq \sum_{\emptyset\neq J\subseteq [k]} (-1)^{|J|+1}\cork\left(\bigcup\limits_{j\in J} F_{j}\right)-\sum\limits_{J\subsetneq [k]}a_J\\
&\leq \cork(\bigcap\limits_{j=1}^k F_j) -\sum\limits_{J\subsetneq [k]}a_J\\
&= d - \sum\limits_{J\subseteq [k]}a_J\\
&= \cork(K)
\end{align*}
which is a contradiction, as we assumed $|I|>\cork(K)$.
\end{proof}

\begin{example}
Consider $M = U_{1,2}\oplus U_{2,3}$, labelling the ground set so that
$M$ is the sum of the matroid $U_{1,2}$ on $\{1,2\}$ and the matroid $U_{2,3}$ on $\{3,4,5\}$. 
We have $\DF(M) = \ldb 12, 12, 345\rdb$. The collection $\ldb 123,123, 345\rdb$ is a pseudopresentation of $M$, since $\cocl(123) = 12$. 
However, it is not the set of complements of a presentation since they all intersect in $3$ which is not a loop. 
This failure to be a presentation is concentrated in the flats extending $12$, so in terms of \Cref{lemma_local_fail} we have $F = 12$, $G_1 = G_2 = 123$ and $F_3 = 345$. 
\end{example}

\subsection{Paths of points and flats}
The two proofs in \Cref{ssec:path proofs} are both arguments by contradiction
establishing some property of all distinguished flats $F$ of coloopless matroids $M$ indexing a face $\f LM$ in a tropical linear space $L$.
They proceed by reducing a counterexample to another counterexample for different $F$ and $M$.
In this subsection we introduce the reductions used and show that a sequence thereof must terminate.

Let $L = \Be V$ be a tropical linear space such that $V$ has transversal facets.
Let $x\in L$, and let $M$ be the matroid such that $x\in\f LM$. Assume $M$ is coloopless.
Let $F\in\DF(M)$ be a distinguished flat.
Denote by~$H_F$ the supporting hyperplane
\[
H_F := \left\{\sum\limits_{i\in F}z_i = \rk_M(F)\right\}.
\]
If $F=\emptyset$ then $H_F$ is not a hyperplane, but in this event we will not use~$H_F$.

\begin{definition}\label{def:ascendent step}
An \newword{ascendent step} from $(M,F,x)$ is a triple $(M',F',x')$ satisfying conditions given as follows.
\begin{enumerate}\addtocounter{enumi}{-1}
\item\label{as0} If $F=\emptyset$ then there are no ascendent steps.
\item\label{as1} If $F\ne\emptyset$ and $P_M\not\subseteq H_F$ then the ascendent steps are the triples of form
\[(M',F',x')=(M|F\oplus M/F, F, x+\lambda e_F)\]
for some $\lambda>0$ with $x'\in\f L{M'}$.
\item\label{as2} If $F\ne\emptyset$, $P_M\subseteq H_F$ and $\rk_{\underm V}(F) > \rk_M(F)$ then the conditions on an ascendent step $(M',F',x')$ are as follows.
As above, $x'=x+\lambda e_F$, where now $\lambda>0$ is minimal such that $x+\lambda e_F\notin\f LM$.
Then $x'$ is in a cell $\f L{M'}$ which must be a proper face of $\f LM$.
The flat $F'\in\DF(M')$ must be such that $F'\setminus F$ is independent in~$M/F$.
\item\label{as3} If $F\ne\emptyset$, $P_M\subseteq H_F$ and $\rk_{\underm V}(F) = \rk_M(F)$ then there are no ascendent steps.
\end{enumerate}
\end{definition}

We know $\rk_{\underm V}(F)\ge \rk_M(F)$, so these cases are comprehensive.

\begin{definition}
An \newword{ascendent path} is a finite or infinite sequence of triples $(\tau_i)_{i\ge0}$, $\tau_i = (M_i,F_i,x_i)$,
such that for each $i\ge0$,
either $\tau_i$ is the last term of the sequence or $\tau_{i+1}$ is an ascendent step from $\tau_i$.\end{definition}

Let us give some intuition of what an ascendent path is. 
In each ascendent step, we go from the point $x_i$ in a colooples cell $L_{M_i}$ and start going in a straight line within $L$ in direction $e_{F_i}$ until we change the cell of $L$ where we are standing, so long as it is still is coloopless. If that change occurs immediately, that is, $L_{M_{i+1}}$ is of higher dimension than $L_{M_i}$ and $P_{M_{i+1}}$ is a face of $P_{M_i}$, we keep going in the same direction (Case 1). If not, since $L_{M_i}$ is bounded because $M_i$ is coloopless, then that change occurs at a face $L_{M_{i+1}}$ of $L_{M_i}$ i.e. $P_{M_i}$ is a face of $P_{M_{i+1}}$ (Case 2). This is the opposite of the last case in that
\begin{equation}
M_i = M_{i+1}\backslash F_i \oplus M_{i+1}/([n]\setminus F_i).
\label{eq:case2}
\end{equation}

In this case we may choose a new direction, however with the restriction above which is equivalent that $\cocl_{M_i}(F_{i+1}\cup F_i) = F_i$, that is, $F_{i+1}\setminus F_i$ consists of coloops in $M_i|F_{i+1}\cup F_i$. We repeat this until the direction is $\emptyset$ (Case 0) or we leave the bounded region of $L$ (Case 3). 
Again, what we will show for our later uses of this definitions, in \Cref{lemma:terminate},
is that all ascendent paths terminate after finitely many steps (thus for example they cannot loop). 
The reason why we call the paths ``ascendent'' is \Cref{lema:relsupp}.

\begin{lemma}\label{lemma:relative coloops transitive}
Let $((M_i,F_i,x_i))_{i\ge0}$ be an ascendent path.
Then, for any $i\le j$, $\left(\bigcup\limits_{i<k\le j}F_k\right)\setminus F_i$ is independent in $M_i/F_i$.
\end{lemma}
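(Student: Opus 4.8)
The plan is to induct on $j-i$. The cases $j-i\in\{0,1\}$ form the base. For $j=i$ the set $\left(\bigcup_{i<k\le j}F_k\right)\setminus F_i$ is empty. For $j=i+1$ it equals $F_{i+1}\setminus F_i$, which is independent in $M_i/F_i$: in Case~1 of \Cref{def:ascendent step} one has $F_{i+1}=F_i$, so the set is empty, while in Case~2 this independence is precisely the condition imposed on the ascendent step. (Cases~0 and~3 admit no ascendent step, so $\tau_i\to\tau_{i+1}$ is always a Case~1 or a Case~2 step.)

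For the inductive step, assume $j-i\ge2$ and examine the first step $\tau_i\to\tau_{i+1}$. If it is a Case~1 step, then $F_{i+1}=F_i$, and since contracting $F_i$ annihilates the summand $M_i|F_i$ we get $M_{i+1}/F_{i+1}=(M_i|F_i\oplus M_i/F_i)/F_i=M_i/F_i$; moreover $\left(\bigcup_{i<k\le j}F_k\right)\setminus F_i$ equals $\left(\bigcup_{i+1<k\le j}F_k\right)\setminus F_{i+1}$, which is independent in $M_{i+1}/F_{i+1}=M_i/F_i$ by the inductive hypothesis applied to the pair $(i+1,j)$.

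The substantive case is a Case~2 step. Here \eqref{eq:case2} gives $M_i=(M_{i+1}\setminus F_i)\oplus(M_{i+1}/([n]\setminus F_i))$, and contracting $F_i$, which is the ground set of the second summand, leaves $M_i/F_i=M_{i+1}\setminus F_i=M_{i+1}|([n]\setminus F_i)$. Put $W:=\left(\bigcup_{i<k\le j}F_k\right)\setminus F_i$; since $W\subseteq[n]\setminus F_i$, it is independent in $M_i/F_i$ if and only if it is independent in $M_{i+1}$. Write $W=P\sqcup Q$ with $P:=F_{i+1}\setminus F_i$ and $Q:=W\setminus F_{i+1}$; this is a disjoint decomposition because $W\cap F_{i+1}=F_{i+1}\setminus F_i=P$, as $W$ avoids $F_i$ and contains $F_{i+1}\setminus F_i$. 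Now $P$ is independent in $M_i/F_i=M_{i+1}|([n]\setminus F_i)$ by the base case $j-i=1$, hence independent in $M_{i+1}$; and $Q\subseteq\left(\bigcup_{i+1<k\le j}F_k\right)\setminus F_{i+1}$, which by the inductive hypothesis for $(i+1,j)$ is independent in $M_{i+1}/F_{i+1}$, so $\rk_{M_{i+1}}(Q\cup F_{i+1})=\rk_{M_{i+1}}(F_{i+1})+|Q|$. Applying submodularity of $\rk_{M_{i+1}}$ to $P\cup Q$ and $F_{i+1}$, whose union is $Q\cup F_{i+1}$ and whose intersection is $P$, yields $\rk_{M_{i+1}}(W)\ge\rk_{M_{i+1}}(P)+\rk_{M_{i+1}}(Q\cup F_{i+1})-\rk_{M_{i+1}}(F_{i+1})=|P|+|Q|=|W|$, so $W$ is independent, completing the induction.

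I expect the Case~2 step to be the only genuine difficulty. The idea is to use \eqref{eq:case2} to convert a statement about a contraction of $M_i$ into a statement about a restriction of $M_{i+1}$, then split $W$ into the part inside $F_{i+1}$, which is controlled by the one-step base case, and the part outside $F_{i+1}$, which is controlled by the inductive hypothesis for $(i+1,j)$, recombining the two with a single application of submodularity. The set identities $W\cap F_{i+1}=F_{i+1}\setminus F_i$ and $W\setminus F_{i+1}\subseteq\left(\bigcup_{i+1<k\le j}F_k\right)\setminus F_{i+1}$ needed along the way are immediate once one observes that $W$ is disjoint from $F_i$ and that $F_{i+1}$ occurs among the $F_k$ with $i<k\le j$.
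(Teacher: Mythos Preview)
Your proof is correct and follows essentially the same approach as the paper: both induct by reducing the pair $(i,j)$ to $(i+1,j)$, handle Case~1 identically, and in Case~2 use \eqref{eq:case2} to identify $M_i/F_i$ with $M_{i+1}\setminus F_i$ before combining the one-step independence of $F_{i+1}\setminus F_i$ with the inductive hypothesis for $(i+1,j)$. The only cosmetic difference is that the paper phrases the combination step via coloops (the elements of $U=\left(\bigcup_{i+1<k\le j}F_k\right)\setminus F_{i+1}$ are coloops of $M_{i+1}|(U\cup F_{i+1})$, so adjoining them to the independent set $F_{i+1}\setminus F_i$ preserves independence), while you phrase the same rank computation as a single submodularity inequality.
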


\begin{proof}
We use descending induction on~$i$.  
The base case is $i=j$, where $F_j\setminus F_i$ is empty and therefore independent in any matroid.

If $i<j$ then the ascendent step from $(M_{i+1},F_{i+1},x_{i+1})$ belongs either to Case~1 or Case~2 of Definition~\ref{def:ascendent step}.
In Case~1, $F_{i+1} =F_i$ so 
\[\left(\bigcup\limits_{i<k\le j}F_k\right)\setminus F_i = \left(\bigcup\limits_{i+1<k\le j}F_k\right)\setminus F_{i+1},\]
which by induction hypothesis is independent in
\[M_{i+1}/F_{i+1}=(M_i|F_i\oplus M_i/F_i)/F_{i+1} = M_i/F_i,\]
which is what is needed.

In Case~2, first notice that
\[
M_i / F_i = (M_{i+1}\setminus F_i \oplus M_{i+1}/([n]\setminus F_i))/F_i = M_{i+1}\backslash F_i.
\]
By definition of ascendent step, $F_{i+1} \setminus F_i$ is independent in $M_i / F_i = M_{i+1}\backslash F_i$, 
so it is also independent in any restriction of $M_{i+1}$ that contains it, in particular in $M_{i+1}|(U\cup F_{i+1})$.
By the induction hypothesis, $U =  \left(\bigcup\limits_{i+1<k\le j}F_k\right)\setminus F_{i+1}$ consists of coloops of $M_{i+1}|(U\cup F_{i+1})$, 
so the set $U\cup (F_{i+1} \setminus F_i)$, being obtained by adding coloops to $F_{i+1} \setminus F_i$, is also independent in $M_{i+1}|(U\cup F_{i+1})$. But
\[
\left(\bigcup\limits_{i<k\le j}F_k\right)\setminus F_i\subseteq U\cup (F_{i+1} \setminus F_i),
\]
so $\left(\bigcup\limits_{i<k\le j}F_k\right)\setminus F_i$ is also independent in $M_{i+1}|(U\cup F_{i+1})$ and hence in  $M_i / F_i = M_{i+1}\backslash F_i$. 
\end{proof}

\begin{lemma}
\label{lema:relsupp}
Let $((M_i,F_i,x_i))_{i\ge0}$ be an infinite ascendent path. Then the sequence of $(x_i)_{i\ge 0}$ is nondecreasing when written in supportive coordinates with respect to~$x_0$, 
i.e.\ for every $\ell$, the $\ell$-th coordinate of $x_i$ is a nondecreasing function of~$i$.
\end{lemma}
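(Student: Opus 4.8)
The plan is to choose one affine lift at the start and show that no projective rescaling is ever forced along the path, so that the nondecrease becomes visible coordinate by coordinate. Fix an affine representative $\tilde x_0\in\RR^n$ of $x_0$. Along an infinite path every ascendent step falls under case (1) or case (2) of \Cref{def:ascendent step} (cases (0) and (3) admit no step), and in both of these we have $x_{i+1}=x_i+\lambda_i e_{F_i}$ in $\TP$ for some real number $\lambda_i>0$. Hence $\tilde x_i:=\tilde x_0+\sum_{0\le k<i}\lambda_k e_{F_k}$ is an affine representative of $x_i$, and the vectors $\tilde x_i$ are coordinatewise nondecreasing in $i$. The only thing to verify is that each $\tilde x_i$ is precisely the \emph{supportive} representative of $x_i$ with respect to $x_0$, i.e.\ that $\min_\ell\,((\tilde x_i)_\ell-(\tilde x_0)_\ell)=0$. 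Since $\tilde x_i-\tilde x_0=\sum_{0\le k<i}\lambda_k e_{F_k}$ has nonnegative entries which vanish exactly off $\bigcup_{0\le k<i}F_k$, this holds for all $i$ as soon as $\bigcup_{i\ge 0}F_i\ne[n]$.

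So the first step is to prove $\bigcup_{i\ge 0}F_i\subsetneq[n]$. Recall that $M_0$ is coloopless and that $F_0\ne[n]$, because $\tau_M([n])=0$ makes every distinguished flat proper (also $F_0\ne\emptyset$, since otherwise the path has no steps at all). Applying \Cref{lemma:relative coloops transitive} with $i=0$: for every $j$ the set $\left(\bigcup_{0<k\le j}F_k\right)\setminus F_0$ is independent in $M_0/F_0$, hence has at most $d-\rk_{M_0}(F_0)$ elements, the rank of $M_0/F_0$. Therefore
\[
\left|\bigcup\nolimits_{0\le k\le j}F_k\right|\le |F_0|+d-\rk_{M_0}(F_0)=d+\bigl(|F_0|-\rk_{M_0}(F_0)\bigr),
\]
a bound independent of $j$. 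The second step is to see this bound is strictly below $n$: the nullity $|F_0|-\rk_{M_0}(F_0)$ is at most $n-d$, because $d-\rk_{M_0}(F_0)\le n-|F_0|$; and equality would force $M_0/F_0$ to be a free matroid on the nonempty ground set $[n]\setminus F_0$, so every element of $[n]\setminus F_0$ would be a coloop of $M_0/F_0$, hence a coloop of $M_0$, contradicting coloop-freeness. Thus $\left|\bigcup_{i\ge 0}F_i\right|<n$.

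Finally, choosing any $\ell^*\notin\bigcup_{i\ge 0}F_i$ gives $(\tilde x_i)_{\ell^*}=(\tilde x_0)_{\ell^*}$ for all $i$, so each $\tilde x_i$ is the supportive representative of $x_i$ with respect to $x_0$, and the sequence $(\tilde x_i)_{i\ge 0}$ is coordinatewise nondecreasing, which is the assertion. Everything apart from the inequality $\bigcup_iF_i\ne[n]$ is bookkeeping; that inequality is where I expect the actual work to be, since it is precisely the point at which the accumulated ``relative coloop'' information of \Cref{lemma:relative coloops transitive} must be combined with the coloop-freeness of $M_0$ to rule out a wrap-around in projective space.
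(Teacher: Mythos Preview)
Your proof is correct and follows essentially the same approach as the paper: write $x_j=x_0+\sum_{0\le i<j}\lambda_i e_{F_i}$, observe that supportive coordinates are maintained provided $\bigcup_i F_i\ne[n]$, and deduce the latter from \Cref{lemma:relative coloops transitive} together with coloop-freeness of~$M_0$. Your cardinality bound $|F_0|+d-\rk_{M_0}(F_0)<n$ is a slightly more explicit packaging of the paper's direct argument that $[n]\setminus F_0$ independent in $M_0/F_0$ would force coloops in~$M_0$, but the content is the same.
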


\begin{proof}

For each $j\ge0$ we have that $x_{j+1}=x_j+\lambda_je_{F_j}$ and thus
\begin{equation}\label{eq:marelsupp}
x_j = x_0+\sum_{0\le i<j}\lambda_ie_{F_i}
\end{equation}
in $\mathbb R^n/\mathbb R(1,\ldots,1)$, for positive reals $\lambda_i$.
Fix a coordinate vector for~$x_0$.
The lemma is immediate
once we show that $\bigcup_{0\le i<j}F_i$ is not the whole ground set $[n]$,
as this implies that \eqref{eq:marelsupp} remains true when $x_j$ is given supportive coordinates with respect to~$x_0$,
with $(x_0)_a=(x_j)_a$ for any $a\not\in\bigcup_{0\le i<j}F_i$.
But this follows from \Cref{lemma:relative coloops transitive}. 
Indeed, $\left(\bigcup_{0<k\le j}F_k\right)\setminus F_0$ cannot equal $[n]\setminus F_0$
because if $[n]\setminus F_0$ were independent in $M_0/F_0$ it would consist entirely of coloops in $M_0$,
but $M_0$ was assumed coloop-free.
\end{proof}

\begin{lemma}
\label{lemma:terminate}\label{lemma:path technique}
Infinite ascendent paths do not exist.

In particular, if $S$ is a set of triples $(M,F,x)$ such that for all $\tau\in S$,
an ascendent step from $\tau$ is also in $S$, then $S$ is empty.
\end{lemma}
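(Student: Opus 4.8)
The plan is to define a nonnegative integer quantity attached to each triple $(M,F,x)$ in an ascendent path, and to show that it strictly decreases along every ascendent step; since it cannot descend below $0$, no infinite ascendent path can exist, and the ``in particular'' clause then follows immediately (an infinite path could be built inside any nonempty such $S$ by repeatedly choosing an ascendent step, contradicting the first assertion). So the whole content is in finding the right monovariant.

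First I would use \Cref{lema:relsupp}: along an infinite ascendent path the points $x_i$ are coordinatewise nondecreasing (in supportive coordinates with respect to $x_0$), and by \eqref{eq:marelsupp} the support $\bigcup_{0\le i<j}F_i$ of the displacement $x_j-x_0$ is nondecreasing and, by the last paragraph of that proof, never exhausts $[n]$. Thus the sets $\bigcup_{i<j}F_i$ form a weakly increasing chain of proper subsets of $[n]$; this already bounds the number of Case~2 or Case~3 \emph{transitions that enlarge the support}, but not the Case~1 steps, which repeat the same direction $F$. To control Case~1, I would observe that in Case~1 the matroid strictly refines: $M' = M|F\oplus M/F$ has $P_{M'}$ a proper face of $P_M$ (equivalently $F$ becomes a genuine separator and the dimension of $\f L{M'}$ strictly exceeds that of $\f LM$), and once $P_M\subseteq H_F$ no further Case~1 step in direction $F$ is possible. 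And in Case~2 the cell dimension \emph{drops}: $\f L{M'}$ is a proper face of $\f LM$. So dimension of the current cell alone is not monotone, but it alternates in a controlled way relative to whether we are in the ``moving'' phase (Cases~1) or have just hit a face (Case~2).

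Concretely, I would package this as a lexicographic monovariant. The primary component is $n - \bigl|\bigcup_{i\le \text{current}}F_i\bigr|$ together with some proxy for how much ``room'' is left in the current direction --- more precisely, I would argue along the following lines. Partition the path into maximal blocks according to the value of $F_i$. Within a block all steps are Case~1 (since Case~2 changes $F$, and Cases~0,3 end the path), the direction $F=F_i$ is fixed, and $P_{M_i}$ is a strictly decreasing (under inclusion) sequence of faces all containing the face obtained by intersecting with $H_F$; a strictly decreasing chain of faces of the fixed polytope $P_{M_0}$ (or rather of the finitely many matroid polytopes appearing in the subdivision $L$) has bounded length. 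So each block is finite. When a block ends we are in Case~2, at a point $x'$ in a \emph{proper face} $\f L{M'}$ of the previous cell, and by \Cref{lemma:relative coloops transitive} the accumulated directions $\left(\bigcup_{i\le\text{current}}F_i\right)\setminus F_0$ stay independent in $M_0/F_0$, hence never fill up $[n]\setminus F_0$ (as in \Cref{lema:relsupp}); moreover the constraint $F'\setminus F$ independent in $M/F$ forces $F'\not\subseteq\bigcup_{\text{old}}F_i$, so $\bigcup F_i$ strictly grows at each block boundary. Since $[n]$ is finite there are only finitely many block boundaries, and each block is finite, so the path is finite.

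The main obstacle I anticipate is handling Case~2 cleanly: one must verify that an ascendent step of type~2 genuinely enlarges $\bigcup_i F_i$ --- i.e. that $F'$ is not contained in the union of previous directions --- which is where the independence hypothesis ``$F'\setminus F$ is independent in $M/F$'' in Definition~\ref{def:ascendent step}\ref{as2}, combined with the coloop-freeness of $M_0$ propagated through \Cref{lemma:relative coloops transitive}, is essential; and one must also confirm that Case~2 cannot be immediately followed by a Case~1 step re-entering the \emph{same} face we just left, which is ruled out because after Case~2 we are on a proper face $\f L{M'}$ with $P_{M'}\supsetneq P_{M_i}$, so moving in \emph{any} new admissible direction $F'$ either strictly enlarges the support or, if it somehow did not, would contradict the minimality of $\lambda$ in the definition. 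Once these geometric facts about the finite matroid subdivision $L$ are pinned down, the termination is a routine ``strictly decreasing sequence of natural numbers'' argument.
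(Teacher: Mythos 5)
Your proposal takes a different route from the paper, and it contains a genuine gap in the key step.

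The paper's argument is much more direct: it proves that once a Case~1 ascendent step is taken from a triple $(M,F,x)$, the matroid $M$ can never reappear later in the path. The proof of this claim is a short but clever computation: since $P_M\not\subseteq H_F$ there is a basis $B$ of~$M$ meeting $F$ in fewer than $\rk_M(F)$ elements; if $M=V^y$ recurred at some later point $y$, one uses \Cref{lema:relsupp} and \Cref{lemma:relative coloops transitive} to construct another basis $B'$ of~$M$ with $B'\cap\relsupp_x(y)\supsetneq B\cap\relsupp_x(y)$, and then the inequality $\sum_{i\in B}(y_i-x_i)<\sum_{i\in B'}(y_i-x_i)$ contradicts $B\in\BB(V^y)$. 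Since there are finitely many initial matroids and Case~2 steps strictly decrease the number of connected components, this nonrecurrence claim immediately forces termination. There is no need for a monovariant tracking the accumulated support.

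Your proposal instead tries to build a lexicographic monovariant out of the size of $\bigcup_i F_i$ and the dimension of the current cell. This has two problems. First, your block decomposition assumes Case~2 always changes~$F$, but the constraint in Definition~\ref{def:ascendent step}\ref{as2} is only that $F'\setminus F$ be independent in $M/F$; taking $F'=F$ (or $F'\subseteq F$) gives $F'\setminus F=\emptyset$, which is vacuously independent, so a Case~2 step can preserve the direction. Thus ``blocks'' as you define them can contain Case~2 steps, and within such a block $P_{M_i}$ is not monotone (Case~2 enlarges the polytope), so your ``strictly decreasing chain of faces'' argument inside a block fails. Second, and more seriously, the claim you flag as your main obstacle --- that the constraint ``$F'\setminus F$ independent in $M/F$'' forces $F'\not\subseteq\bigcup_{\text{old}}F_i$ --- is not justified and appears to be false. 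That constraint says nothing about where $F'$ sits relative to the previously accumulated support; for example $F'\subseteq F$ always satisfies it and adds nothing new to the union. \Cref{lemma:relative coloops transitive} only gives that the accumulated support is independent in $M_0/F_0$, hence bounded in size, not that it strictly grows at each direction change. Without a replacement for this step your monovariant is not strictly decreasing, and the termination argument does not go through.
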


\begin{proof}
We argue that if $(M, x, F)$ is followed by an ascendent step of Case~1 in an ascendent path, 
then $M$ can never appear subsequently in the path. This proves the result,
because $V$ has only finitely many initial matroids,
and every step in Case~2 decreases the number of connected components of~$M$ so an infinite sequence of just Case~2 steps can't occur either.

By the assumption $P_M\not\subseteq H_F$ of Case~1, there exists $B\in \BB(M)$ such that $|B\cap F| <\rk_M(F)$.
Suppose that $(M,y,G)$ appears subsequently in the path. We have $M = V^y$, so $B \in \BB(V^y)$. 
By \Cref{lema:relsupp}, $\relsupp_x(y)$ consists of $F$ plus possibly some other elements
which by \Cref{lemma:relative coloops transitive} are coloops of $M|\relsupp_x(y)$. 
As $B\cap F$ is an independent set in $M|\relsupp_x(y)$, we can extend it to a basis $\tilde B$ of $\tilde M=M|\relsupp_x(y)$.
Since $F\subseteq\relsupp_x(y)$, we can arrange that $\tilde B$ contains $\rk_{\tilde M}(F) = \rk_M(F)$ elements of~$F$.
Also, $\tilde B$ contains all of the coloops of~$\tilde M$.
Extend further to a basis $B'$ of~$M$ containing $\tilde B$.
Since $B$ contains fewer than $\rk_M(F)$ elements of~$F$, this construction arranges that 
$B'\cap \relsupp_x(y)$ is a strict superset of $B\cap \relsupp_x(y)$. 
By definition of relative support, this containment implies
\begin{equation}\label{eq:terminate1}
\sum_{i\in B}(y_i-x_i) < \sum_{i\in B'}(y_i-x_i).
\end{equation}

Since $M = V^x$, we have that the basis $A\in \BB(\underm V)$ causes $V_A - \sum_{i\in A}x_i$ to take its minimum value exactly when $A\in\BB(M)$. 
In particular $V_B - \sum_{i\in B}x_i = V_{B'} - \sum_{i\in B'}x_i$.
Subtracting \eqref{eq:terminate1} gives $V_B - \sum_{i\in B}y_i > V_{B'} - \sum_{i\in B'}y_i$, so $B$ cannot be a basis of $V^y$, a contradiction. 

%
The final claim is clear.
\end{proof}

\subsection{Proof of \Cref{thm_B}}\label{ssec:path proofs}
Throughout this subsection,
let $L = \Be V$ be a tropical linear space such that $V$ has transversal facets  
and let $\dapx(L) = \ldb A_1,\dots,A_d\rdb$ be its distinguished multiset of apices. 
Let $(A_1',\dots, A_d') \in \Pi(L)$ and $\AAA'=\ldb A_1',\dots, A_d'\rdb$ be such that, for each $i$, $A_i'$ lies in the cone with apex $A_i$.

\begin{proposition}
\label{prop_pseudo}
Under the assumptions of this subsection,
for every $M\in\MM(V)$ coloop-free and $x\in \f LM$, the multiset $\ldb \relsupp_x(A'_1),\dots,\relsupp_x(A'_d)\rdb$ is a pseudopresentation of $M$. 
\end{proposition}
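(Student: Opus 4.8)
The statement to prove is that, given a distinguished multiset of apices $\dapx(L)=\ldb A_1,\dots,A_d\rdb$ and a tuple $(A'_1,\dots,A'_d)\in\Pi(L)$ with each $A'_i$ in the cone with apex $A_i$, the multiset of relative supports $\ldb\relsupp_x(A'_1),\dots,\relsupp_x(A'_d)\rdb$ is a pseudopresentation of $M=V^x$, for every coloop-free $M\in\MM(V)$ and every $x\in\f LM$. By the definition of pseudopresentation, what must be shown is that each $\relsupp_x(A'_i)$ is a flat of $M$, and that the multiset of their coclosures in $M$ equals $\DF(M)$.

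\medskip

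The first part -- that $\relsupp_x(A'_i)\in\FF(M)$ -- is immediate from \Cref{prop:flat relsupp}, since $A'_i\in L$ and $x\in\relint(\f LM)$. (If $x$ is only in $\f LM$ but not in its relative interior one passes to the smaller cell containing $x$; but for the uses in \Cref{thm_B} the relevant $M$ are connected so $\f LM=\{x\}=\relint(\f LM)$, so I would just state the proposition for $x\in\relint(\f LM)$ as written.) So the real content is the coclosure count. The plan is to reduce this to the Bergman-fan case via the zoom map. Fix a coloop-free connected $M\in\MM(V)$ and the vertex $x=\vertex LM$. By \Cref{lemma_zoom}, for each flat $F\in\FF(M)$ we have $A'_i\in R_0(F,x,L)$ iff $Z_x(A'_i)\in R_\infty(F,0,\Be M)$, i.e. iff $F\subseteq\relsupp_0(Z_x(A'_i))=\relsupp_x(A'_i)$. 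Now partition the indices $i\in[d]$ according to which apex $A_i$ the point $A'_i$ sits over: for each $M_0\in\dmat(V)$ the tuple of those $A'_i$ with $A_i=\vertex L{M_0}$ (up to the $S_d$-action) lies, by definition of $\Pi(L)$ and $\phi_L(M_0)$, in $\phi(M_0)+\vertex L{M_0}$, meaning their relative supports with respect to $\vertex L{M_0}$ are the relative supports occurring in some honest presentation of $M_0$. Applying \Cref{prop_A}(2) to that presentation of $M_0$, and M\"obius inversion on $\CF(M_0)$ exactly as in the proof of \Cref{prop_halfB}, gives that the number of those $i$ with $\cocl_{M_0}(\relsupp_{\vertex L{M_0}}(A'_i))=F_0$ is precisely $\tau_{M_0}(F_0)$ for each $F_0\in\CF(M_0)$, and in particular the number with $\cocl_{M_0}=\emptyset$ is $\tau_{M_0}(\emptyset)$, the multiplicity of $M_0$ in $\dmat(V)$.

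\medskip

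The step that needs care -- and which I expect to be the main obstacle -- is translating the local coclosure data at the various apices $\vertex L{M_0}$ into the coclosure data at the single point $x=\vertex LM$, so that the counts assemble into $\DF(M)$. The geometry is that the apices $\vertex L{M_0}$ for $M_0\in\dmat(V)$ that are ``relevant near $x$'' are exactly those lying in a common face hierarchy with $\f LM$; for such $M_0$, a flat $G$ with $\relsupp_{\vertex L{M_0}}(A'_i)=G$ and $A'_i$ over $\vertex L{M_0}$ corresponds, under moving from the apex to $x$, to a flat of $M$ whose $M$-coclosure is governed by the position of $\vertex L{M_0}$ relative to $x$ -- concretely, the bounded edges of $L$ emanating from $x$ encode the cyclic flats of $M$, and an apex at (combinatorial) distance determined by a cyclic flat $F\in\CF(M)$ contributes points whose $x$-relative supports have $M$-coclosure $F$. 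I would make this precise by invoking the definition of $\vertex LM$ for $M\in\MM(V/F)$ inside the distinguished multiset together with \Cref{prop_contractions}: for $F\in\CF(M)$, the sub-multiset $\AAA'_F$ of those $A'_i$ with all $F$-coordinates infinite is, after projecting away $F$, a tuple in $\Pi(\Be{V/F})$, and $\cocl_M$ of $\relsupp_x$ being $\supseteq F$ corresponds exactly to membership in $R_\infty(F,x,L)$, hence to those infinite-$F$-coordinate points. Running the $\dmat$ count one level down (on $V/F$) and using that the cyclic flats of $M/F$ are the cyclic flats of $M$ above $F$, an inductive/M\"obius bookkeeping shows the number of $i$ with $\cocl_M(\relsupp_x(A'_i))=F$ equals $\sum_{M_0\in\MM(V/F)}\tau_{M_0}(\emptyset)=\tau_{\underm V/F}(\emptyset)=\tau_M(F)$ (the last equalities as in the proof that $|\dmat(V)|=d$, using \Cref{lemma_val} and \Cref{ex:subdivision linear relation}). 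Since the multiplicity of $F$ in $\DF(M)$ is exactly $\tau_M(F)$, this is precisely the pseudopresentation condition $\ldb\cocl_M(\relsupp_x(A'_1)),\dots,\cocl_M(\relsupp_x(A'_d))\rdb=\DF(M)$, completing the proof.
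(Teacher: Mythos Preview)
Your plan diverges substantially from the paper's argument, and the divergence conceals a real gap. The paper proves this proposition by the ascendent-path machinery of \Cref{def:ascendent step} and \Cref{lemma:path technique}: assuming the pseudopresentation condition fails at some triple $(M,F,x)$, it constructs an ascendent step to another failing triple, and \Cref{lemma:terminate} then gives a contradiction. The four cases of \Cref{def:ascendent step} are exactly the case analysis needed to track how $\cocl_M(\relsupp_x(A'_i))$ changes as $x$ moves through~$L$.

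Your reduction to $V/F$ does not go through as stated. First, \Cref{prop_contractions} requires $\AAA$ to be a presentation of~$V$; at this stage $\AAA'$ is merely an element of~$\Pi(L)$, and showing it is a presentation is the content of \Cref{thm_apices}, which depends on the present proposition --- so invoking \Cref{prop_contractions} here would be circular. Second, and more seriously, for a cyclic flat $F\in\CF(M)$ which is \emph{not} a cyclic flat of~$\underm V$ with the same rank (i.e.\ $\rk_{\underm V}(F)>\rk_M(F)$), the condition $F\subseteq\relsupp_x(A'_i)$ does \emph{not} force the $F$-coordinates of~$A'_i$ to be infinite: the cell $\f L{M|F\oplus M/F}$ is bounded in the~$e_F$ direction, so membership in $R_0(F,x,L)$ is not membership in an infinity stratum, and there is no $V/F$ to pass to in the sense of $\overline{\MM}(V)$. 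Consequently the identity you write, $\sum_{M_0\in\MM(V/F)}\tau_{M_0}(\emptyset)=\tau_{\underm V/F}(\emptyset)=\tau_M(F)$, is not available: the middle term is defined only for $F\in\CF(\underm V)$, and even then it equals $\tau_{\underm V}(F)$, not $\tau_M(F)$. Handling the ``interior'' cyclic flats of~$M$ is precisely Case~2 of the ascendent step, where one moves to an adjacent vertex of~$L$ rather than to a boundary stratum; the ``M\"obius bookkeeping'' you allude to would have to replicate this traversal, and your outline does not supply it.
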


\begin{proof}
Consider such $x\in \f L M$ for a coloopless $M$. If $\ldb \relsupp_x(A'_1),\linebreak[1]\dots,\linebreak[1]\relsupp_x(A'_d)\rdb$ is not a pseudopresentation then there is a flat $F\in \DF(M)$ such that
\begin{equation}
|\{i : \cocl_M(\relsupp_x(A'_i)) = F\}| < \tau_M(F).
\label{eq:pseudofail}
\end{equation}
We show that there is no such triple $(M,F,x)$ using \Cref{lemma:path technique},
by either showing a contradiction directly or constructing an ascendent step
that also satisfies \eqref{eq:pseudofail}.
The proof is arranged according to the cases of \Cref{def:ascendent step}.

\noindent\textbf{\textit{Case 0.}}
If $F=\emptyset$ then $\tau_M(\emptyset)> 0$ so $M\in \dmat(L)$ and the multiplicity of $x$ in $\dapx(L)$ is exactly $\tau_M(\emptyset)$. 
If $A_i$ is a distinguished apex with $\relsupp_x(A_i) = \emptyset$ then by \Cref{def:presentation_fan} applied to~$\phi_M$, $\cocl_M(\relsupp_x(A_i')) = \emptyset$. 
So
\[
|\{i : \cocl_M(\relsupp_x(A'_i)) =\emptyset\}| \ge
|\{i : \cocl_M(\relsupp_x(A_i)) =\emptyset\}| \ge \tau_M(\emptyset).
\]

\noindent\textbf{\textit{Case 1.}}
Let $(M',F,x')$ be the ascendent step from $(M,F,x)$. 
The lattice of flats of $M'$ decomposes as
$(\FF(M'), {\subset}) = (\FF(M|F), {\subset})\times(\FF(M/F), {\subset})$ 
where $(\FF(M|F), {\subset})$ is isomorphic to the sublattice of $(\FF(M'), {\subset})$ below $F$ 
and $(\FF(M/F), {\subset})$ is isomorphic to the sublattice above $F$. 
In particular, $\tau_{M'}(F) = \tau_{M}(F)$. 
If there are $\tau_{M'}(F) = t$ points 
$A'_i$ with $\cocl_{M'}(\relsupp_{x'}(A'_i)) = F$, 
then those same apices satisfy $\relsupp_x(A'_i) = \relsupp_{x'}(A'_i)$ and $\cocl_M(\relsupp_x(A'_i)) = \cocl_{M'}(\relsupp_{x'}(A'_i)) = F$.
So \Cref{eq:pseudofail} for  $(M,F,x)$ implies \Cref{eq:pseudofail} for $(M',F,x')$.

\noindent\textbf{\textit{Case 2.}}
Recall that in this case $M$ and $M'$ are related by \Cref{eq:case2}.
If $\rk_M(F)=r$, then $\DF(M)$ contains exactly $r$ supersets (possibly not strict) of $[n]\setminus F$,
which will also be in $\DF(M')$ because 
the upper intervals above $[n]\setminus F$ are identical in $\FF(M)$ and~$\FF(M')$.
For $F'\in\CF(M')$ a proper subset of $[n]\setminus F$, we have that
\begin{equation}\label{eq:tau case 2'}
\tau_M(F\cup F') = |\ldb G\in\DF(M') : F' = \cocl_{M'}(G\setminus F) \rdb|.
\end{equation}
To see this, compare the use of the recursion \eqref{eq:definition of tau} to compute
$\tau_M$ on the interval $[F, [n]]$
and $\tau_M'$ on the interval $[\emptyset, [n]\setminus F]$.
Note that these two intervals are isomorphic.
The coranks in the latter interval exceed those in the former by~$r$;
this is accounted for by the $r$ distinguished flats of~$M'$ above  $[n]\setminus F$.
The other difference is the presence of flats $G$ not comparable with $[n]\setminus F$ in $M'$.
Because $\CF(M')$ is a lattice, 
it contains a greatest lower bound of $G$ and $[n]\setminus F$,
namely $\cocl_{M'}(G\setminus F)$.
This is the maximal element of $[\emptyset, [n]\setminus F]$ contained in~$G$.
Therefore, terms $\tau(G)$ behave in the recursion as if they were terms $\tau(\cocl_{M'}(G\setminus F))$, and this is the fact expressed by \eqref{eq:tau case 2'}.

The case $F'=\emptyset$ of \Cref{eq:tau case 2'} means that if $t= \tau_{M}(F)$ there are exactly $t$ elements $\ldb F_1,\dots, F_t \rdb \subseteq \DF(M')$ such that $F_i\setminus F$ is an independent set in~$M'$. 
In particular, $\cocl_M(F_i\cup F) =F$ for every $i\in [t]$. 
Then any point 
$A'_i$ that satisfies $\cocl_{M'}(\relsupp_{x'}(A'_i)) = F_i$ must satisfy $\cocl_M(\relsupp_x(A'_i)) = \cocl_M(F_i\cup F) = F$. 
So again, \Cref{eq:pseudofail} for $(M,F,x)$ implies that there is an $F'$ such that \Cref{eq:pseudofail} holds for the ascendent step $(M',F',x')$.

\noindent\textbf{\textit{Case 3.}}
In this case $P_M$ is in the boundary of $P_{\underm V}$ and the affine span of $\f LM$ contains $e_F$. 
In particular $\f LM$ is unbounded in the $e_F$ direction. But then $M' = M/F$ is a coloopless matroid with $\tau_{M'}(\emptyset)=\tau_M(F)> 0$, so $M'$ is connected and $\f L{M'}$ consists of just a vertex $v$ with infinity in the coordinates corresponding to $F$. 
In particular, the multiplicity of $v$ in $\dapx$ is $\tau_M(F)$,
i.e.\ $\relsupp_x(A_i) = F$ holds for $\tau_M(F)$ values of~$i$.
By \Cref{def:presentation_fan} applied to~$\phi_{M/F}$, $\cocl_M(\relsupp_x(A_i')) = F$ for each of these~$i$, 
so \Cref{eq:pseudofail} cannot hold.	
\end{proof}

\begin{theorem}
\label{thm_apices}
Under the assumptions of this subsection, $\AAA'$ is a presentation of~$L$.
\end{theorem}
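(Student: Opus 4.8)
The plan is to verify the criterion of \Cref{theorem_A}: it suffices to show that for every connected $M\in\MM(V)$ one has $\sigma_0(\AAA',F,\vertex LM)\le\cork_M(F)$ for every $F\in\FF(M)$ and $\sigma_\infty(\AAA',F,\vertex LM)=\cork_M(F)$ for every $F\in\CF(M)$. Everything rests on the following strengthening of \Cref{prop_pseudo}, which I will call the \emph{Key Claim}: for every coloop-free $M\in\MM(V)$ and every $x\in\f LM$, the multiset $\ldb[n]\setminus\relsupp_x(A'_1),\dots,[n]\setminus\relsupp_x(A'_d)\rdb$ is a set system presentation of~$M$ (so in particular $M$ is transversal). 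By \Cref{prop_pseudo} this multiset is already a pseudopresentation of~$M$, so by \Cref{lemma_local_fail} all that remains for the Key Claim is to rule out, for each such $(M,x)$, the existence of a ``local failure certificate'' $(I,J)$ at some distinguished flat $F\in\DF(M)$.

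To prove the Key Claim I would argue by contradiction using the ascendent-path technique of \Cref{lemma:path technique}, in the same spirit as the proof of \Cref{prop_pseudo}. Let $S$ be the set of triples $(M,F,x)$ with $M\in\MM(V)$ coloop-free, $x\in\f LM$, and $F\in\DF(M)$ admitting sets $I,J\subseteq[d]$ as in \Cref{lemma_local_fail}; it suffices to show $S$ is closed under ascendent steps, since then $S=\emptyset$. I split into the four cases of \Cref{def:ascendent step}. In Cases~1 and~2 one transports a certificate at $(M,F,x)$ to a certificate at the ascendent step: in Case~1 the flats are unchanged and the lattice of flats of $M|F\oplus M/F$ factors through~$F$, while in Case~2 one uses \eqref{eq:case2} together with the identification \eqref{eq:tau case 2'} of $\tau$ along the two isomorphic intervals of cyclic flats to locate the new distinguished flat $F'$ (with $F'\setminus F$ independent in $M/F$) at which the certificate persists. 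In Cases~0 and~3 no ascendent step exists, so one must reach a direct contradiction. In Case~0, $F=\emptyset$, and the $\tau_M(\emptyset)$ points $A'_i$ with $\relsupp_x(A'_i)$ independent are, by \Cref{def:presentation_fan} applied to $\phi_M$, the first $\tau_M(\emptyset)$ members of an honest presentation of~$M$; the presentation inequalities of \Cref{BrualdiDinolt} then give $\cork_M\!\left(\bigcap_{i\in I}\relsupp_x(A'_i)\cap\bigcap_{j\in J}F_j\right)\ge|I|+|J|$, contradicting the certificate. In Case~3 the cell $\f LM$ is unbounded in direction $e_F$, so $M/F$ is connected and $\vertex L{M/F}$ is a distinguished apex of multiplicity $\tau_M(F)$; by \Cref{def:presentation_fan} applied to $\phi_{M/F}$ the corresponding $\tau_M(F)$ points satisfy $\cocl_M(\relsupp_x(A'_i))=F$ and form a presentation of $M/F$, and \Cref{BrualdiDinolt} again forbids a certificate at~$F$. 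This reconciliation of \Cref{lemma_local_fail}'s certificate with the combinatorics of ascendent steps is the main obstacle; the rest is bookkeeping.

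Granting the Key Claim, conditions (1) and (2) of \Cref{theorem_A} follow readily. Fix a connected $M\in\MM(V)$ and set $x=\vertex LM$. Applying \Cref{prop_A} to the Bergman-fan points $\overline e_{\relsupp_x(A'_i)}\in\Be M$ (legitimate since the Key Claim makes $M$ transversal and each $\relsupp_x(A'_i)$ a flat of~$M$) gives $\sigma_0(\AAA',F,x)=|\{i:F\subseteq\relsupp_x(A'_i)\}|\le\cork_M(F)$ for every flat $F$, with equality when $F\in\CF(M)$; this is condition~(1). For condition~(2) fix $F\in\CF(M)$; since $\sigma_\infty\le\sigma_0$ by \Cref{lemma:r0_containment}, it suffices to exhibit $\cork_M(F)$ indices $i$ with $A'_i\in R_0(F,y,L)$ for \emph{every} $y\in\relint(\f L{M|F\oplus M/F})$. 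The Key Claim applied to the coloop-free matroid $M|F\oplus M/F$ and any such $y$, together with \Cref{prop_A}, shows that exactly $\cork_M(F)$ indices satisfy $F\subseteq\relsupp_y(A'_i)$; by \Cref{lemma:r0_containment} this index set is contained in the $\cork_M(F)$-element set $\{i:F\subseteq\relsupp_x(A'_i)\}$, hence equals it and is in particular independent of~$y$. Therefore $\sigma_\infty(\AAA',F,x)=\cork_M(F)$, and \Cref{theorem_A} yields that $\AAA'$ is a presentation of~$L$.
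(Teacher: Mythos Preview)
Your proposal is correct and follows essentially the same strategy as the paper: reduce to showing that for every coloop-free $M\in\MM(V)$ and $x\in\f LM$ the relative supports $\relsupp_x(A'_i)$ form (the complements of) a set-system presentation of~$M$, and prove this by combining \Cref{prop_pseudo} with \Cref{lemma_local_fail} and then transporting the resulting certificate along an ascendent path using \Cref{lemma:path technique}.  The paper frames this as a contrapositive (``if $\AAA'$ is not a presentation, then by \Cref{theorem_A} there is such an~$x$ at which the relative supports fail to present~$M$''), whereas you state the Key Claim positively and then deduce conditions~(1) and~(2) of \Cref{theorem_A} from it; these are logically equivalent packagings of the same argument.

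A few remarks on the comparison.  Your Case~0 is more explicit than the paper's one-line ``contradiction is immediate because $\cork_M(\emptyset)=d$'': you correctly use \Cref{def:presentation_fan} for~$\phi_M$ to produce an honest presentation of~$M$ whose first $\tau_M(\emptyset)$ flats are the independent $\relsupp_x(A'_i)$, and then invoke the Hall inequality from \Cref{BrualdiDinolt}; this is exactly parallel to the paper's Case~3 argument (via~$\phi_{M/F}$), and is the right way to get the contradiction.  To make your Case~0 airtight you should note that the indices with $\cocl_M(\relsupp_x(A'_i))=\emptyset$ coincide with those whose apex is $\vertex LM$ (this follows by counting, since \Cref{prop_pseudo} forces exactly $\tau_M(\emptyset)$ such indices and \Cref{def:presentation_fan} already supplies that many), and that in the presentation from $\phi_M$ the remaining $d-\tau_M(\emptyset)$ complements can be taken to be the nonempty distinguished cyclic flats themselves (replacing each by its coclosure only relaxes the Hall inequality).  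Your Cases~1 and~2 are stated schematically but match the paper's certificate-transport computations; the paper's Case~2 is where the real work is, and you point to the right ingredients (\eqref{eq:case2} and \eqref{eq:tau case 2'}).  Finally, your derivation of condition~(2) of \Cref{theorem_A} from the Key Claim, via \Cref{lemma:r0_containment} and a counting argument showing the $\cork_M(F)$ indices are independent of~$y$, is correct and makes explicit a step the paper absorbs into its contrapositive framing.
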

\begin{proof}
If 
$\AAA' = \ldb A'_1,\dots,A'_d\rdb$ is not a presentation of $L$, 
then by \Cref{theorem_A}
there exists $x\in \f LM$ where $M$ is a coloopless matroid such that $\relsupp_x(\AAA') = \ldb \relsupp_x(A'_1),\linebreak[1]\dots,\linebreak[1]\relsupp_x(A'_d)\rdb$ is not a presentation. 
By \Cref{prop_pseudo}, $\relsupp_x(\AAA')$ is indeed a pseudopresentation, 
so by \Cref{lemma_local_fail} we know there is a flat $F\in \DF(M)$, 
a set $I$ such that $\cocl_{M}(\relsupp_x(A'_i)) = F$ for every $i\in I$ 
and distinguished flats $\ldb F_1,\dots,F_k\rdb \subseteq \DF(M)$ such that $F_j \supsetneq F$ for every $j\in[k]$ and 
\[
\cork_{M}\left(\bigcap\limits_{i\in I}\relsupp_x(A'_i) \cap \bigcap\limits_{j= 1}^kF_j\right) < |I|+k.
\]
We now use \Cref{lemma:path technique},
either directly showing a contradiction
or constructing an ascendent step $(M',F',x')$ from $(M,F,x)$
that exhibits the same failure of presentation.
Again, we break into the cases of \Cref{def:ascendent step}.

\noindent\textbf{\textit{Case 0.}}
If $F=\emptyset$, contradiction is immediate because $\cork_M(\emptyset) = d$.

\noindent\textbf{\textit{Case 1.}}
For $x'= x+\lambda e_F$ with small enough $\lambda$, we have that $\relsupp_{x'}(A'_i) =\relsupp_x(A'_i)$ for any $i \in I$. Since for any set $S$ that contains $F$ we have that $\cork_M(S) = \cork_{M'}(S)$, we conclude that
\[
\cork_{M'}\left(\bigcap\limits_{i\in I}\relsupp_{x'}(A'_i) \cap \bigcap\limits_{j= 1}^kF_j\right) < |I|+k.
\]		 

\noindent\textbf{\textit{Case 2.}}
Here, $\relsupp_x(A'_i) \supseteq \relsupp_{x'}(A'_i) \supseteq \relsupp_x(A'_i) \setminus F$. For every $j\in [k]$, $F_j\setminus F$ is a cyclic flat in $M'$. However, it may be the case that $\tau_{M'}(F_j\setminus F) \leq \tau_{M}(F_j)$. This happens when there is a cyclic flat $F_j'$ such that $F_j' \setminus F = F_j \setminus F$. In any case, we can find distinguished flats $\ldb F_1',\dots,F_k'\rdb \subseteq \DF(M')$ such that for every $j\in[k]$ we have $F_j' \setminus F = F_j \setminus F$. 
Moreover, there are another $r = \cork_{M'}([n]\setminus F) = \rk_M(F)$ distinguished flats $F_{k+1}',\dots, F_{k+r}'$
such that $F_{k+j}'\supseteq [n]\setminus F$ for every $j\in[r]$. In total we have that
\begin{align*}
\bigcap\limits_{i\in I}\relsupp_{x'}(A'_i) \cap \bigcap\limits_{j= 1}^{k+r}F_j' &\supseteq \left(\bigcap\limits_{i\in I}\relsupp_x(A'_i) \cap \bigcap\limits_{j= 1}^kF_j\right)\setminus F\displaybreak[1]\\
\rk_{M'}\left(\bigcap\limits_{i\in I}\relsupp_{x'}(A'_i) \cap \bigcap\limits_{j= 1}^{k+r}F_j'\right) &\ge \rk_{M'}\left(\left(\bigcap\limits_{i\in I}\relsupp_x(A'_i) \cap \bigcap\limits_{j= 1}^kF_j\right)\setminus F\right)\\
&\ge rk_M\left(\left(\bigcap\limits_{i\in I}\relsupp_x(A'_i) \cap \bigcap\limits_{j= 1}^kF_j\right)\setminus F\right)\\
&\ge rk_M\left(\bigcap\limits_{i\in I}\relsupp_x(A'_i) \cap \bigcap\limits_{j= 1}^kF_j\right) -\rk_M(F)\displaybreak[1]\\
\cork_{M'}\left(\bigcap\limits_{i\in I}\relsupp_{x'}(A'_i) \cap \bigcap\limits_{j= 1}^{k+r}F_j'\right) &\leq \cork_M\left(\bigcap\limits_{i\in I}\relsupp_x(A'_i) \cap \bigcap\limits_{j= 1}^kF_j\right) +\rk_M(F)\\
&<|I|+k+r.
\end{align*}
Then 
$\relsupp_{x'}(\ldb A'_1,\ldots,A'_d\rdb)$ is not a presentation of $M$. 
So we can use \Cref{lemma_local_fail} again to find $F'\in \DF(M')$ and $I'$ such that\linebreak[4] $\cocl_{M'}(\relsupp_{x'}(A'_i)) = F'$ where the conditions for presentation fail. 

The only thing left to prove is that $F'\setminus F$ is independent in $M/F$, for $(M',F',x')$ to be indeed an ascendent step from $(M,F,x)$. 
Notice that it follows from the proof of \Cref{lemma_local_fail} that $I'\subseteq I$. Then for any $i\in I'$ we have that $F'\subseteq \relsupp_{x'}(A'_i) \subseteq \relsupp_x(A'_i)$. As $\cocl_M(\relsupp_x(A'_i)) = F$, then $\cocl_M(F')\subseteq F$ where it follows that $F'\setminus F$ is independent in $M/F$.

\noindent\textbf{\textit{Case 3.}}
Notice that 
\[\DF(M/F) = \ldb F_i\setminus F : F_i\in \DF(M), \, F\subseteq F_i \rdb. 
\]
Then
\[
\cork_{M}\left(\relsupp_x(A_i') \cap \bigcap\limits_{j= 1}^kF_j\right) < |I|+k
\]
implies 
\[
\cork_{M/F}\left((\relsupp_x(A_i')\setminus F) \cap \bigcap\limits_{j= 1}^k(F_j\setminus F)\right) < |I|+k.
\]
But this is a contradiction to the definition of $\phi_{M/F}$, which says that there is a presentation of $M/F$ containing 
\[
\ldb [n] \setminus \relsupp_x(A_i') \mid i \in I\rdb \cup \ldb [n] \setminus F_j \mid j\in[k] \rdb.\qedhere
\]
\end{proof}

\subsection{Further consequences}\label{ssec:path consequences}

A corollary of the above results is the converse of \Cref{prop:FR local}.
\begin{theorem}
\label{coro_transv}
A tropical linear space is in the Stiefel image if and only if all the facets in its dual subdivision are transversal.
\end{theorem}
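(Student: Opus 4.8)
The plan is to derive this directly from the structural machinery already assembled. One implication is \Cref{prop:FR local}: if $V$ is transversal, then every facet-indexing matroid $M\in\MM(V)$ with $P_M$ a facet of $P_V$ is transversal. So the work is entirely in the converse. Assume $V$ has transversal facets (in the sense defined at the start of \Cref{sec:presentations}); I want to produce a presentation of $V$, which by \Cref{thm_B}/\Cref{thm_apices} amounts to showing $\Pi(\Be V)$ is nonempty and that any point of it is a presentation. Concretely, I would take $L=\Be V$, form the distinguished multiset of apices $\dapx(L)=\ldb A_1,\dots,A_d\rdb$ (which is well-defined and has size $d$ by the proposition counting $|\dmat(V)|=d$, using \Cref{lemma_val} and that contraction of cyclic flats preserves transversality), and for each $M\in\dmat(V)$ choose any tuple in the presentation fan $\phi_L(M)$ — nonempty because each such $M$ is transversal, so \Cref{BrualdiDinolt} gives it a presentation whose complements realize the required relative supports. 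This yields a point $(A_1',\dots,A_d')\in\Pi(L)$. Now \Cref{thm_apices} says precisely that $\AAA'=\ldb A_1',\dots,A_d'\rdb$ is a presentation of $L$, i.e.\ $\stiefel$ hits $V$. Hence $V$ is transversal.

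The only gap to close is the translation between the two phrasings of ``all facets transversal.'' The theorem statement speaks of the facets of the dual subdivision of $L$; the hypotheses of \Cref{sec:presentations} speak of the facets of $P_V$ corresponding to transversal matroids. These coincide: the maximal cells of the matroid subdivision induced by $V$ are the polytopes $P_M$ for the connected $M\in\MM(V)$ whose polytope is top-dimensional, and the facets of $P_V$ (those meeting the interior of $\Delta(d,n)$) correspond to cyclic flats $F$ with $M|F\oplus M/F$ the associated matroid, as recalled in \Cref{ssec:valuated}. So I would insert a short remark identifying ``facet in the dual subdivision'' with the matroid $M$ indexing that facet, making ``transversal facet'' unambiguous, and noting that \Cref{prop:FR local} applied at every such facet is exactly the forward direction.

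I expect no serious obstacle here: the theorem is essentially a repackaging of \Cref{thm_apices} together with \Cref{prop:FR local}, and the heavy lifting (the ascendent-path termination argument of \Cref{lemma:path technique}, and \Cref{lemma_local_fail}) has already been done in proving \Cref{thm_apices}. The one point requiring a sentence of care is that $\dmat(V)$ and $\dapx(L)$ are only \emph{defined} under the standing hypothesis that $V$ has transversal facets, so the proof of the converse must invoke that hypothesis before those objects are named; I would state this explicitly at the start. A secondary bookkeeping point is the paper's running assumption that $\underm V$ has no loops or coloops — for loops this is harmless (delete them), and coloops split off as a direct summand, reducing to the coloop-free case; I would dispatch this in one line so the statement is genuinely about arbitrary $V$.

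So the proof is two paragraphs: ``$\Rightarrow$'' is \Cref{prop:FR local} after identifying facets with their matroids; ``$\Leftarrow$'' invokes the nonemptiness of each $\phi_L(M)$ (via \Cref{BrualdiDinolt}, each $M\in\dmat(V)\subseteq\overline{\MM}(V)$ being transversal) to produce a point of $\Pi(L)$, then applies \Cref{thm_apices}.

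\begin{proof}
We first reconcile terminology. The facets of the dual subdivision of $L=\Be V$ are the maximal cells of the matroid subdivision of $P_V$ induced by~$V$; such a cell is $P_M$ for a connected matroid $M\in\MM(V)$, and to say the facet ``is transversal'' means that this $M$ is transversal. With this reading, $V$ has transversal facets in the sense of \Cref{sec:presentations} precisely when all facets of the dual subdivision of $L$ are transversal. (As usual we assume $\underm V$ has no loops; a coloop of $\underm V$ splits off as a rank-one direct summand, both for $V$ and for each initial matroid, reducing the statement to the coloop-free case.)

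If $V$ is transversal, then \Cref{prop:FR local} says every $M\in\MM(V)$ with $P_M$ a facet of $P_V$ is transversal, which is one implication.

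Conversely, suppose $V$ has transversal facets. Then every matroid in $\overline{\MM}(V)$ is transversal, since contraction of a cyclic flat preserves transversality, and so the distinguished multiset of matroids $\dmat(V)$, the distinguished multiset of apices $\dapx(L)=\ldb A_1,\dots,A_d\rdb$, and the presentation space $\Pi(L)$ are all defined. For each $M\in\dmat(V)$ the matroid $M$ is transversal, so by \Cref{BrualdiDinolt} it has a presentation; the complements of its members give a tuple of points in $\Be M$ whose relative supports with respect to~$0$ are independent flats of $M$, i.e.\ an element of the presentation fan $\phi(M)$, and hence an element of $\phi_L(M)=\phi(M)+\vertex LM$. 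Choosing one such tuple for each $M\in\dmat(V)$ produces a point $(A_1',\dots,A_d')\in\prod_{M\in\dmat(V)}\phi_L(M)\subseteq\Pi(L)$, where $A_i'$ lies in the cone with apex $A_i$. By \Cref{thm_apices}, $\AAA'=\ldb A_1',\dots,A_d'\rdb$ is a presentation of~$L$, so $V=\stiefel(A')$ for the matrix $A'$ with rows coordinate vectors for the $A_i'$. Therefore $V$ is transversal.
\end{proof}
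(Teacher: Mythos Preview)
Your approach is correct and is essentially the paper's own: the paper states \Cref{coro_transv} without a written proof, treating it as immediate from \Cref{prop:FR local} (forward direction) and \Cref{thm_apices} (converse), exactly as you do. One small imprecision: when you write ``the complements of its members give a tuple of points in $\Be M$ whose relative supports with respect to~$0$ are independent flats of~$M$, i.e.\ an element of the presentation fan $\phi(M)$,'' note that a presentation of $M$ has $\rk(M)$ members while $\phi(M)$ consists of $t=\tau_M(\emptyset)$-tuples, and only $t$ of those complements are independent flats; the clean fix is simply to observe that the origin repeated $t$ times lies in $\phi(M)$ (since the maximal presentation from \Cref{BrualdiDinolt} contains $t$ copies of the full ground set), so that the distinguished apices $\dapx(L)$ themselves already furnish an element of $\Pi(L)$.
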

Since the class of transversal matroids is closed under
contractions of cyclic sets \cite[Theorem 5.4]{BrualdiMason} and arbitrary deletions,
if $V$ is transversal then so is any initial matroid $V^x$ which has no new coloops.
Thus \Cref{coro_transv} can be sloganized: 
\emph{transversality is a local property of a tropical linear space}.

\begin{corollary}
Let $M$ be a matroid and suppose $P_M$ has a regular subdivision such that all facets in the subdivision are transversal. Then $M$ is transversal.
\end{corollary}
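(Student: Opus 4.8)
The plan is to recognise the hypothesis as the hypothesis of \Cref{coro_transv} transported across the correspondence between matroid subdivisions and valuated matroids, and then read off the conclusion. First I would reduce to the case that $M$ has no loops or coloops; a loop or coloop $e$ may be stripped off because $P_M$ then lies in a coordinate hyperplane ($\{x_e=0\}$, resp.\ $\{x_e=1\}$), so the given subdivision is identified with one of $P_{M\setminus e}$ having the same cells, and removal of a loop or coloop preserves transversality both for $M$ and for every matroid occurring in the subdivision. After this the standing loopless/coloopless conventions of \Cref{sec:background} are in force. Now the hypothesis that every facet of the subdivision is the polytope of a transversal matroid implies in particular that every cell of the subdivision is a matroid polytope --- every cell is a face of a maximal one, and faces of matroid polytopes are matroid polytopes --- so by \cite[Proposition~2.2]{Speyer} the subdivision is the regular subdivision induced by some valuated matroid $V$, and since the subdivided polytope is $P_M$ we get $\underm V=M$.

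The facets of this subdivision are exactly the cells $P_{V^x}$ making up the facets of $P_V=P_M$, so the hypothesis says precisely that $V$ \emph{has transversal facets} in the sense defined at the start of \Cref{sec:presentations}. Thus \Cref{coro_transv} applies and gives that $\Be V$ lies in the Stiefel image, i.e.\ $\Be V=\Be{\stiefel(A)}$ for some tropical matrix $A$. Since $\underm V$ is loopless, $\Be V$ determines $V$, so $V=\stiefel(A)$ is a transversal valuated matroid, whence $M=\underm V$ is a transversal matroid by \Cref{def:transversal}.

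The only step I expect to require genuine care is the translation in the second paragraph --- checking that ``all facets in the subdivision are transversal'' is literally the condition ``$V$ has transversal facets'' invoked by \Cref{coro_transv}, and that having matroid-polytope facets does force the subdivision to be a matroid subdivision (for which the cited form of \cite{Speyer}, that matroid subdivisions are exactly those whose cells are all matroid polytopes, is what is needed). The loop/coloop reduction and the invocation of \Cref{coro_transv} are then routine.
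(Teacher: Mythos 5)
Your proof is correct and follows essentially the same route as the paper: both pass the subdivision hypothesis through \Cref{coro_transv} to get a presentation and then read off transversality of~$M$. The only cosmetic divergence is in the last step --- the paper replaces the finite entries of the presentation matrix $A$ by $0$ to exhibit a $\{0,\infty\}$-presentation of $M$ directly, whereas you invoke injectivity of $\Be$ on loop-free valuated matroids and then cite \Cref{def:transversal} --- and your write-up is more explicit about the preliminary reductions (stripping loops and coloops, and verifying via Speyer's criterion that the subdivision is a matroid subdivision, hence dual to a valuated matroid) that the paper leaves implicit.
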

\begin{proof}
Let $L$ be a tropical linear space dual to such a regular subdivision. 
By \Cref{coro_transv}, $L$ is in the Stiefel image so it has a presentation $A$. 
Consider the matrix $\tilde{A}$ that replaces all finite entries of $A$ by 0. 
Then $\stiefel(\tilde{A})$ is the Bergman fan of $M$, so $M$ is transversal. 
\end{proof}

\begin{figure}[phtb]
	\centering
		\includegraphics[scale = 0.45]{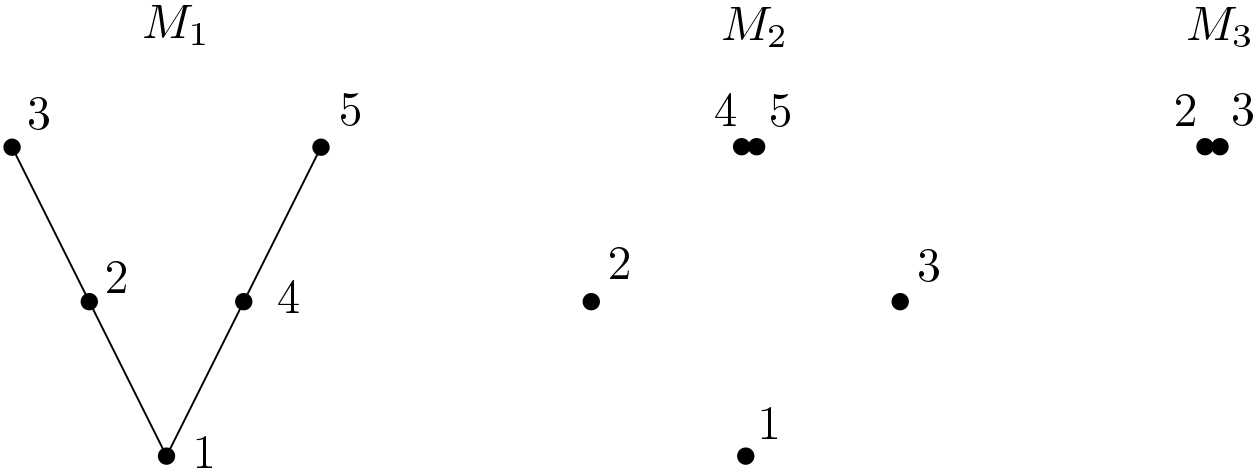}
	\caption{The distinguished matroids of $V$ in \Cref{ex2}.}
	\label{fig_dmat}
\end{figure}
\begin{figure}[phtb]
	\centering
		\includegraphics[scale = 0.5]{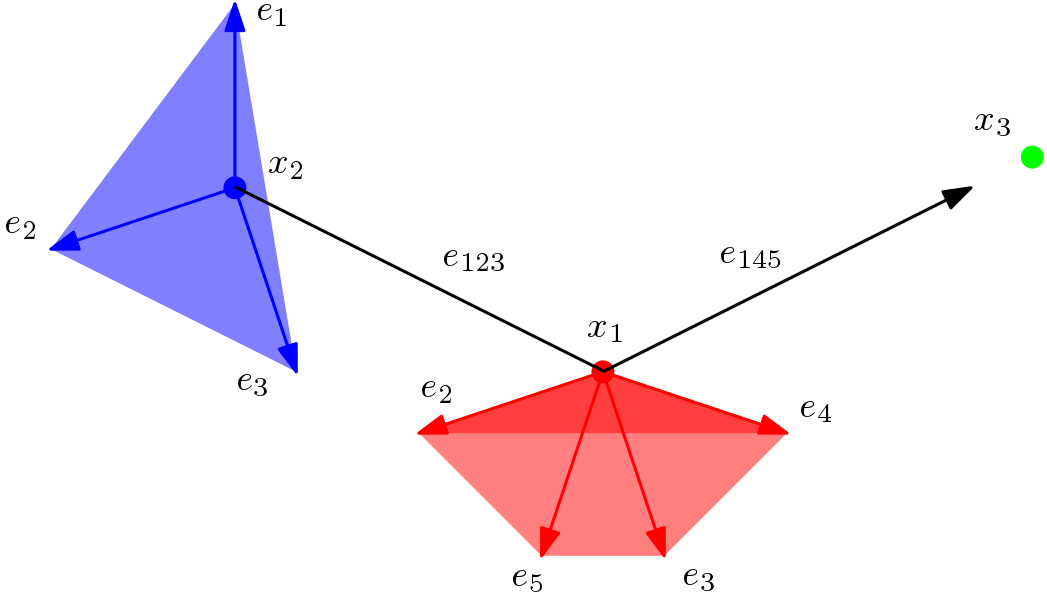}
	\caption{The presentation fan $\phi_{M_i}$ of each of the distinguished matroids $M_i$ in \Cref{ex2},
	as they appear together in~$\mathbb{TP}^{4}$.
	Labels $e_J$ on rays and edges indicate their directions.}
		\label{fig_ex2}
\end{figure}
\begin{example}
\label{ex2}
Let $V$ be the valuated matroid of \Cref{ex:zoom}:
we recall that $V$ was of rank $3$ on 5 elements such that $V_{123}= 1$, $V_{145}= \infty$, and $V_B=0$ for any $B\in\binom{[5]}3$ other than these two. 
The three distinguished matroids $M_1$, $M_2$ and $M_3$ of~$V$ are shown in \Cref{fig_dmat}. 
The respective distinguished apices of $\Be V$ are $x_1 = [0:0:0:0:0]$, $x_2=[1:1:1:0:0]$ and $x_3 = [\infty:0:0:\infty:\infty]$. 
\Cref{fig_ex2} shows the presentation fan of each distinguished matroid: 
the fan from~$x_1$ is the cone over the boundary of a square and
the fan from~$x_2$ is the cone over the boundary of a triangle,
while the fan from~$x_3$ is the single point $x_3$.
So any matrix $A\in \stiefel^{-1}(V)$ must have one row in the red zone, another row in the blue zone and a third row lying exactly at the green point. 
\end{example}

\section{Strict gammoids and stable intersection}\label{sec:stable}
The first appearance of \newword{stable intersection} of tropical varieties
was as the \emph{fan displacement rule} of Fulton and Sturmfels \cite{FultonSturmfels}. 
Speyer \cite[Section 3]{Speyer} described the special case of stable intersection for tropical linear spaces in terms of Pl\"ucker coordinates.
\begin{definition}
Let $V$ and $V'$ be valuated matroids on~$[n]$ of respective ranks $d$ and~$d'$.
Their stable intersection $V\stableint V'$ is the valuated matroid of rank $d+d'-n$ defined by
\[\pl{(V\stableint V')}J = \min\{\pl VB+\pl{V'}{B'} : \textstyle B\in\binom{[n]}d,B'\in\binom{[n]}{d'},B\cap B'=J\}\]
for each $J\in\binom{[n]}{d+d'-n}$,
provided that there exists some $J$ for which the above formula yields $\pl{(V\stableint V')}J<\infty$. 
\end{definition}
In particular, for such a valuated matroid to exist we must have $d+d' \ge n$. 
By comparing this definition to \Cref{rem:stable sum}, we see that stable intersection is dual to stable sum, in the sense that
\[
(V \stableint V')^* = V^*+V'^* \quad\text{ and }\quad (V + V')^* = V^*\stableint V'^*.
\]
The linear space $L(V\stableint V')$ is contained inside $L(V)\cap L(V')$ but in general this containment can be strict (for example, whenever $V= V'$). 

\medskip
In matroid theory, the dual of a transversal matroid is commonly known as a \newword{strict gammoid}.
\begin{definition}
Let $\Gamma =([n], E)$ be a directed graph with vertices $[n]$ and directed edges $E\subset [n]^2$, 
and let $J\subseteq [n]$ be a subset of size $d$. 
A \newword{linking} from a set $B\subseteq [n]$ to~$J$ is a collection of vertex-disjoint directed paths
such that each path starts from a vertex in $N$ and ends in~$J$,
and each vertex of~$B$ is the start of exactly one path. 
\end{definition}
We allow a path to be zero edges long.
\begin{proposition}\label{prop:strict gammoid}
The collection of all sets $B$ of size $d$ such that there is a linking from $B$ to $J$ 
is the set of bases of a matroid. 
A matroid arises this way if and only if it is the dual of a transversal matroid. 
\end{proposition}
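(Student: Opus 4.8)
The plan is to derive both assertions from the theory of transversal matroids already developed in the paper; the result itself is classical, due to Ingleton and Piff. Fix $\Gamma=([n],E)$ and the target $J$ with $|J|=d$, and for each $w\in[n]\setminus J$ put
\[
A_w:=\{v\in[n] : \Gamma \text{ has a directed path from } w \text{ to } v\},
\]
where a zero-edge path is allowed, so $w\in A_w$. Let $N$ be the transversal matroid on $[n]$ presented by the multiset $\ldb A_w : w\in[n]\setminus J\rdb$ of $n-d$ sets; it is a matroid by Edmonds and Fulkerson \cite{EdmondsFulkerson}, and its rank is exactly $n-d$ since $w\mapsto w$ is a system of distinct representatives, so its dual $N^*$ is a matroid of rank $d$. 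The heart of the proof is the equivalence
\[
B \text{ admits a linking to } J \text{ in } \Gamma
\quad\Longleftrightarrow\quad
[n]\setminus B\in\BB(N)
\qquad\Bigl(B\in\textstyle\binom{[n]}d\Bigr).
\]
Granting this, the collection described in the statement is precisely $\BB(N^*)$, which at once shows it is the basis set of a matroid and that this matroid is the dual of a transversal one. (Alternatively, matroidhood of the collection on its own follows from Perfect's induction theorem \cite{Perfect}, since a linking is an induction of the free matroid on $J$, but the route through $N^*$ gives more.)

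Proving the displayed equivalence is the main obstacle, and I would do it by a Menger/flow argument. Since $[n]\setminus B$ and $[n]\setminus J$ both have size $n-d$, the condition $[n]\setminus B\in\BB(N)$ says exactly that the bipartite graph with parts $[n]\setminus B$ and $[n]\setminus J$, with an edge $vw$ whenever $w$ reaches $v$ in $\Gamma$, has a perfect matching; by Hall's (equivalently K\"onig's) theorem this fails precisely when some subset of $[n]\setminus J$ has too small a neighbourhood. On the linking side, split each vertex $v$ of $\Gamma$ into $v^{\mathrm{in}},v^{\mathrm{out}}$ joined by a unit-capacity arc and reroute the arcs of $\Gamma$ through the split vertices; then a linking of $B$ onto $J$ is a system of $d$ vertex-disjoint paths, and Menger's theorem says one exists iff no set of fewer than $d$ vertices separates $B$ from $J$. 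The remaining work is to identify these two obstructions; the bookkeeping that needs care concerns vertices of $B\cap J$ (absorbed by zero-length paths), interior vertices of the routing paths that happen to lie in $B$ or in $J$, and the translation of a minimum vertex separator into a Hall violator. This is exactly the classical content of the Ingleton--Piff theorem, and essentially all the effort is here.

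Finally, for the reverse direction of the second sentence — that every dual of a transversal matroid arises this way — I would run the construction backwards. Given a transversal matroid $N'$ on $[n]$ of rank $n-d$, choose a presentation by $n-d$ sets; since a presentation with $n-d$ sets of a rank-$(n-d)$ matroid has a transversal of size $n-d$, after relabelling we may assume it is indexed by a set $J'\subseteq[n]$ of size $n-d$ with $w\in A'_w$ for each $w\in J'$, and by the standard normalisation of transversal presentations we may further arrange that no element of $A'_w\cap J'$ other than $w$ is forced, so that the arcs $w\to v$ ($v\in A'_w\setminus\{w\}$, $w\in J'$) produce a digraph $\Gamma'$ in which the reachability set of each $w\in J'$ is exactly $A'_w$. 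Taking the target to be $[n]\setminus J'$, the first construction applied to $(\Gamma',[n]\setminus J')$ reproduces the multiset $\ldb A'_w\rdb$, so by the equivalence above the linkable sets of $(\Gamma',[n]\setminus J')$ are the bases of $(N')^*$, exhibiting $(N')^*$ as a strict gammoid. The only subtle point here is the normalisation lemma for transversal presentations, which is again classical.
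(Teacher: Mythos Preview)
The paper does not actually prove this proposition; it attributes the first sentence to Mason and the second to Ingleton and Piff and moves on. (The valuated analogue, \Cref{prop:dual}, is proved, and its unweighted specialization is a proof of the present statement.) So your write-up is being compared to that background argument rather than to anything in the paper proper.

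Your proof has a genuine error: the sets $A_w$ must be the \emph{out-neighbourhoods} $\{w\}\cup\{v:(w,v)\in E\}$, not the full reachability sets. With reachability the displayed equivalence is false. Take $n=4$, $J=\{3,4\}$, and edges $1\to 2$, $2\to 3$, $2\to 4$. Your $A_1=\{1,2,3,4\}$ and $A_2=\{2,3,4\}$ give $N=U_{2,4}$ and $N^*=U_{2,4}$, so $\{1,2\}$ would be linkable; but any path from $1$ to $J$ uses vertex $2$, so no vertex-disjoint linking of $\{1,2\}$ onto $\{3,4\}$ exists. The correct sets $A'_1=\{1,2\}$, $A'_2=\{2,3,4\}$ give a transversal matroid whose dual excludes $\{1,2\}$, as required.

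With the corrected $A_w$ the Menger/Hall detour is unnecessary: the equivalence is a direct bijection. Given a linking, send each $w\in[n]\setminus J$ to its successor on the path containing it, or to itself if no path contains it; this is an injection into $[n]\setminus B$ with $\sigma(w)\in A_w$. Conversely, a system of distinct representatives $\sigma$ defines a functional digraph on $[n]$ whose non-cycle components are vertex-disjoint paths from $B$ to $J$. (This is exactly the unweighted reading of the paper's proof of \Cref{prop:dual}, where $A_{i,j}$ is the weight of the single edge $(i,j)$.) The same correction also repairs your converse direction: once $A_w$ is the out-neighbourhood, drawing arcs $w\to v$ for $v\in A'_w\setminus\{w\}$ recovers the presentation verbatim, and no ``normalisation lemma'' about reachability is needed.
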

The first sentence of \Cref{prop:strict gammoid} is due to Mason~\cite{MasonGammoids}, the second to Ingleton and Piff~\cite{IngletonPiff}.

Our work provides a valuated version of strict gammoids. 
We now describe these in terms of weighted directed graphs, akin to the graphs Speyer and Williams use to parametrize the tropical positive Grassmannian \cite{SpeyerWilliams}.
Consider a weighted directed graph $\Gamma = ([n], E)$ with vertices $[n]$ and where $E$ is now a weight function $E: [n]^2 \to \TT$ which is $0$ on the diagonal. The directed edges of the graph are exactly the pairs where $E$ takes finite value. Let $J\subseteq [n]$ be a subset of size $d$. 
Given a linking from a set $B$ to~$J$, the \newword{weight} of that linking is the sum of the weights of all of the edges used in that linking.
\begin{proposition}
\label{prop:dual}
Let $\Gamma$ be a weighted directed graph with no negative cycles. Let $V\in \TT\PP^{\binom{n}{d} -1}$ be the vector such that for every subset $B\in {[n]\choose d}$, $V_B$ is the minimum weight among all linkings from $B$ to $J$. Then $V$ is a valuated matroid.
Moreover, a valuated matroid arises this way if and only if it is the dual of a transversal valuated matroid.
\end{proposition}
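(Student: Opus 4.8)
The strategy is to obtain \Cref{prop:dual} as the dual, under the operation $V \mapsto V^*$ on valuated matroids, of the already-developed theory of presentations of transversal valuated matroids, via the Lindstr\"om–Gessel–Viennot-type correspondence between linkings in a weighted directed graph and systems of distinct representatives. First I would set up the dictionary. Given a weighted directed graph $\Gamma = ([n],E)$ with $E$ zero on the diagonal and no negative cycles, and a fixed $d$-set $J$, I build a $d\times n$ tropical matrix $A^\Gamma$ indexed by $J \times [n]$ whose entry $A^\Gamma_{a,b}$ ($a\in J$, $b\in[n]$) is the minimum weight of a directed path from $b$ to $a$ in $\Gamma$ (here a zero-edge path contributes weight $0$, so $A^\Gamma_{a,a}=0$). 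The absence of negative cycles guarantees these minima are well defined in $\TT$ and satisfy the triangle-type inequalities $A^\Gamma_{a,c} \le A^\Gamma_{a,b} + E_{b,c}$, i.e.\ $A^\Gamma$ is "tropically metric" along paths. I would then show that, for $B\in\binom{[n]}{d}$, the quantity $\stiefel(A^\Gamma)_B = \min\{\sum_{i} A^\Gamma_{a_i, b_i} : \{b_i\} = B,\ \{a_i\}=J\}$ equals the minimum weight of a linking from $B$ to $J$. The $\le$ direction is immediate: concatenate the individual minimum-weight paths; vertex-disjointness is not needed for the inequality because extra shared vertices only cost nonnegative weight once negative cycles are excluded. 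For the $\ge$ direction, given any optimal collection of paths realizing the left side, a standard uncrossing argument (swap tails at a shared vertex) converts it into a vertex-disjoint linking of no greater weight, using again that $\Gamma$ has no negative cycles so that removing the cycle created by a swap does not increase the weight. Hence $V := \stiefel(A^\Gamma)$ is exactly the vector in the statement; by definition of $\stiefel$ it is a valuated matroid, giving the first assertion.

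For the "moreover" part I would argue both inclusions. If $V$ arises from some $\Gamma$ as above, then $V = \stiefel(A^\Gamma)$ is a transversal valuated matroid in the sense of \Cref{def:transversal}, so $V^*$ is by definition the dual of a transversal valuated matroid; but also $V = (V^*)^*$, so $V$ itself is the dual of the transversal valuated matroid $V^*$. Wait—this needs care: the claim is that $V$ is the dual of a transversal valuated matroid, equivalently that $V^*$ is transversal. Since $V = \stiefel(A^\Gamma)$ is transversal and $V = (V^*)^*$, we get that $V^*$ is the dual of a transversal valuated matroid, which is precisely the definition of $V$ being a "valuated strict gammoid" in the informal terminology; to match the proposition's phrasing we note $V^* $ being transversal is equivalent to $V$ being dual-to-transversal, and these are interchangeable. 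Concretely: the graphs $\Gamma$ producing $V$ are in bijection (up to the identifications already present for $\stiefel$) with the presentations of the transversal valuated matroid $V^*$, under $A \leftrightarrow A^\Gamma$. Conversely, suppose $W$ is a transversal valuated matroid of rank $n-d$, so $W = \stiefel(A)$ for some $(n-d)\times n$ matrix $A$; I must produce $\Gamma$ with $\stiefel(A^\Gamma) = W^*$. By \Cref{lem:containment} each row of $A$ (after projectivizing and choosing supportive coordinates) lies in $\Be{W}$, and using \Cref{prop_zoom_pres} / \Cref{thm_B} I may normalize the presentation to a particularly convenient form — in the classical case this is the "pointed" normalization where $A$ contains an identity submatrix indexed by $[n]\setminus J$ for a suitable basis complement $J$; the valuated version of this reduction is exactly \Cref{prop_zoom_pres} applied at a distinguished apex, which lets one assume $A$ has $\infty$-free structure compatible with a fixed $J$. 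From such a normalized $A$ one reads off edge weights $E_{b,c}$ of a graph on $[n]$ (roughly $E_{b,c} = A_{r(c), b}$ where $r(c)$ indexes the row whose "pivot" is $c$, with appropriate conventions for $c\in J$), and checks that the minimum-path-weight matrix of this $\Gamma$ reproduces $A$ up to row scaling and hence that $\stiefel(A^\Gamma)$ equals $\stiefel(A)$ dualized correctly. The no-negative-cycle condition on $\Gamma$ corresponds to the finiteness/well-definedness built into the tropical minors of $A$.

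The main obstacle I anticipate is the bookkeeping in the converse direction: producing the graph $\Gamma$ from a transversal presentation and verifying that the resulting minimum-weight linkings recover precisely the dual Pl\"ucker vector, rather than merely the correct underlying matroid. In the unvaluated case this is the classical Ingleton–Piff theorem (the second sentence of \Cref{prop:strict gammoid}), and the valuated refinement must track weights through the uncrossing argument and through the normalization of the presentation. The cleanest route is probably to prove the matrix identity "$\stiefel$ of the minimum-path matrix of $\Gamma$ equals the stable intersection of the tropical hyperplanes read off from $\Gamma$'s columns" — i.e.\ to fold this proposition together with the equivalence (1)$\Leftrightarrow$(2) of \Cref{thm:gammoids} — so that duality between stable sum and stable intersection (recorded just above the statement, $(V+V')^* = V^*\stableint V'^*$) does the heavy lifting and the graph is only used as a combinatorial encoding of a presentation of $V^*$. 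Everything else — that $\stiefel(A^\Gamma)$ is a valuated matroid, that every such arises from a transversal $V^*$ — is then formal from \Cref{def:transversal} and \Cref{thm_B}.
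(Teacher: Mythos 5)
Your construction of the $d\times n$ shortest‑path matrix $A^\Gamma$ with rows indexed by $J$, together with the claim $\stiefel(A^\Gamma)=V$, is incorrect, and the error is essential: it would make every valuated strict gammoid \emph{transversal} (being $\stiefel$ of a $d\times n$ matrix), whereas the proposition asserts only that $V$ is the \emph{dual} of a transversal valuated matroid. These two classes genuinely differ, already at the level of underlying matroids.

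The step that fails is the ``$\ge$'' direction of your claimed equality $\stiefel(A^\Gamma)_B=\min\{\text{weight of a linking from }B\text{ to }J\}$. The uncrossing argument you invoke is the proof of the Lindstr\"om--Gessel--Viennot lemma, but that lemma relies on sign cancellation over a ring; over $(\TT,\min,+)$ the non-vertex-disjoint terms do not cancel, and they can attain a strictly smaller minimum than any vertex-disjoint linking. Concretely, take $n=6$, $J=\{2,4\}$, and $\Gamma$ with edges $1\to1,\,1\to2,\,3\to3,\,3\to4,\,5\to5,\,5\to6$, and $6\to j$ for every $j\in[6]$, all of weight $0$. For $B=\{5,6\}$, every path out of $5$ passes through $6\in B$, so there is \emph{no} linking from $B$ to $J$ and $V_B=\infty$. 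But the shortest-path matrix has $A^\Gamma_{2,5}=A^\Gamma_{4,6}=0$ (via $5\to6\to2$ and $6\to4$), so $\stiefel(A^\Gamma)_B=0\neq V_B$. Swapping tails at the shared vertex $6$ does not repair this: $6$ is a \emph{source} of one path and an \emph{intermediate} vertex of the other, a situation that the uncrossing step cannot resolve. (This example is exactly the strict gammoid dual to the snowflake matroid of the paper's \Cref{ex:no transversal}.)

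The paper avoids this by never forming a shortest-path matrix. Instead it sets up an $(n-d)\times n$ matrix $A$ with rows indexed by $I=[n]\setminus J$ and entries the \emph{single-edge} weights $A_{i,j}=E(i,j)$, and shows that a matching realizing the tropical minor $\stiefel(A)_{[n]\setminus B}$ is precisely a linking from $B$ to $J$ together with zero or more cycles supported on $I\setminus B$; the no-negative-cycle hypothesis is used only to discard those cycles without increasing weight. Thus $V=\stiefel(A)^*$ directly. For the converse, given $W=\stiefel(A)$, one chooses a basis $I$ of $\underm W$ and a minimal matching $\sigma$ onto it, then normalizes the rows of $A$ so that the $\sigma$-diagonal is $0$; the minimality of $\sigma$ rules out negative cycles, and the graph read off from the rows of the normalized matrix does the job. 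Your idea of regarding graphs presenting $V$ as transversal presentations of $V^*$ is the right one (and is indeed how the paper concludes), but the bridge to it must be the single-edge matrix, not the path matrix, because the two have different tropical determinants.
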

We call any such valuated matroid a \emph{valuated strict gammoid}.
\begin{proof}
Consider $A\in \TT^{(n-d)\times n}$ to be the matrix where the rows are indexed by $I = [n]\setminus J$ and $A_{i,j}$ is the weight of the edge from $i$ to $j$. In particular, $A_{i,i}$ is $0$ for every $i\in I$.  
Let $B\in {[n]\choose d}$ and consider the tropical minor of $A$ corresponding to the columns $[n]\setminus B$. 
A matching from those columns to the rows corresponds to picking edges such that every vertex in $[n]\setminus B$ has exactly one edge coming in and all vertices in $I$ have exactly one edge coming out. 
Taken together this is exactly a linking from $B$ to $J$ plus possibly some cycles in $I\setminus B$. The value of the term of that matching in the corresponding tropical minor
is equal to the weight of the linking plus the weights of the cycles. However, as there are no negative cycles, removing the cycles (choosing the matching where for every vertex $i$ in a cycle is matched with itself instead) the value of the corresponding term can only decrease. So the corresponding minor is equal to the minimum weight of a matching for $B$ to $J$, that is, $V_B$. 
This shows $V$ is exactly the dual of~$\stiefel(A)$.

Now if $V$ is dual to a transversal valuated matroid $\stiefel(A)$ with $A\in \TT^{(n-d)\times n}$, to construct the corresponding weighted graph $\Gamma$, let $I$ be any basis of $\underm{\stiefel(A)}$ and let $\sigma: [n-d] \to I$ be a matching that achieves the minimum of $\stiefel(A)_I$. Let $\Gamma$ be the weighted directed graph where for every $(i,j)\in I \times[n]$ there is an edge from $i$ to $j$ with weight $A_{\sigma^{-1}(i),j}-A_{\sigma^{-1}(i),i}$. 
As $\sigma$ achieves the minimum among matchings $[n-d] \to I$ there cannot be any negative cycles in $\Gamma$. 
So when the matrix $A'$ is constructed from $\Gamma$ as described above, 
then $A'$ is obtained from $A$ by subtracting $A_{\sigma^{-1}(i),\sigma(i)}$ 
from each entry of the row $\sigma^{-1}(i)$. 
In particular $\stiefel(A') = \stiefel(A)$, so $V$ is the valuated matroid associated to $\Gamma$.
\end{proof}

As a corollary from \Cref{coro_transv} and \Cref{prop:dual} we get the following.

\begin{theorem}
\label{thm:gammoids}
Let $V$ be a valuated matroid. Then the following are equivalent:
\begin{enumerate}
	\item $V$ is a valuated strict gammoid.
	\item $\Be V$ is the stable intersection of tropical hyperplanes.
	\item Every connected matroid in $\MM(V)$ is a strict gammoid.
\end{enumerate}
\end{theorem}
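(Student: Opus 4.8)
The plan is to derive \Cref{thm:gammoids} by dualizing \Cref{coro_transv} through the dictionary of \Cref{prop:dual}. By \Cref{prop:dual}, statement (1) — that $V$ is a valuated strict gammoid — is literally equivalent to the statement that $V^*$ is a transversal valuated matroid. So the whole theorem reduces to proving that statements (2) and (3) are each equivalent to the transversality of $V^*$. Throughout I would use the paper's standing hypothesis that $V$ has no loops or coloops; this is inherited by $V^*$, since loops of $V^*$ are coloops of $V$ and conversely.

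For the equivalence of (2) with transversality of $V^*$, first I would note that a tropical hyperplane is $\Be{W}$ for a valuated matroid $W$ of rank $n-1$, and that the stable intersection of $k$ tropical hyperplanes has rank $n-k$ (each contributes corank one); hence writing $\Be{V}$ as a stable intersection of tropical hyperplanes forces there to be exactly $k = n - \rk V$ of them. Dualizing via the identity $(W_1 \stableint \cdots \stableint W_k)^* = W_1^* + \cdots + W_k^*$ recorded after the definition of stable intersection, this exhibits $V^*$ as a stable sum of $n - \rk V$ rank-one valuated matroids, which by \Cref{rem:stable sum} and \Cref{def:presentation} is exactly a transversal presentation of $V^*$. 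Thus (2) holds iff $V^*$ is transversal. When it does, applying \Cref{thm_B} to $V^*$ describes $\stiefel^{-1}(V^*)$, and dualizing each rank-one summand back to a tropical hyperplane and composing with \Cref{prop:dual} translates this into a description of all tuples of tropical hyperplanes with stable intersection $\Be{V}$ and of all weighted directed graphs presenting $V$, which is the ``Furthermore'' clause.

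For the equivalence of (3) with transversality of $V^*$, the key point is the identity $(V^x)^* = (V^*)^{-x}$ for all $x\in\RR^n$, which I would verify directly from the definitions of dual and initial matroid by matching a basis $B$ of $V^x$ with the complementary basis $[n]\setminus B$ of $(V^*)^{-x}$. Consequently $M\mapsto M^*$ is a bijection between the initial matroids of $V$ and those of $V^*$; it preserves connectedness, and since neither $V$ nor $V^*$ has loops it restricts to a bijection between the connected matroids in $\MM(V)$ and those in $\MM(V^*)$, under which ``$M$ is a strict gammoid'' corresponds to ``$M^*$ is transversal.'' Hence (3) for $V$ asserts precisely that every connected matroid in $\MM(V^*)$ is transversal, and it remains to identify this with the transversality of $V^*$: the forward direction uses that a connected initial matroid on at least two elements has no coloops, hence no new coloops, so is transversal by the consequence of \Cref{coro_transv} noted in the remark following it; the converse is \Cref{coro_transv} itself, together with the closure of transversal matroids under direct sums, deletions, and contractions of cyclic sets, which propagates transversality from the connected initial matroids to the (coloopless) facet matroids of the dual subdivision.

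I expect the main obstacle to be exactly this last bridge: \Cref{coro_transv} is phrased in terms of facets of the dual subdivision, whereas condition (3) is phrased in terms of the connected initial matroids (the apices of $\Be{V^*}$), and some care is needed — using that the class of transversal matroids, equivalently of strict gammoids, is closed under direct sums and under deletion and contraction of cyclic sets — to see that these two conditions coincide. A secondary point requiring attention is the rank bookkeeping in (2): the number of tropical hyperplanes is not a free parameter but is forced to equal $n - \rk V$, and one must check that the identity $(V^x)^*=(V^*)^{-x}$ really does make $\MM(V^*)$ the set of duals of the members of $\MM(V)$ under the no-loops-no-coloops hypothesis.
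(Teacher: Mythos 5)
Your proof is correct and matches the paper's route, which simply declares the theorem a corollary of \Cref{coro_transv} and \Cref{prop:dual}; you supply the details faithfully, including the identity $(V^x)^* = (V^*)^{-x}$ and the rank count forcing $n-\rk V$ hyperplanes. Your closing worry is unnecessary, though: with $\underm V$ connected (as the paper tacitly assumes), the facets of the dual subdivision of $P_{V^*}$ are exactly the polytopes of the connected matroids in $\MM(V^*)$, so condition (3) is verbatim the hypothesis of \Cref{coro_transv} for $V^*$ and needs no appeal to closure under direct sums or minors.
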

Furthermore, \Cref{thm_B} explicitly describes the space of all $d$-tuples of tropical hyperplanes whose stable intersection is $\Be V$ and, 
through \Cref{prop:dual}, all possible weighted directed graphs $\Gamma$ representing $V$ as a valuated strict gammoid.

\begin{example}
Recall the snowflake tropical linear space $L = \Be V$ from \Cref{ex:no transversal}. 
As we said, $V$ is not a transversal valuated matroid; however, its dual is. Indeed, the following are all the connected matroids in $\MM(V^*)$:
\begin{align*}
\BB(M_1) &= \binom{[6]}{4}  \setminus \{1234,1256,3456\} \quad & \vertex L{M_1}=: x_1 = [0:0:0:0:0:0]\\
\BB(M_2) &= \left\{B \in \binom{[6]}{4} : 56 \not\subset B \right\} \quad & \vertex L{M_2}=: x_2 = [1:1:1:1:0:0]\\
\BB(M_3) &= \left\{B \in \binom{[6]}{4} : 34 \not\subset B \right\} \quad & \vertex L{M_3}=: x_3 = [1:1:0:0:1:1]\\
\BB(M_4) &= \left\{B \in \binom{[6]}{4} : 12 \not\subset B \right\} \quad & \vertex L{M_4}=: x_4 = [0:0:1:1:1:1]
\end{align*}
All of these are transversal. We have that 
\[\tau_{M_1}(\emptyset) =\tau_{M_2}(\emptyset) =\tau_{M_3}(\emptyset) =\tau_{M_4}(\emptyset) =1,\] 
so
\[\dmat(V^*) = \ldb M_1, M_2, M_3, M_4\rdb \text{ and } \dapx(\Be {V^*}) = \ldb x_1,x_2, x_3, x_4\rdb.\]
The presentation fan of $M_i$ is 3-dimensional for each $i$. 
For $J\in \binom{[6]}{3}$, let $[0,\infty]^J \subseteq \TT\PP^5$ be the closed cone containing the points $x$ such that
$x_j \in [0,\infty]$ for $j\in J$ and $x_j = 0$ for $j\notin J$. The presentation fans are:
\begin{align*}
\phi(M_1) &= \bigcup_{J\in \binom{[6]}{3} \enspace J\not \subset \{1234\} \atop J\not \subset \{1256\} \enspace J\not \subset \{3456\} } [0,\infty]^J &\phi(M_2) = \bigcup_{J\in \binom{\{1234\}}{3}} [0,\infty]^J \\
\phi(M_3) &= \bigcup_{J\in \binom{\{1256\}}{3}} [0,\infty]^J &\phi(M_4) = \bigcup_{J\in \binom{\{3456\}}{3}} [0,\infty]^J 
\end{align*}
So any presentation $\present =\ldb A_1,A_2,A_3,A_4 \rdb$ of $V^*$ is of the form $A_i = x_i +a_i$ with $a_i\in \phi(M_i)$. 
Thus the snowflake $L$ is the stable intersection of the four tropical hyperplanes $H_i$ with apex $A_i$ for any such presentation. For example, the rows of the matrix
\[
A = \begin{pmatrix}
	0 & \infty & 0 & \infty & 0 & \infty \\
	\infty & \infty & \infty & 1 & 0 & 0 \\
	\infty & \infty & 0 & 0  & 1  & \infty \\
	0 & 0 & 1 & \infty  & \infty  & \infty 
\end{pmatrix}
\] 
form a presentation of $V^*$. From this presentation, together with the matching $\sigma(1) = 1$, $\sigma(2) = 5$. $\sigma(3) = 3$ and $\sigma(4) = 2$ (as in the proof of \Cref{prop:dual}), we obtain the weighted directed graph from \Cref{fig:gammoid} representing $V$.

\begin{figure}
	\centering
		\includegraphics[width = 64mm]{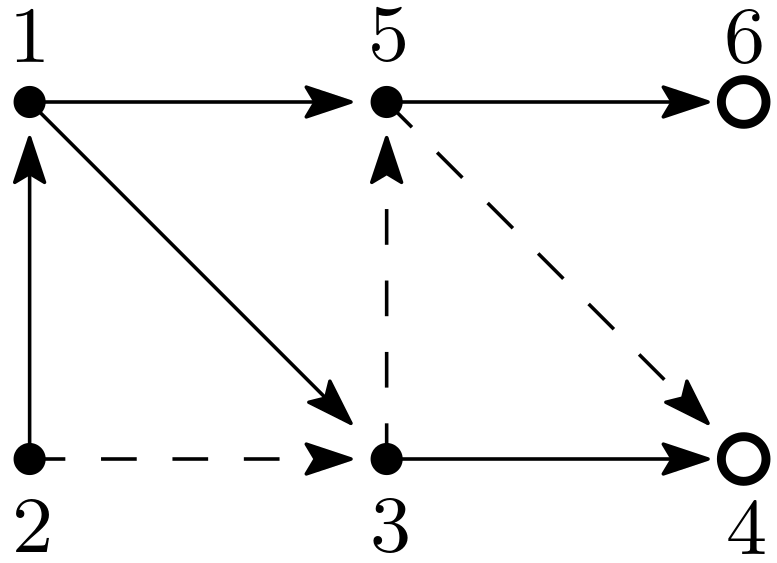}
	\caption{A weighted directed graph representing the snowflake as a valuated strict gammoid. The sinks are $4$ and $6$, the dashed arrows are of weight 1 and all other arrows are of weight 0.}
	\label{fig:gammoid}
\end{figure}

\end{example} 

Notice that given a valuated strict gammoid $V$, collections of tropical hyperplanes whose stable intersection is $\Be V$ together with a matching $\sigma$ are in bijection with weighted directed graph representations of $V$. 

\section{Other connections}
%

\subsection{Gammoids and maps}
Among matroids, the class of \newword{gammoids} is the minor-closure of either of the classes of valuated matroids or strict gammoids.
So a class of valuated gammoids could be defined either as contractions of the transversal valuated matroids that are our main subject
or as restrictions of the valuated strict gammoids of \Cref{sec:stable}.
Valuated gammoids are exactly the images of morphisms from free matroids in the sense of Frenk \cite[\S4.2]{Frenk},
whose results are essentially a tropical formulation of earlier results from \cite{KobayashiMurota,Mitrofanov,Murota-bimatroids}.

\subsection{Tropical convexity}
As explained in Section~\ref{sec:intro}, the tropical Stiefel map is one tropical counterpart
of the map from a matrix to its rowspace.
A different counterpart is the set of all $\TT$-linear combinations of a set of tropical vectors.
This is known as the \newword{tropical cone}.
If the coefficients in the $\TT$-linear combination are further restricted to sum to~$0$ (the multiplicative identity element),
we get the \newword{tropical convex hull}.
Tropical cones and convex hulls have been intensely studied from many points of view
\cite{ArdilaDevelin,DS,JoswigLoho,AGG,GK,Sergeev,Butkovich}.

Tropical cones are usually not tropical linear spaces at all:
\cite[Theorem 16]{YuYuster} describes when they are.
However, tropical linear spaces are tropically convex \cite[Theorem~7]{DS}.
\Cref{lem:containment} implies the following.
\begin{corollary}[{\cite[Theorem~6.3]{FR}}]\label{cor:convex hull contained}
The Stiefel tropical linear space $\Be{\stiefel (A)}$
contains the tropical cone $\TT^d\cdot A$.
\end{corollary}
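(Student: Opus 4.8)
The plan is to deduce \Cref{cor:convex hull contained} from \Cref{lem:containment} together with the tropical convexity of tropical linear spaces. Write $V=\stiefel(A)$ and let $A_1,\dots,A_d$ be the rows of~$A$. Since $A$ lies in the domain of~$\stiefel$ and $d\ge1$, no row of~$A$ is the all-$\infty$ vector (cf.\ \Cref{rem:domain}), so each $A_i$ is a point of~$\TP$. By \Cref{def:presentation}, $\ldb A_1,\dots,A_d\rdb$ is a presentation of~$V$, so \Cref{lem:containment} gives $A_i\in\Be V$ for every~$i$.

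Now a point of the tropical cone $\TT^d\cdot A$ is a vector $q=\bigoplus_{i=1}^d\lambda_i\odot A_i$ with $\lambda\in\TT^d$, whose $j$-th coordinate is $\min_i(\lambda_i+A_{i,j})$. If all $\lambda_i=\infty$ this is the all-$\infty$ vector, which is not a point of~$\TP$; otherwise at least one $\lambda_i$ is finite, and subtracting $c:=\min_i\lambda_i\in\RR$ from every $\lambda_i$ does not change the class of~$q$ in~$\TP$ while forcing $\bigoplus_i\lambda_i=0$. Hence the image of $\TT^d\cdot A$ in~$\TP$ is precisely the tropical convex hull of $A_1,\dots,A_d$, and since $\Be V$ is tropically convex \cite[Theorem~7]{DS} and contains each~$A_i$, it contains this hull. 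This is the assertion.

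The argument is short and I do not foresee any real obstacle; the one point to handle carefully is that the statement compares a subset $\TT^d\cdot A$ of~$\TT^n$ with the subset $\Be V$ of~$\TP$, so one passes to projective classes and notes that projectivization identifies the tropical cone on the rows of~$A$ with their tropical convex hull, which is where \cite[Theorem~7]{DS} applies. Should one wish to avoid citing \cite{DS}, the containment can also be proved directly by the matching-swap argument from the proof of \Cref{lem:containment}: given $q=\bigoplus_i\lambda_i\odot A_i$ and $C\in\binom{[n]}{d+1}$, pick $k_0\in C$ minimizing $q_{k_0}+\pl V{C\setminus\{k_0\}}$, let $i_0$ attain $\min_i(\lambda_i+A_{i,k_0})$ and let $\mu_0\colon[d]\to C\setminus\{k_0\}$ be a bijection attaining the minor $\pl V{C\setminus\{k_0\}}$; then interchanging the roles of~$k_0$ and~$\mu_0(i_0)$ exhibits a second element of~$C$ achieving the same minimum, which is the tropical linear-space relation for~$q$ at~$C$.
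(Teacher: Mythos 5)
Your main argument is exactly the route the paper takes: Lemma~\ref{lem:containment} places each row of~$A$ in~$\Be{\stiefel(A)}$, projectivization identifies the tropical cone on the rows with their tropical convex hull, and tropical convexity of tropical linear spaces (\cite[Theorem~7]{DS}) finishes the job. The self-contained matching-swap argument you sketch at the end is a correct bonus — it is indeed the same swap as in the proof of Lemma~\ref{lem:containment} applied to the combined point $q$ rather than to a single row — but it does not change the essential approach.
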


Thus, the tropical Stiefel map provides a bridge between these two objects, by giving a tropical linear space containing a given tropical cone
(\Cref{cor:convex hull contained}).
If the tropical cone is $r$-dimensional and defined by $r+1$ points,
then the tropical Stiefel map provides an $r$-dimensional tropical linear space,
which is smallest possible.


Every bounded cell of $\Be{\stiefel (A)}$ is contained in the tropical cone $\TT^d\cdot A$ \cite[Theorem~6.8]{FR}.
More generally, $\TT^d\cdot A$ contains the cells of $\Be{\stiefel (A)}$ dual to coloop-free matroids, 
which is exactly the bounded part of $\Be{\stiefel (A)}$ if $\underm{V} = U_{d,n}$.

\subsection{Principal bundles}
The Stiefel map was given the name ``Stiefel'' to reflect the fact that
the space of tropical matrices maps to the space of valuated matroids
just as the \newword{non-compact Stiefel manifold} of $d\times n$ matrices of rank~$d$ 
maps to the Grassmannian of $d$-planes in $n$-space.

\Cref{thm_B} mirrors the classical fact that the non-compact Stiefel manifold
is a principal $\mathrm{GL}_d$ bundle over the Grassmannian, as we now explain.
The only invertible matrices of tropical numbers are the generalized permutation matrices,
those which have exactly one finite entry in every row and column, forming a group isomorphic to~$\mathbb R\wr S_d$.
\Cref{thm_B} implies that the space of $d\times n$ tropical matrices without too many infinities (\Cref{rem:domain})
has a deformation retract onto the Minkowski sum of the set of apices and the lineality space,
which is a ramified $\mathbb R\wr S_d$ bundle over its image.
The ramification arises because an apex can have equal rows.

It remains an open question to describe the topology of the image of the tropical Stiefel map.
The above bundle perspective suggests a possible approach.

\bibliography{Stiefel_presentations}
\bibliographystyle{plain}

\end{document}